\theoremstyle{plain}
\newtheorem{theorem}{Theorem}[section]
\newtheorem{proposition}[theorem]{Proposition}
\newtheorem{corollary}[theorem]{Corollary}
\newtheorem{lemma}[theorem]{Lemma}
\newtheorem{claim}{Claim}
\theoremstyle{definition}
\newtheorem{definition}[theorem]{Definition}
\newtheorem{remark}[theorem]{Remark}
\def\Z{\mathbb{Z}}
\def\Q{\mathbb{Q}}
\def\R{\mathbb{R}}
\def\C{\mathbb{C}}
\def\sR{\mathcal{R}}
\def\M{\mathcal{M}}
\def\T{\mathcal{T}}
\def\hF{\widehat{F}}
\def\CL{\mathcal{C}^\mathrm{L}}
\def\CSL{\mathcal{C}^\mathrm{SL}}
\def\hCSL{\widehat{\mathcal{C}}^\mathrm{\,SL}}
\def\FL{\mathcal{F}^\mathrm{L}}
\def\FSL{\mathcal{F}^\mathrm{SL}}
\def\hFSL{\widehat{\mathcal{F}}^\mathrm{SL}}
\def\B{\mathcal{B}}
\def\BFSL{\mathcal{BF}^\mathrm{SL}}
\def\Coker{\operatorname{Coker}}
\def\Im{\operatorname{Im}}
\def\Hom{\operatorname{Hom}}
\def\Ext{\operatorname{Ext}}
\def\sign{\operatorname{sign}}
\def\rank{\operatorname{rank}}
\def\dis{\operatorname{dis}}
\def\to{\mathchoice{\longrightarrow}{\rightarrow}{\rightarrow}{\rightarrow}}
\newcommand{\shortxra}[2][]{\ext@arrow 0359\rightarrowfill@{#1}{#2}}
\def\longrightarrowfill@{\arrowfill@\relbar\relbar\longrightarrow}
\newcommand{\longxra}[2][]{\ext@arrow 0359\longrightarrowfill@{#1}{#2}}
\renewcommand{\xrightarrow}[2][]{\mathchoice{\longxra[#1]{#2}}%
  {\shortxra[#1]{#2}}{\shortxra[#1]{#2}}{\shortxra[#1]{#2}}}
\def\Nopagebreak{\@nobreaktrue\nopagebreak}
\begin{document}

\title%
[Structure of the string link concordance group]%
{Structure of the string link concordance group and Hirzebruch-type
  invariants}

\author{Jae Choon Cha}

\address{Department of Mathematics, Pohang University of Science and Technology\\
  Pohang, Kyungbuk 790--784, Republic of Korea}

\email{jccha@postech.ac.kr}

\def\subjclassname{\textup{2000} Mathematics Subject Classification}
\expandafter\let\csname subjclassname@1991\endcsname=\subjclassname
\expandafter\let\csname subjclassname@2000\endcsname=\subjclassname
\subjclass{%
57M25, 
57M27, 
57N70. 
}

\keywords{Hirzebruch-type invariant, Link, String link, Concordance,
  Cochran-Orr-Teichner filtration}

\begin{abstract}
  We employ Hirzebruch-type invariants obtained from iterated
  $p$-covers to investigate concordance of links and string links.  We
  show that the invariants naturally give various group homomorphisms
  of the string link concordance group into $L$-groups over number
  fields.  We also obtain homomorphisms of successive quotients of the
  Cochran-Orr-Teichner filtration.  As an application we show that the
  kernel of Harvey's $\rho_n$-invariant is large enough to contain a
  subgroup with infinite rank abelianization, modulo local knots.  As
  another application, we show that recently discovered nontrivial
  2-torsion examples of iterated Bing doubles lying at an arbitrary
  depth of the Cochran-Orr-Teichner filtration are independent over
  $\Z_2$ as links, in an appropriate sense.  We also construct similar
  examples of infinite order links which are independent over~$\Z$.
\end{abstract}

\maketitle

%


\section{Introduction and main results}

In~\cite{Cha:2007-1}, the author defined new Hirzebruch-type
intersection form defect invariants from towers of iterated
$p$-covers, and gave applications to link concordance and homology
cobordism.  The aim of this paper is to employ the invariants to
reveal further information on the structure of the string link
concordance group and the set of link concordance classes.  In this
paper manifolds are assumed to be topological and oriented, and
submanifolds are locally flat.  All results hold in the smooth
category as well.

\subsection{Hirzebruch-type invariants}

We start with a quick review of the invariants defined
in~\cite{Cha:2007-1}.  Throughout this paper, $p$ denotes a prime.  For
a CW-complex $X$, a~tower
\[
X_n \to \cdots \to X_1 \to X_0=X
\]
consisting of connected covers $X_i$ of $X$ is called a
\emph{$p$-tower of height $n$ for $X$} if each $X_i\to X_{i-1}$ is a
regular cover whose covering transformation group is a finite abelian
$p$-group.  Suppose $M$ is a closed 3-manifold.  For a $p$-tower
$\{M_i\}$ of height $n$ and a character $\phi\colon \pi_1(M_n) \to
\Z_d$ with $d$ a power of $p$ such that $(M_n,\phi)=0$ in the
topological bordism group $\Omega^{top}_3(B\Z_d)$, an invariant
\[
\lambda(M_n,\phi) \in L^0(\Q(\zeta_d))
\]
is defined, where $\zeta_d=\exp(2\pi\sqrt{-1}/d)$ is the $d$th
primitive root of unity and $L^0(\Q(\zeta_d))$ is the group of Witt
classes of nonsingular hermitian forms over $\Q(\zeta_d)$.  (As usual,
$\Q(\zeta_d)$ is endowed with the involution
$\bar\zeta^{\vphantom{-1}}_d=\zeta_d^{-1}$.)  Basically
$\lambda(M_n,\phi)$ is the difference of the Witt class of the
$\Q(\zeta_d)$-coefficient intersection form of a 4-manifold $W$
bounded by $M_n$ over $\Z_d$ and that of the untwisted intersection
form of~$W$.  (For more details the reader is referred
to~\cite{Cha:2007-1}.)

We remark that in order to define the invariant it suffices to specify
$\phi$ as a map of a subgroup of $\pi_1(M)$, without considering the
$p$-tower~$\{M_i\}$: we call $\phi\colon H \to \Z_d$ a ($\Z_d$-valued)
\emph{$p$-virtual character} of a group $G$ if $\phi$ is a group
homomorphism of a subgroup $H$ in $G$ such that the index $[G:H]$ is
finite and a power of~$p$.  For any $p$-virtual character $\phi$ of
$\pi_1(M)$, it can be seen by an easy induction that there is a
$p$-tower $\{M_i\}$ of $M$ whose highest term $M_n$ is the cover
$M_\phi$ determined by $H$, i.e., the image of $\pi_1(M_\phi)$ under
the map induced by the covering projection is exactly $H \subset
\pi_1(M)$.  The invariant $\lambda(M_\phi,\phi)$ is independent of the
choice of~$\{M_i\}$ (provided it is defined).  Since in this paper we
always define a $p$-virtual character by constructing a $p$-tower, we
regard a $p$-virtual character of $\pi_1(M)$ as endowed with a
$p$-tower of $M$.  In other words, we think of a pair $(\{M_i\},\phi)$
of a $p$-tower $\{M_i\}$ and $\phi\colon \pi_1(M_n) \to \Z_d$.  We
call such a pair a \emph{$\Z_d$-valued $p$-structure of height $n$}
for~$M$.  (The order $d$ is always assumed to be a power of~$p$.)

\subsection{Invariants of the string link concordance group}

In \cite{Cha:2007-1}, it was shown that the Hirzebruch-type invariant
defined from a $p$-structure of the surgery manifold of a link in
$S^3$ gives an obstruction to being a slice link.  In this paper we
also think of \emph{string links} in the sense of Habegger and
Lin~\cite{Habegger-Lin:1990-1,Habegger-Lin:1998-1}, which has the
advantage that there is a well-defined group structure on the set of
concordance classes; we denote the group of concordance classes of
$m$-component string links by $\CSL(m)$, or simply~$\CSL$.

The invariants in \cite{Cha:2007-1} give rise to invariants of $\CSL$
with values in $L^0(\Q(\zeta_d))$ via the closures of string links.
As the first main result of this paper, we show that the invariants
restricted to a large class of string links become \emph{group
  homomorphisms}; we call a string link $\beta$ an \emph{$\widehat
  F$-string link} if the closure of $\beta$ is an $\widehat F$-link in
the sense of Levine~\cite{Levine:1989-1}.  Roughly speaking, it means
that the fundamental group of the complement of the closure of $\beta$
has Levine's algebraic closure~\cite{Levine:1989-1} identical with
that of a free group (and the preferred longitudes are trivial in the
algebraic closure).  We note that $\widehat F$-(string) links form the
largest known class of (string) links with vanishing Milnor's
$\bar\mu$-invariant; it is a big open problem in link theory whether
all (string) links with vanishing Milnor's $\bar\mu$-invariants are
$\hF$-(string) links.

Let $\hCSL=\hCSL(m)$ be the subgroup of $\CSL$ consisting of (the
classes of) $\hF$-string links.  An advantage of $\hF$-links is that
$p$-structures of the surgery manifolds are naturally described in
terms of those of a fixed space $X=\bigvee^m S^1$, the wedge of $m$
circles.  Namely, in Section~\ref{section:invariant-of-string-links},
we show that every $p$-structure $\T = (\{X_i\},\theta)$ of height $n$
for $X$ \emph{canonically} determines a $p$-structure $(\{M_i\},\phi)$
of height $n$ for the surgery manifold $M$ of the closure of an
$\hF$-string link~$\beta$, and any $p$-structure for $M$ arises in
this way.  We define $\lambda_\T(\beta) \in L^0(\Q(\zeta_d))$ to be
the Hirzebruch-type invariant $\lambda(M_n,\phi)$.  The result stated
below essentially says that $\lambda_\T(\beta)$ is additive under
product of $\hF$-string links.

\begin{theorem}
  \label{theorem:lambda_T-homomorophism}
  For any $\Z_d$-valued $p$-structure $\T=(\{X_i\},\theta)$ for $X$,
  the invariant $\lambda_\T(\beta)$ is well-defined for any
  $\hF$-string link, and this gives rise to a group homomorphism
  \[
  \lambda_\T\colon \hCSL \to L^0(\Q(\zeta_d)).
  \]
\end{theorem}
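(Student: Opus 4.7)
The plan is to prove both claims — that $\lambda_\T$ is well-defined on concordance classes and that it is a homomorphism — by constructing suitable $4$-dimensional bordisms over which the structure $\T$ extends, and then invoking additivity of intersection forms. Recall from the preceding section that a $p$-structure $\T$ on $X = \bigvee^m S^1$ determines a $p$-structure on the surgery manifold $M = M(\hat\beta)$ by pulling back along a canonical map $f_\beta \colon M \to X$ provided by the $\hF$-structure on $\beta$. Thus the task throughout will be to extend $f_\beta$ over appropriate cobordisms and to control the resulting intersection forms.

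Well-definedness splits into two parts. First, the bordism obstruction $(M_n,\phi)\in \Omega_3^{top}(B\Z_d)$ must vanish; this I would handle by observing that the $\hF$-structure provides a canonical nullbordism $W_0$ of $M$ mapping to $X$ (obtained by capping off the natural Seifert-type surfaces), and pulling back the $p$-tower along $W_0 \to X$ yields the required nullbordism of $M_n$. Second, suppose $\beta_0$ and $\beta_1$ are concordant $\hF$-string links via a concordance $C \subset D^2 \times I \times I$. Performing $0$-framed surgery on $C$ inside $S^3\times I$ produces a $4$-manifold $V$ with $\partial V = M(\hat\beta_0) \sqcup -M(\hat\beta_1)$. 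The $\hF$-hypothesis on both ends permits the canonical maps to $X$ to extend to a single map $F\colon V \to X$, and pulling back $\T$ gives a bordism between the two induced $p$-structures; topological $B\Z_d$-bordism invariance of the Hirzebruch-type invariant then forces $\lambda_\T(\beta_0) = \lambda_\T(\beta_1)$.

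For additivity, let $\beta = \beta_1 \cdot \beta_2$; this product is again an $\hF$-string link. I will construct a pair-of-pants cobordism $W^4$ with $\partial W = M(\hat\beta) \sqcup -M(\hat\beta_1) \sqcup -M(\hat\beta_2)$ from $D^2 \times I \times P$, where $P$ is a thrice-punctured $2$-sphere, by embedding the evident tangle cobordism among $\beta_1$, $\beta_2$, and $\beta$ — which the stacking decomposition furnishes for free — and performing $0$-framed surgery on the resulting surface. The three boundary solid tori become the three surgery manifolds, and the three canonical maps to $X$ amalgamate to a single map $F\colon W \to X$. Novikov additivity then yields
\[
\lambda_\T(\beta) - \lambda_\T(\beta_1) - \lambda_\T(\beta_2) \;=\; \bigl[\,\text{twisted minus untwisted intersection form of } W\,\bigr] \;\in\; L^0(\Q(\zeta_d)),
\]
and both forms on $W$ admit a half-rank isotropic subspace arising from the product structure of $D^2 \times I \times P$, so the right-hand side vanishes.

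The main obstacle is the extension of $f_\beta$ over the cobordisms $W_0$, $V$, and $W$ in a way that is compatible with the $p$-tower $\{X_i\}$ — that is, extending not just to $X$ but to a map of $p$-towers. The $\hF$-hypothesis is crucial here: it ensures that the algebraic closure of the link group matches that of the free group on $m$ generators and that the preferred longitudes vanish in this closure, so no $\bar\mu$-type obstruction prevents the extension. Once the extension is available, the remaining intersection-form bookkeeping is routine, and the vanishing in $L^0(\Q(\zeta_d))$ follows from the geometric half-dimensional splitting that the pair-of-pants picture makes transparent.
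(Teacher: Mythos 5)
Your overall skeleton (well-definedness, concordance invariance, additivity via a cobordism among the three surgery manifolds whose form defect vanishes) matches the paper's, and your pair-of-pants $W$ is essentially the paper's ``standard'' cobordism from $M\cup M'$ to $N$. But two of your load-bearing steps are not available as stated. First, the canonical map goes the wrong way: the $\hF$-hypothesis gives the preferred meridian map $\mu\colon X\to M_\beta$ inducing an isomorphism of \emph{algebraic closures} $\widehat{\pi_1(X)}\to\widehat{\pi_1(E_L)}$; it does not give a retraction $f_\beta\colon M_\beta\to X$, so there is no map $F\colon W\to X$ (or $V\to X$, or $W_0\to X$) along which to pull back $\T$. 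What the paper does instead is use that $\mu$ is a $p$-tower map (Lemma~\ref{lemma:p-tower-map-of-surgery-mfd}), and to propagate the tower over the cobordism it maps $M$, $M'$, $N$, and $W$ 2-connectedly into a common $K(G,1)$-space $Z$ built from the algebraic-closure machinery (Lemma~\ref{lemma:algebraic-closure-and-p-tower-map}); this is the content of Lemma~\ref{lemma:p-tower-bordism}, and your remark that ``the $\hF$-hypothesis ensures no $\bar\mu$-type obstruction'' does not substitute for that construction. Relatedly, ``capping off the natural Seifert-type surfaces'' presumes disjoint Seifert surfaces, i.e.\ a boundary link; $\hF$-links need not be boundary links (the paper notes $\BFSL_{(1)}\subsetneq\hFSL_{(1)}$). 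The paper's actual well-definedness argument is homological: because the tower on $M_\beta$ is pulled back from the $1$-complex $X_n$, $H_1(M_n)$ is $p$-torsion free, so $\phi$ factors through $\Z$ and $(M_n,\phi)$ dies in $\Omega_3^{top}(B\Z_d)$ since $\Omega_3^{top}(B\Z)=0$.

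Second, $\lambda$ is \emph{not} a $B\Z_d$-bordism invariant — it is only defined for nullbordant pairs, and across any bordism $V$ the difference of invariants equals the defect $[\lambda_{\Q(\zeta_d)}(V)]-[\lambda_\Q(V)]$, which must itself be shown to vanish. So your concordance-invariance step is unsupported as written; the paper imports vanishing on string links concordant to the trivial one from \cite[Theorem~6.2]{Cha:2007-1} and gets the homomorphism property from that plus Theorem~\ref{theorem:additivity-of-lambda_T}. For additivity the same issue is the heart of the matter: one must kill the Witt classes of the twisted and untwisted forms of the iterated cover $W_n$ (and its further $\Z_d$-cover), not of $W$ itself. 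The paper does this by proving $H_2((M\cup M')_k;\Z_{(p)})\to H_2(W_k;\Z_{(p)})$ is surjective for every level $k$ of the tower (Lemma~\ref{lemma:triviality-of-intersection-form}), via a Mayer--Vietoris argument whose key input — injectivity of $H_1(Y_k)\to H_1((M\vee M')_k)$ — again requires the algebraic-closure comparisons and Levine's lemma on $p$-covers (Lemma~\ref{lemma:algebraic-closure-of-p-cover}). Your ``half-rank isotropic subspace arising from the product structure of $D^2\times I\times P$'' speaks at best to the untwisted form on the base $W$ and does not engage the covers or the $\Q(\zeta_d)$-coefficients, which is exactly where the statement lives; this gap, together with the nonexistent maps to $X$, is what you would need to repair.
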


We remark that $\CSL$ and $\hCSL$ contain the knot concordance group
as a summand; $\CSL(1)=\hCSL(1)$ is isomorphic to the knot
concordance group and there is an obvious split injection
$\bigoplus^m\CSL(1) \to \hCSL(m) \subset \CSL(m)$ whose image is the
(normal) subgroup generated by ``local knots''.  (A precise definition
of a local knot is given in Section~\ref{section:local-knots}.)  In
order to study the sophistication peculiar to links, modulo knot
concordance, we consider the quotient of the string link concordance
group modulo local knots.  A subclass of our invariants can be used to
investigate this.  Let $x_i$ be the loop in $X=\bigvee^m S^1$
representing the $i$th circle.  We say that a $p$-structure
$\T=(\{X_i\},\theta)$ of height $n$ for $X$ is \emph{locally trivial}
if any element in $\pi_1(X_n)$ which projects to a conjugate of a
power of $[x_i]$ in $\pi_1(X)$ is in the kernel of~$\theta$.

\theoremstyle{plain}
\newtheorem*{tempthm}
{Addendum to Theorem~\ref{theorem:lambda_T-homomorophism}}

\begin{tempthm}
  If $\T$ is locally trivial, then $\lambda_\T$ induces a group
  homomorphism
  \[
  \lambda_\T\colon \frac{\hCSL}{\langle \text{local knots} \rangle} \to
  L^0(\Q(\zeta_d))
  \]
  where $\langle\text{local knots}\rangle$ denotes the subgroup
  generated by local knots.
\end{tempthm}

In fact our results on $\hF$-string links also hold for a
(potentially) larger class of string links, namely
\emph{$\Z_{(p)}$-coefficient $\hF$-string links} in the sense
of~\cite{Cha:2007-1}, where $\Z_{(p)}$ designates the localization of
$\Z$ away from~$p$; the definition of a $\smash{\Z_{(p)}}$-coefficient
$\hF$-string link is identical with that of a $\hF$-string link except
that the algebraic closure with respect to
$\smash{\Z_{(p)}}$-coefficients (see~\cite{Cha:2004-1}) is used in
place of Levine's algebraic closure.  Since there is a natural
transformation from Levine's algebraic closure to the algebraic
closure with $\Z_{(p)}$-coefficients, an $\hF$-(string) link is a
$\Z_{(p)}$-coefficient $\hF$-(string) link.  In all results in this
paper, the term ``$\hF$-(string) link'' can be understood as this
$\Z_{(p)}$-coefficient analogue.

\subsection{Structure of Cochran-Orr-Teichner filtration}

We use the homomorphism $\lambda_\T$ to investigate the structure of
the Cochran-Orr-Teichner filtration
\[
\cdots \subset \FSL_{(n.5)} \subset \FSL_{(n)} \subset \cdots \subset
\FSL_{(1)} \subset \FSL_{(0.5)} \subset \FSL_{(0)} \subset \CSL
\]
of the string link concordance group
\cite{Cochran-Orr-Teichner:1999-1,Harvey:2006-1}.  Here $\FSL_{(h)}$
($h\in \frac12 \Z_{\ge 0}$) denotes the subgroup (of concordance
classes) of string links whose closures are $(h)$-solvable in the
sense of~\cite{Cochran-Orr-Teichner:1999-1}.  Harvey proved an
interesting result on this filtration that $\FSL_{(n)}/\FSL_{(n.5)}$
is highly nontrivial~\cite{Harvey:2006-1}.  More precisely, she
considered the subgroup $\B(m)$ of (concordance classes of) boundary
string links in $\CSL$, and the induced filtration
$\{\BFSL_{(h)}=\FSL_{(h)} \cap \B(m)\}$ of $\B(m)$.  She defined a
real-valued group homomorphism
\[
\rho_n\colon \frac{\BFSL_{(n)}}{\BFSL_{(n.5)}\cdot\langle\text{local
    knots}\rangle} \to \R,
\]
and using it, proved that the abelianization of
$\smash{\BFSL_{(n)}/(\BFSL_{(n.5)}}\cdot\langle\text{local
  knots}\rangle)$ has infinite rank for any $m>1$ and~$n$.  (In the
original statement in~\cite{Harvey:2006-1}, $\smash{\BFSL_{(n+1)}}$
was used in place of $\smash{\BFSL_{(n.5)}}$; it was generalized to
the above form in a subsequent work of Cochran and
Harvey~\cite{Cochran-Harvey:2006-01}.)

Our invariant reveals further information on the filtration.  Let
$\hFSL_{(h)}=\FSL_{(h)} \cap \hCSL$.  We remark that $\BFSL_{(h)}$ is
a subgroup of $\hFSL_{(h)}$ since a boundary string link is an
$\hF$-string link.  It is known that there are $\hF$-(string) links
which are not concordant to any boundary (string)
link~\cite{Cochran-Orr:1993-1}; the examples
in~\cite{Cochran-Orr:1993-1} illustrate that $\BFSL_{(1)}$ is a proper
subgroup of $\hFSL_{(1)}$ for $m>1$.

The following result may be viewed as a refinement of
Theorem~\ref{theorem:lambda_T-homomorophism}:

\begin{theorem}
  \label{theorem:COT-filtration-and-lambda_T}
  For any $p$-structure $\T=(\{X_i\},\theta)$ of height $n$,
  $\lambda_\T$ gives rise to a homomorphism
  \[
  \hFSL_{(n)}/\hFSL_{(n.5)} \subset \hCSL/\hFSL_{(n.5)} \to L^0(\Q(\zeta_d)).
  \]
  In addition, if $\T$ is locally trivial, then $\lambda_\T$ induces a
  homomorphism
  \[
  \frac{\hFSL_{(n)}}{\hFSL_{(n.5)}\cdot\langle\text{local
      knots}\rangle} \subset
  \frac{\hCSL}{\hFSL_{(n.5)}\cdot\langle\text{local knots}\rangle} \to
  L^0(\Q(\zeta_d)).
  \]
\end{theorem}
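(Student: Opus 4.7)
The plan is to build on Theorem~\ref{theorem:lambda_T-homomorophism} and its addendum. The addendum already shows that local knots lie in $\ker(\lambda_\T)$ when $\T$ is locally trivial, so once the first assertion of the theorem is established, the second one follows automatically: the subgroup generated by two subgroups that both sit inside a common kernel still sits inside that kernel. The main work is therefore to prove the single additional vanishing $\hFSL_{(n.5)}\subset\ker(\lambda_\T)$.

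I would start with an $(n.5)$-solvable $\hF$-string link $\beta$ whose closure has zero-surgery manifold $M$, bounded by an $(n.5)$-solution $W$. The $p$-structure $\T$ on $X=\bigvee^m S^1$ canonically determines a $p$-structure $(\{M_i\},\phi)$ of height $n$ for $M$, with character $\phi\colon\pi_1(M_n)\to\Z_d$. Because $\pi_1(M_n)$ is obtained from $\pi_1(M)$ by $n$ successive regular covers with finite abelian $p$-group deck groups, the composition of $\phi$ with the inclusion $\pi_1(M_n)\hookrightarrow\pi_1(M)$ factors through a solvable quotient of $\pi_1(M)$ of derived length at most $n+1$ whose successive abelian sections are $p$-groups. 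The key step is to extend $\{M_i\}$ to a $p$-tower $\{W_i\}$ over $W$ with $\partial W_n=M_n$, and to extend $\phi$ to a character $\widetilde\phi\colon\pi_1(W_n)\to\Z_d$. Here is where $(n.5)$-solvability enters: it guarantees that the relevant derived-quotient maps of $\pi_1(M)\to\pi_1(W)$ lift compatibly with $\phi$.

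Given such an extension, $\lambda_\T(\beta)=\lambda(M_n,\phi)$ is, by definition, the difference in $L^0(\Q(\zeta_d))$ of the Witt classes of the $\widetilde\phi$-twisted and untwisted intersection forms on $W_n$. The $(n.5)$-solution furnishes a Lagrangian in $H_2(W;\Q)$ whose generating surfaces have fundamental groups lying in $\pi_1(W)^{(n+1)}$. Lifting it to $W_n$ yields a half-rank direct summand of $H_2(W_n;\Q(\zeta_d))$ that is Lagrangian for \emph{both} the untwisted form and the twisted form, because $\widetilde\phi$ vanishes on the commutator depth producing these classes. Two hermitian forms sharing a common Lagrangian represent the same Witt class, so their difference is zero and $\lambda_\T(\beta)=0$.

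The main obstacle will be the extension step: matching a height-$n$ $p$-tower against the derived-subgroup data built into an $(n.5)$-solution. This is the finite-cover analog of the Cochran-Orr-Teichner $L^{(2)}$-signature vanishing argument, and requires either arranging the $p$-tower so as to refine the $p$-local derived filtration of $\pi_1(W)$, or conversely replacing $\phi$ by a derived-quotient representative, before one can run the Lagrangian-implies-hyperbolic Witt-group argument on the top cover $W_n$. Once the compatibility is in place, the remaining computation is a routine manipulation of hermitian Witt classes over $\Q(\zeta_d)$.
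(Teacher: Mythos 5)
Your overall outline is the same as the paper's (reduce to showing $\hFSL_{(n.5)}\subset\ker\lambda_\T$, lift the $(n.5)$-solution data to the top of the tower, exhibit an isotropic half-rank subspace, and handle local knots by combining kernels), but the two steps you defer or declare routine are exactly where the paper's work lies, and as written they are genuine gaps. First, the extension step: you say $(n.5)$-solvability ``guarantees that the relevant derived-quotient maps lift,'' but you never explain how a height-$n$ $p$-tower of $M$ is carried over $W$. The paper does this by the Covering Solution Theorem~\ref{theorem:covering-solution}: at each stage the character into the abelian $p$-group factors through $H_1$ modulo prime-to-$p$ torsion and hence extends because $W_i$ remains a $\Z_{(p)}$-coefficient $H_1$-bordism, and the solution data descends because $\pi_1(W)^{(n)}\subset\pi_1(W_\Gamma)^{(n-1)}$ for an abelian cover; proving that $W_\Gamma$ is again an $(h-1)$-solution (in particular that $H_1(M_\Gamma;\Z_{(p)})\to H_1(W_\Gamma;\Z_{(p)})$ stays an isomorphism) requires the Betti-number inequality $\beta_2(W_\Gamma)\le|\Gamma|\beta_2(W)$ and the hypothesis that $H_1$ of the covers of $M$ is $p$-torsion free, which in turn comes from the $\hF$-condition (proof of Lemma~\ref{lemma:well-definedness-of-lambda_T}). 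None of this is supplied by your appeal to ``derived-quotient representatives.''

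Second, and more seriously, your central assertion that the lifted classes form ``a half-rank direct summand of $H_2(W_n;\Q(\zeta_d))$'' is not justified: your reason (that $\widetilde\phi$ vanishes at the relevant commutator depth) only gives isotropy, not rank. Betti numbers of finite covers do not simply track those of the base, and the half-rank statement is false in general without extra hypotheses; this is precisely why Theorem~\ref{theorem:n.5-solvaility-obstruction} carries the conditions that $H_1(M_i)$ be $p$-torsion free and $\beta_1((M_n)_{\Gamma_j})-1=|\Gamma_j|(\beta_1(M_n)-1)$, which are verified for $\hF$-links by an Euler characteristic computation on the covers of $\bigvee^m S^1$. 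The paper's proof of the base case (Theorem~\ref{theorem:0.5-solvability-obstruction}) establishes half-dimensionality of the lifted isotropic subspace inside the nonsingular part $A(\Q(\omega_i))$ via a $p$-group analogue of the Cochran--Harvey rank inequality, an Euler characteristic count, and an induction over the intermediate cyclotomic fields $\Q(\omega_j)$, $j\le a$ (Claims 1 and 2 there); also note the isotropic classes must be taken from $H_2(W;\Z_{(p)}[\pi/\pi^{(n+1)}])$, not merely their images in $H_2(W;\Q)$, for the twisted vanishing to survive the covers. Until you supply arguments of this kind, the ``routine Witt-class manipulation'' at the end has nothing to act on. (Your reduction of the second assertion to the first via the Addendum and Theorem~\ref{theorem:vanishing-for-local-knots} is fine and matches the paper.)
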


Using this theorem we show that for any $m>1$ and $n$, the
abelianization of $\hFSL_{(n)}/(\hFSL_{(n.5)}\cdot\langle\text{local
  knots}\rangle)$ is of infinite rank (see
Theorem~\ref{theorem:Z-independence-of-string-links}).  As an
immediate corollary of (the proof of) this, we obtain an alternative
proof of Harvey's result that the abelianization of
$\BFSL_{(n)}/(\BFSL_{(n.5)}\cdot\langle\text{local knots}\rangle)$ has
infinite rank.  Moreover, we prove that the kernel of Harvey's
homomorphism $\rho_n$ is large:

\begin{theorem}
  \label{theorem:kernel-of-harvey-invariant}
  For any $m>1$ and $n$, the abelianization of the kernel of $\rho_n$
  on $\BFSL_{(n)}/(\BFSL_{(n.5)}\cdot\langle\text{local
    knots}\rangle)$ has infinite rank.
\end{theorem}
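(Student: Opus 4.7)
The plan is to exploit the structural difference between the two invariants: $\rho_n$ evaluates an \emph{integral} of the Levine--Tristram signature function of an infecting knot, whereas $\lambda_\T$ evaluates a \emph{single value} of that function at the root of unity $\zeta_d$ selected by $\T$. Infection knots can be chosen so that the integral vanishes while many individual point values survive, producing a large family of string links that lie in $\ker\rho_n$ but whose independence is detected by varying~$\T$.

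Concretely, I would start from the infection-based construction used in the proof of Theorem~\ref{theorem:Z-independence-of-string-links}, which produces boundary string links $L \in \BFSL_{(n)}$ by iteratively infecting a fixed $(n)$-solvable seed boundary string link $E$ along curves in deep commutator subgroups of its complement, with auxiliary knots $J_i$ playing the role of free parameters. By Harvey's formula, $\rho_n(L)$ is a linear combination $\sum c_i \int_{S^1}\sigma_{J_i}(\omega)\,d\omega$ with nonzero constants $c_i$, while the standard computation of Hirzebruch-type invariants on infections (as in~\cite{Cha:2007-1}) expresses $\lambda_\T(L)-\lambda_\T(E)$ as a sum of Witt classes in $L^0(\Q(\zeta_d))$ built from the point values $\sigma_{J_i}(\zeta_d)$, with $\zeta_d$ the $p$-power root of unity determined by $\T$. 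For each $k$ I would choose a $p$-power $d_k$ and an auxiliary knot $J_k$ (for example a suitable algebraic combination of $(2,p^e)$-torus knots) satisfying (a)~$\int_{S^1}\sigma_{J_k}(\omega)\,d\omega = 0$, and (b)~$\sigma_{J_k}(\zeta_{d_k}) \ne 0$ while $\sigma_{J_k}(\zeta_{d_j}) = 0$ for all $j < k$. The Levine--Tristram signature function of $T_{2,p^e}$ has explicit jumps at $p$-power roots of unity, so finite algebraic combinations afford enough freedom to realise (a) and (b) simultaneously.

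Now let $L_k \in \BFSL_{(n)}$ be the boundary string link obtained by the infection construction with $J_k$ as infection data, and let $\T_k$ be the $p$-structure selecting the root $\zeta_{d_k}$. Condition~(a) gives $\rho_n(L_k) = 0$, so each $L_k$ lies in the kernel of $\rho_n$ on $\BFSL_{(n)}/(\BFSL_{(n.5)}\cdot\langle\text{local knots}\rangle)$. By Theorem~\ref{theorem:COT-filtration-and-lambda_T} and condition~(b), the matrix $\bigl[\lambda_{\T_j}(L_k)\bigr]_{j,k}$ is upper triangular with nonzero diagonal entries in the corresponding $L^0(\Q(\zeta_{d_j}))$, so the $L_k$ are $\Z$-linearly independent in the abelianization of $\BFSL_{(n)}/(\BFSL_{(n.5)}\cdot\langle\text{local knots}\rangle)$, establishing the theorem.

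The main obstacle is realising (a) and (b) simultaneously: one must prescribe vanishing of the integrated Levine--Tristram signature together with an upper-triangular pattern of point values at specified $p$-power roots of unity. The restriction to $p$-power roots limits which torus knots are usable, and one must further arrange the covering data in $\T_k$ to be compatible with the commutator depth of the infecting curve carrying $J_k$, so that $\lambda_{\T_k}$ really isolates the single point value $\sigma_{J_k}(\zeta_{d_k})$ rather than a mixture that could obscure~(b). Once this bookkeeping is done, the linear algebra behind the triangular detection argument is straightforward.
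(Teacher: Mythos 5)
Your proposal follows essentially the same route as the paper's own proof: the paper constructs boundary string links $\beta(K_i)$ by a single infection of the trivial string link along a curve in $\pi_1(X)^{(n)}$ using knots with vanishing signature integral (hence $\rho_n=0$), vanishing Arf invariant (hence $(n)$-solvability), and a triangular pattern of Levine--Tristram signatures at $p$-power roots of unity (Lemmas~\ref{lemma:knots-with-independent-signatures} and~\ref{lemma:infection-curve-and-tower}), which are exactly your conditions (a) and (b), detected by the homomorphisms $\lambda_\T$ of Theorem~\ref{theorem:COT-filtration-and-lambda_T}. The bookkeeping you flag is resolved in the paper just as you anticipate: the infection curve and a locally trivial $p$-structure are built so that only $\zeta_d^{\pm1}$ appears in the infection formula, the knots are realized via bump signature functions and cabling rather than torus knots, and one composes $\lambda_\T$ with the signature, since the off-diagonal entries of your matrix need only have vanishing signature rather than vanish in $L^0(\Q(\zeta_{d_j}))$.
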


In the proof of Theorem~\ref{theorem:kernel-of-harvey-invariant} we
construct concrete examples of (boundary) string links which have
vanishing $\rho_n$-invariants but are linearly independent (over $\Z$)
in the abelianization.  We remark that recently Cochran, Harvey, and
Leidy \cite{Cochran-Harvey-Leidy:2007-1} have announced a (different)
proof of the following statement, which is an immediate consequence of
Theorem~\ref{theorem:kernel-of-harvey-invariant}: viewing $\rho_n$ as
a homomorphism on $\BFSL_{(n)}/\BFSL_{(n.5)}$, its kernel is
infinitely generated (without taking the abelianization and without
taking the quotient by local knots).



As a part of the proof
Theorem~\ref{theorem:COT-filtration-and-lambda_T}, we investigate the
relationship between the solvability of a 3-manifold and its
(iterated) covers.  A related result we call \emph{Covering Solution
  Theorem} seems interesting by its own.  Roughly speaking, it says:
\emph{taking an abelian $p$-cover, the solvability decreases by one.}
(We need some technical assumptions; for precise statements, refer to
Definition~\ref{definition:Zp-coefficient-solution} and
Theorem~\ref{theorem:covering-solution}.)  Further applications of
Covering Solution Theorem will be discussed in later papers.

\subsection{Disk basings and independence of links}

We also investigate the structure of the
Cochran-Orr-Teichner filtration of (spherical) links.  Let $\CL(m)$ be
the set of concordance classes of $m$-component links in~$S^3$, and
let
\[
\cdots \subset \FL_{(n.5)} \subset \FL_{(n)} \subset \cdots \subset
\FL_{(1)} \subset \FL_{(0.5)} \subset \FL_{(0)} \subset \CL(m)
\]
be the Cochran-Orr-Teichner filtration of $\CL(m)$, namely,
$\FL_{(h)}$ is the collection (of concordance classes) of
$(h)$-solvable links in the sense of
\cite{Cochran-Orr-Teichner:1999-1}.

In contrast to $\CSL(m)$, $\CL(m)$ does not have a natural group
structure for $m>1$.  One can think of a connected sum of two links,
but in general it does not give a unique (concordance class of a)
link.  To define a connected sum one may pass through string links; by
choosing a ``disk basing'' of a link $L$ in the sense of
\cite{Habegger-Lin:1990-1}, one obtains a string link $\beta$ whose
closure is~$L$.  Here a disk basing is an embedded 2-disk in $S^3$
which meets components of $L$ transversely at prescribed positive
intersection points, and $\beta$ is obtained by cutting $S^3$ along
the 2-disk.  Given two links $L_1$ and $L_2$, a connected sum of $L_1$
and $L_2$ is defined to be the closure of the product of two string
links obtained from $L_1$ and $L_2$ by choosing disk basings.
Similarly we define a connected sum of finitely many links by choosing
disk basings and an order of the links.

Regarding the role of disk basings, we think of a strong notion of
``independence'' of links as below.  For a link $L$ and $a\in\Z$, we
denote by $aL$ $a$ copies of $L$ if $a\ge 0$, and $|a|$ copies of $-L$
for $a<0$, where $-L$ designates the inverse of~$L$.  A family
$\{L_i\}_{i\in I}$ of links indexed by a set $I$ is called
\emph{independent over $\Z$ modulo $\FL_{(h)}$ and local knots} if the
following holds: for any sequence $\{a_i\}_{i\in I}$ of integers which
are all zero but finitely many, if a connected sum of the $a_i L_i$
($i\in I$) with some local knots added is $(h)$-solvable, then $a_i=0$
for all $i\in I$.  Roughly, this means that any connected sum of
copies of a link in $\{L_i\}_{i\in I}$ is never obtained as a
connected sum of copies of other links in $\{L_i\}_{i\in I}$ even
modulo $(h)$-solvability and local knots.

\begin{theorem}
  \label{theorem:independent-links-over-Z}
  For any $m>1$ and $n$, there are infinitely many $m$-component links
  which are in $\FL_{(n)}$, have vanishing $\rho_n$-invariant, and are
  independent over $\Z$ modulo $\FL_{(n.5)}$ and local knots.
\end{theorem}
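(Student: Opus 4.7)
The plan is to transfer the infinite-rank $\Z$-independence established at the string link level in the proof of Theorem~\ref{theorem:kernel-of-harvey-invariant} to the link setting by exploiting the fact that $\lambda_\T$ depends only on the closure. Let $\{\beta_i\}_{i\ge 1}\subset\BFSL_{(n)}$ be the boundary string links with $\rho_n(\beta_i)=0$ produced in the proof of Theorem~\ref{theorem:kernel-of-harvey-invariant}, together with the family of locally trivial $p$-structures $\{\T_j\}$ for $X=\bigvee^m S^1$ whose homomorphisms $\lambda_{\T_j}$ witness the $\Z$-linear independence of the $\{\beta_i\}$ in the abelianization of $\BFSL_{(n)}/(\BFSL_{(n.5)}\cdot\langle\text{local knots}\rangle)$; concretely, whenever $\{a_i\}$ are integers with only finitely many nonzero and $\sum_i a_i\lambda_{\T_j}(\beta_i)=0$ for all $j$, each $a_i$ must vanish. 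I claim that the closures $L_i:=\widehat{\beta_i}$, which are boundary links and hence $\widehat F$-links, form the desired family.

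The key observation I extract from the construction in Section~\ref{section:invariant-of-string-links} is the basing-invariance of $\lambda_\T$: since $\lambda_\T(\beta)=\lambda(M_n,\phi)$ is computed from the surgery manifold $M$ of $\widehat\beta$ equipped with a $p$-structure canonically attached to $\T$, it is an invariant of the pair $(\widehat\beta,\T)$ rather than of a particular $\widehat F$-string link presentation. Hence if $\widehat{\beta'}=\widehat\beta$---for example, if $\beta'$ arises from a different disk basing of $\widehat\beta$---then $\lambda_\T(\beta')=\lambda_\T(\beta)$, and any disk basing representative of $-L_i$ contributes $-\lambda_\T(\beta_i)$ to $\lambda_\T$.

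Given this, suppose $\{a_i\}$ are integers, almost all zero, and the connected sum
\[
L \;=\; (a_{i_1}L_{i_1})\#\cdots\#(a_{i_r}L_{i_r})\# K_1\#\cdots\# K_s,
\]
formed with some choice of disk basings, ordering, and local knots $K_1,\ldots,K_s$, is $(n.5)$-solvable. By the definition of the connected sum, $L=\widehat G$ where $G\in\CSL$ is a product, in the chosen order, of the string link representatives of the $a_i L_i$ and of the $K_j$ determined by the chosen disk basings. Each such factor is an $\widehat F$-string link because each $L_i$ and $K_j$ is an $\widehat F$-link, so $G\in\hCSL$; the hypothesis on $L$ then puts $G$ in $\hFSL_{(n.5)}$. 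Applying any $\T=\T_j$ from the detecting family and combining the homomorphism property of $\lambda_\T$ on $\hCSL$ (Theorem~\ref{theorem:lambda_T-homomorophism}), the basing-invariance, the vanishing of $\lambda_\T$ on local knot string links (Addendum to Theorem~\ref{theorem:lambda_T-homomorophism}), and the vanishing of $\lambda_\T$ on $\hFSL_{(n.5)}$ (Theorem~\ref{theorem:COT-filtration-and-lambda_T}), we obtain
\[
0 \;=\; \lambda_{\T_j}(G) \;=\; \sum_i a_i\,\lambda_{\T_j}(\beta_i) \qquad \text{for every } j,
\]
and the choice of $\{\T_j\}$ forces $a_i=0$ for all $i$, proving the independence of $\{L_i\}$ modulo $\FL_{(n.5)}$ and local knots.

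The main technical hurdle is justifying the basing-invariance of $\lambda_\T$: one must verify that the canonical $p$-structure on the surgery manifold $M$ produced from $\T$ is intrinsic to $M$ (and hence to $\widehat\beta$), so that two $\widehat F$-string link presentations of the same link always yield the same value $\lambda_\T(\beta)$. A secondary, more routine point is to check that any disk basing of an $\widehat F$-link is automatically an $\widehat F$-string link, so that $G$ really lies in $\hCSL$ where $\lambda_\T$ is defined and the homomorphism property of Theorem~\ref{theorem:lambda_T-homomorophism} applies.
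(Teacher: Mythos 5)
Your argument stands or falls on the ``basing-invariance'' you flag at the end, and that step is not a technical verification to be supplied later --- it is false in general, and the paper's entire Section~\ref{section:independence-of-links} is built around its failure. The value $\lambda_\T(\beta)$ is $\lambda(M_n,\phi)$ where the $p$-structure $(\{M_k\},\phi)$ on the surgery manifold $M$ is the pullback of $\T$ along the \emph{preferred meridian map} $X\to M_\beta$ (Lemma~\ref{lemma:p-tower-map-of-surgery-mfd}); it is not intrinsic to $M$. Two disk basings of the same link give string links related by the Habegger--Lin action, $\beta'\sim\Sigma\beta$ with $\Sigma\in\mathcal{S}$ (Proposition~\ref{proposition:habegger-lin-action}), hence two meridian maps into the same $M$ whose images agree on $H_1$ but differ by conjugations that are generally nontrivial in $\pi_1$. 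At height $1$ the induced covers coincide, but from the second stage on the characters are pulled back along \emph{lifts} of the meridian maps, and meridians that are homologous in $M$ need not be homologous in the covers; so the induced $p$-structures, the values $\phi([\tilde\alpha_j])$ on lifts of the infection curve, and hence $\lambda_\T$ can all change. Consequently your identity $\lambda_{\T_j}(G)=\sum_i a_i\lambda_{\T_j}(\beta_i)$ has no justification: the factor coming from a rebased copy of $L_i$ is $\Sigma_{ij}\beta(K_i)$, not $\beta(K_i)$, and there is no reason its $\lambda_{\T_j}$-value equals $\lambda_{\T_j}(\beta(K_i))$.

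The paper's proof of Theorem~\ref{theorem:independence-of-links-over-Z} (via Theorem~\ref{theorem:independence-of-links-over-Z}'s concrete form in Section~\ref{section:independence-of-links}) handles exactly this: it fixes a $p$-structure on the surgery manifold of the whole product $\beta=\prod_{i,j}\Sigma_{ij}\beta(K_i)$ by pulling back a well-chosen $\T_{i_0j_0}$ through \emph{one} factor's meridian map, then lets this induce a priori unknown $p$-structures $\T_{ij}$ on the other factors. Each term $\lambda_{\T_{ij}}(\beta(K_i))$ is then only known to be a sum $\sum_k\sigma_{K_i}(\zeta_{d_{i_0}}^{s_k})$ for unknown exponents $s_k$, and the contradiction is extracted from the sign and vanishing conditions of Lemma~\ref{lemma:knots-with-independent-signatures} (in particular $\sigma_{K_i}(\zeta_{d_i}^{s})\ge 0$ for all $s$, which is why $p=2$ is imposed there, and $\sigma_{K_j}(\zeta_{d_i}^s)=0$ for $j>i$), plus a separate local-triviality argument for the local-knot summands. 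Your proposal, which never invokes the Habegger--Lin action, never restricts to $p=2$, and needs none of the extra signature conditions, cannot be repaired by simply ``checking'' basing-invariance; the control of the unknown rebased terms is the actual content of the proof.
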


In \cite{Cha:2007-1}, it was shown that the $n$th iterated Bing
doubles of some knots are in $\FL_{(n)}$ and have vanishing
$\rho_n$-invariant but are not in $\FL_{(n+1)}$.  Using the techniques
used in the proof Theorem~\ref{theorem:independent-links-over-Z}, we
generalize this result by showing that the knots can be chosen in such
a way that their $n$th iterated Bing doubles satisfy the conclusion of
Theorem~\ref{theorem:independent-links-over-Z}.

In \cite{Cha:2007-1}, it was also shown that there are 2-torsion links
in an arbitrary depth of the Cochran-Orr-Teichner filtration; the
examples are the $n$th iterated Bing doubles of certain (negatively)
amphichiral knots.  (We call a link $L$ \emph{2-torsion} if a
connected sum of $L$ and $L$ itself is a slice link; we remark that
the $\rho_n$-invariants vanish for any 2-torsion link.)  We consider
the following analogous notion of independence for 2-torsion links: a
family $\{L_i\}_{i\in I}$ of 2-torsion links is said to be
\emph{independent over $\Z_2$ modulo $\FL_{(h)}$ and local knots} if a
subset $J$ of $I$ is an empty set whenever a connected sum of
$\{L_j\}_{j\in J}$ with some local knots added is $(h)$-solvable.  We
show that the 2-torsion examples of iterated Bing doubles in
\cite{Cha:2007-1} can be chosen in such a way that they are
independent in this sense:

\begin{theorem}
  \label{theorem:independent-links-over-Z2}
  For any $n$, there are infinitely many amphichiral knots whose $n$th
  iterated Bing doubles are 2-torsion, in $\FL_{(n)}$, and independent
  over $\Z_2$ modulo $\FL_{(n+1.5)}$ and local knots.
\end{theorem}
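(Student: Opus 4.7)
The plan is to combine the 2-torsion iterated Bing double construction of \cite{Cha:2007-1} with the additivity of $\lambda_\T$ provided by Theorem~\ref{theorem:COT-filtration-and-lambda_T}. Fix $p=2$. For a negatively amphichiral knot $K$, the $n$th iterated Bing double $L_K = BD^n(K)$ is a $2^n$-component boundary link (hence an $\hF$-link) that, by \cite{Cha:2007-1}, is 2-torsion in $\CL(2^n)$ and lies in $\FL_{(n)}$. A choice of disk basing yields a string link representative $\beta_K \in \hFSL_{(n)}$ whose class in $\hFSL_{(n)}/\hFSL_{(n.5)}\cdot\langle\text{local knots}\rangle$ is 2-torsion.

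First I would produce an infinite sequence $\{K_i\}_{i\ge 1}$ of amphichiral knots together with locally trivial, height-$n$ $2$-structures $\T_i$ on $X = \bigvee^{2^n} S^1$ arranged so that $\lambda_{\T_i}(\beta_{K_j}) \in L^0(\Q(\zeta_{2^{k_i}}))$ is nonzero if and only if $i=j$. Since $\lambda_{\T_i}$ is a group homomorphism on the above quotient (by the locally trivial addendum to Theorem~\ref{theorem:COT-filtration-and-lambda_T}) and $\beta_{K_i}$ is 2-torsion there, each nonzero detected value automatically sits in the 2-torsion subgroup of $L^0(\Q(\zeta_{2^{k_i}}))$, where Arf- and discriminant-type summands supply $\Z_2$-valued obstructions. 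The $K_i$ would be built from iterated satellites at varying prime powers as in \cite{Cha:2007-1}, and each $\T_i$ would be tailored to the $i$th satellite datum so that $\lambda_{\T_i}$ sees only $\beta_{K_i}$.

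Now suppose $J$ is a finite nonempty subset of $\{1,2,\ldots\}$ and some choice of disk basings makes the connected sum of $\{L_{K_j}\}_{j\in J}$ with local knots added $(n+1.5)$-solvable. The corresponding string link $\beta$ then lies in $\hFSL_{(n+1.5)}\cdot\langle\text{local knots}\rangle \subset \hFSL_{(n.5)}\cdot\langle\text{local knots}\rangle$, so the addendum forces $\lambda_{\T_i}(\beta)=0$. Since the disk-basing ambiguity for $\hF$-string links sits inside $\hFSL_{(n.5)}\cdot\langle\text{local knots}\rangle$, additivity of $\lambda_{\T_i}$ yields
\[
\sum_{j\in J}\lambda_{\T_i}(\beta_{K_j}) = 0 \quad \text{in } L^0(\Q(\zeta_{2^{k_i}}))
\]
for every $i$. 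By the first-step construction this forces $i \notin J$ for every $i$, so $J=\emptyset$, as required.

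The hardest step is the construction in the first paragraph: the 2-torsion constraint rules out the archimedean signature-type invariants used in the proof of Theorem~\ref{theorem:kernel-of-harvey-invariant}, so one must extract infinitely many independent $\Z_2$-valued obstructions from the 2-primary part of the relevant Witt groups. This will be carried out by choosing the infection data defining $K_i$ so that, after pulling back through the tower $\T_i$, the associated Blanchfield-type form takes distinct nontrivial values in the Arf/discriminant summands of $L^0(\Q(\zeta_{2^{k_i}}))$, following the 2-torsion template of \cite{Cha:2007-1}; amphichirality of $K_i$ then guarantees the 2-torsion property in the ambient concordance group.
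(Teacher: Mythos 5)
There is a genuine gap, and it sits exactly at the point your proposal waves through in one sentence: the claim that ``the disk-basing ambiguity for $\hF$-string links sits inside $\hFSL_{(n.5)}\cdot\langle\text{local knots}\rangle$.'' This is unjustified, and it is precisely the difficulty the paper's Section~\ref{section:independence-of-links} exists to handle. Two string links with concordant closures differ by the Habegger--Lin action, $\beta \mapsto \Sigma\beta$ with $\Sigma\in\mathcal{S}$, not by multiplication by an element of $\hFSL_{(n.5)}\cdot\langle\text{local knots}\rangle$; if your claim were true, link independence would follow formally from Theorem~\ref{theorem:Z-independence-of-string-links} and no disk-basing analysis would be needed at all (the paper stresses that $\CL(m)$ has no natural group structure for exactly this reason). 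Because of this, you cannot conclude $\sum_{j\in J}\lambda_{\T_i}(\beta_{K_j})=0$ for your chosen $\T_i$. What the paper actually does is fix string link representatives $\beta_{ij}=\Sigma_{ij}\beta(K_i)$, form their product $\beta$, pull a single $p$-structure back to the surgery manifold $M_\beta$ through one preferred meridian map, and then push it to each factor through the other meridian maps; the structures so induced on the factors are \emph{not} the tailored ones, so the vanishing you need on the ``other'' factors must hold for \emph{every} $2$-structure of the relevant height, not just for a structure chosen to see only $\beta_{K_i}$. This is why the paper's key input (from Corollary~8.5 of \cite{Cha:2007-1}) is quantified over all $\Z_4$-valued $2$-structures: for $i\ne j$, $\bigl(\dis\lambda_{\T}(\beta_j),-1\bigr)_{p_i}=+1$ for \emph{any} such $\T$, while some locally trivial $\T_i$ gives $\bigl(\dis\lambda_{\T_i}(\beta_i),-1\bigr)_{p_i}=-1$. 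Your ``nonzero iff $i=j$ for the tailored $\T_i$'' hypothesis is both too weak (it says nothing about the uncontrolled induced structures) and too coarse (nonzero Witt classes could still cancel in a sum; one needs an honest $\Z_2$-valued homomorphism, namely the discriminant followed by a norm residue symbol at a dual prime, to turn the sum into a product of signs).

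A secondary mismatch: the statement involves $\FL_{(n+1.5)}$, and the paper detects the $2$-torsion Bing doubles with $\Z_4$-valued $2$-structures of height $n+1$ (the relevant quotient is by $(n+1.5)$-solvability, with local knots killed by local triviality via Lemma~\ref{lemma:local-triviality-under-pullback}); your setup with height-$n$ structures and the quotient $\hFSL_{(n)}/\bigl(\hFSL_{(n.5)}\cdot\langle\text{local knots}\rangle\bigr)$ does not match the invariants that actually see these examples. The overall strategy (infection examples from \cite{Cha:2007-1}, locally trivial structures, $\Z_2$-valued discriminant-type obstructions) is the right one, but as written the argument fails at the disk-basing step and at the ``for all structures'' quantifier needed to control it.
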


The proofs of Theorems~\ref{theorem:independent-links-over-Z}
and~\ref{theorem:independent-links-over-Z2} involve a careful analysis
of the effect of a disk basing change on $p$-structures, utilizing
results of Habegger and
Lin~\cite{Habegger-Lin:1990-1,Habegger-Lin:1998-1}.  See
Section~\ref{section:independence-of-links} for details.

\section{Invariants of string links}
\label{section:invariant-of-string-links}

We review basic definitions and fix notations for string links.  Fix
$m$ distinct points $p_1,\ldots,p_m$ in the interior of~$D^2$.  An
\emph{$m$-component string link} (or \emph{$m$-string link}) is
defined to be the image of a locally flat proper embedding of
$\{1,\ldots,m\}\times [0,1]$ into $D^2\times[0,1]$ sending $(i,t)$ to
$(p_i,t)$ for $i=1,\ldots,m$ and $t=0,1$.  The product of two string
links is defined by juxtaposition.  The concordance classes of string
links (in the sense of \cite{Habegger-Lin:1998-1}) form a group under
the product operation; the inverse $-\beta$ is the mirror image of
$\beta$ (about $D^2\times\{\frac{1}{2}\}$) with reversed orientation.
We denote this group by $\CSL=\CSL(m)$ as in the introduction.

Gluing $D^2\times\{0\}$ and $D^2\times\{1\}$ along the identity map
of $D^2$ and then filling it in with a solid torus in such a way that
the image of $\{*\}\times[0,1]$ bounds a disk for $*\in \partial D^2$,
we obtain from $\beta$ a link $L$ in $S^3$, which is called the
\emph{closure} of~$\beta$.  It is known that the closure $L$ is a
slice link if and only if $\beta$ is concordant to a trivial string
link~\cite{Habegger-Lin:1998-1}.  We denote the exterior of $\beta$
and $L$ by $E_\beta$ and $E_L$, respectively.  Also, the surgery
manifold of $L$ is denoted by $M_\beta$ or $M_L$.  There are natural
inclusions $E_\beta \to E_L \to M_\beta$.

As in the introduction, let $X=\bigvee^m S^1$, the wedge of $m$
circles.  Fix an embedding $f\colon X\to D^2-\{p_1,\ldots,p_m\}$ such
that the wedge point $*\in X$ is sent to a fixed basepoint on
$\partial D^2$ and $x \to (f(x),0)$ defines a map $\mu\colon X \to
E_\beta$ which sends the $i$th circle of $X$ to a positive meridian of
the $i$th component of~$\beta$.  We call $\mu$ the \emph{preferred
  meridian map} of~$\beta$.  Sometimes the composition
\[
X \xrightarrow{\mu} E_\beta \to M_{\beta}
\]
is also referred to as the preferred meridian map.

We call a string link $\beta$ an \emph{$\hF$-string link} if its
closure $L$ is an $\hF$-link in the sense of
Levine~\cite{Levine:1989-1}, that is, the composition
\[
X \xrightarrow{\mu} E_\beta \to E_{L}
\]
induces an isomorphism $\widehat{\pi_1(X)} \to \widehat{\pi_1(E_L)}$
and the preferred longitudes of $L$ are in the kernel of the natural
map $\pi_1(E_L) \to \widehat{\pi_1(E_L)}$.  Here $\widehat G$
denotes the algebraic closure of a group $G$ defined by
Levine~\cite{Levine:1989-1}.

As a (potentially) generalized notion, we say that $\beta$ is a
\emph{$\Z_{(p)}$-coefficient $\hF$-string link} if $\beta$ satisfies
the defining condition of an $\hF$-string link with Levine's algebraic
closure replaced by the ``algebraic closure with respect to
$\Z_{(p)}$-coefficients'' defined in~\cite{Cha:2004-1}.  Equivalently,
$\beta$ is a \emph{$\Z_{(p)}$-coefficient $\hF$-string link} if its
closure is a \emph{$\Z_{(p)}$-coefficient $\hF$-link} in the sense
of~\cite{Cha:2007-1}.  Since an $\hF$-string link is a
$\Z_{(p)}$-coefficient $\hF$-string link for any $p$ and since all
results in this paper hold for the latter as well as the former, from
now on $\widehat G$ denotes the algebraic closure of $G$ with respect
to $\Z_{(p)}$-coefficients, and an ``$\hF$-string link'' designates a
$\Z_{(p)}$-coefficient $\hF$-string link, as an abuse of terminology.
It can be seen that the set $\hCSL$ of classes of $\hF$-string links
is closed under the group operations of $\CSL$, that is, $\hCSL$ is a
subgroup in~$\CSL$.

Suppose $Y \to Z$ is a map between CW-complexes.  Then a $\Z_d$-valued
$p$-structure of height $n$ for $Z$ induces a $\Z_d$-valued
$p$-structure of height $n$ for $Y$ via pullback along $Y\to Z$.  We
recall from \cite{Cha:2007-1} that $Y\to Z$ is called a
\emph{$p$-tower map} if pullback gives rise to a 1-1 correspondence
between $\Z_d$-valued $p$-structures of height $n$ for $Y$ and $Z$ for
any $n$ and~$d$.  For a more precise description, see Definition 3.4
of~\cite{Cha:2007-1}.  The following was proved in~\cite{Cha:2007-1}:

\begin{lemma}[Proposition 6.3 of \cite{Cha:2007-1}]
  \label{lemma:p-tower-map-of-surgery-mfd}
  For any $\hF$-string link $\beta$, any meridian map $X \to M_\beta$
  is a $p$-tower map.  In particular, the preferred meridian map into
  $M_\beta$ is a $p$-tower map.
\end{lemma}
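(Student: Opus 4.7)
The plan is to prove the lemma by induction on the height~$n$, after reformulating $p$-structures in group-theoretic terms. A $\Z_d$-valued $p$-structure of height $n$ for a space $Z$ corresponds to a descending chain of subgroups $\pi_1(Z)=G_0\supset G_1\supset\cdots\supset G_n$ with each $G_{i-1}/G_i$ a finite abelian $p$-group, together with a character $\phi\colon G_n\to\Z_d$; pullback along a map $f\colon Y\to Z$ produces the data $\{f_*^{-1}(G_i)\}$ and $\phi\circ f_*$.  The goal is to show that for $f=\mu$ this operation is bijective at every~$n$ and every~$d$.

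The base case $n=0$ amounts to a bijection $\Hom(\pi_1(M_\beta),\Z_d)\cong\Hom(\pi_1(X),\Z_d)$, which would follow from $\mu_*$ being an $H_1$-isomorphism. The $\hF$-hypothesis forces the preferred longitudes of $L$ to be trivial in $\widehat{\pi_1(E_L)}$ and hence in its abelianization, so all pairwise linking numbers vanish; consequently $H_1(M_\beta)=H_1(E_L)=\Z^m$ is freely generated by the meridians, and since $\mu$ sends each loop $x_i$ to the $i$th meridian by construction, $\mu_*$ is the required isomorphism.

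For the inductive step, assume that $p$-towers of height~$n$ correspond bijectively under pullback along~$\mu$ and that the induced map $X_n\to M_n$ on the top-level covers is an $H_1(-;\Z_{(p)})$-isomorphism. The first property provides the bijection of subgroup chains up to height~$n$; the second, exactly as in the base case, furnishes the bijection of $\Z_d$-characters on $\pi_1(X_n)$ and $\pi_1(M_n)$ and, via the characters, promotes the tower one step further to height~$n+1$. The main obstacle is therefore to close the induction by establishing the $H_1(-;\Z_{(p)})$-isomorphism at the new top level $X_{n+1}\to M_{n+1}$. My plan is to argue that the $\hF$-property propagates through iterated abelian $p$-covers: the hypothesis $\widehat{\mu_*}\colon\widehat{\pi_1(X)}\xrightarrow{\cong}\widehat{\pi_1(M_\beta)}$ together with the compatibility of algebraic closure with finite abelian $p$-group quotients should identify the algebraic closures of $\pi_1(X_{n+1})$ and $\pi_1(M_{n+1})$, and then a Stallings-Dwyer style theorem with $\Z_{(p)}$-coefficients should upgrade this algebraic-closure identification to the required $H_1(-;\Z_{(p)})$-isomorphism. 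The last sentence of the lemma then follows at once, since any two meridian maps agree on $\pi_1$ up to conjugation and $p$-structures are conjugation-invariant.
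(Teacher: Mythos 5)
This lemma is not proved in the paper at all: it is quoted verbatim from Proposition~6.3 of \cite{Cha:2007-1}, and the route taken there (and imported here as Lemma~\ref{lemma:algebraic-closure-and-p-tower-map}\,(3)) is to show that the meridian map induces an isomorphism $\widehat{\pi_1(X)}\to\widehat{\pi_1(M_\beta)}$ of algebraic closures and then to invoke the general theorem that any map inducing such an isomorphism is a $p$-tower map. Your level-by-level induction is essentially an attempt to unpack that general theorem in this special case rather than a genuinely different route, and the two places where you write ``should'' are precisely where its content sits; as written the proposal therefore has real gaps rather than just omitted routine details.

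Concretely: (a) the $\hF$-hypothesis gives an isomorphism $\widehat{\pi_1(X)}\to\widehat{\pi_1(E_L)}$ for the link \emph{exterior}, not for the surgery manifold, yet in your inductive step you treat $\widehat{\pi_1(X)}\cong\widehat{\pi_1(M_\beta)}$ as ``the hypothesis''. Since $\pi_1(M_\beta)$ is the quotient of $\pi_1(E_L)$ by the normal closure of the longitudes, you must argue that killing elements which already die in $\widehat{\pi_1(E_L)}$ does not change the algebraic closure; your vanishing-linking-number observation only handles $H_1$, i.e.\ the base case. (b) The crux of the induction --- that a closure isomorphism persists to finite abelian $p$-covers and yields an $H_1(-;\Z_{(p)})$-isomorphism of the covers --- is not a formal ``compatibility'' of Levine's closure with finite abelian $p$-quotients: the closure functor does not obviously commute with passage to finite-index subgroups. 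The actual argument is homological, and is exactly Lemma~\ref{lemma:algebraic-closure-of-p-cover} of this paper: one uses that $G\to\widehat G$ is 2-connected and that a closure isomorphism produces a common 2-connected target (parts (0)--(2) of Lemma~\ref{lemma:algebraic-closure-and-p-tower-map}), and then Levine's $\Z_{(p)}$-coefficient Stallings--Dwyer-type result to propagate 2-connectivity to the $\Gamma$-covers. Note also that your sketch assigns the Stallings--Dwyer input to the wrong step: it is needed for the propagation through the cover, whereas the upgrade from a closure isomorphism to an $H_1(-;\Z_{(p)})$-isomorphism is immediate from 2-connectivity of $G\to\widehat G$. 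Once (a) is repaired and (b) is carried out (for instance by quoting the two lemmas above), your induction does close --- the remaining bookkeeping, that $H_1$-isomorphisms mod $p^r$ give a bijection of characters and of finite abelian $p$-covers because any character into $\Z_{p^r}$ factors through $H_1\otimes\Z_{p^r}$, is fine --- but in its present form the proposal asserts, rather than proves, the two statements the cited proposition exists to supply.
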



Suppose $\T=(\{X_k\}, \theta)$ is a $\Z_d$-valued $p$-structure of
height $n$ for~$X$.  Since the preferred meridian map $X \to M_\beta$
of $\beta$ is a $p$-tower map, there is a uniquely determined
$\Z_d$-valued $p$-structure $(\{M_k\}, \phi)$ of height $n$ for
$M_\beta$ which induces $\T$ via pullback.  In \cite[Definition
2.2]{Cha:2007-1}, an invariant $\lambda(M_n,\phi)\in L^0(\Q(\zeta_d))$
is defined when $(M_n,\phi)=0$ in the topological bordism group
$\Omega_3^{top}(B\Z_d)$.

\begin{lemma}
  \label{lemma:well-definedness-of-lambda_T}
  If $\beta$ is an $\hF$-string link, then $(M_n,\phi)=0$ in
  $\Omega_3^{top}(B\Z_d)$.
\end{lemma}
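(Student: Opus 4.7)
The plan is to reduce this to the analogous statement for $\hF$-links already established in~\cite{Cha:2007-1}. The closure $L = \hat\beta$ is, by definition of an $\hF$-string link, a ($\Z_{(p)}$-coefficient) $\hF$-link, and one has $M_\beta = M_L$ literally as 3-manifolds. Moreover the preferred meridian map $\mu\colon X \to M_\beta$ factors through $E_\beta \hookrightarrow E_L \hookrightarrow M_L$, so under the identification $M_\beta = M_L$ it agrees with the meridian map used in~\cite{Cha:2007-1}. By Lemma~\ref{lemma:p-tower-map-of-surgery-mfd} and the uniqueness clause of the pullback correspondence, the $p$-structure $(\{M_k\},\phi)$ on $M_\beta$ induced from $\T$ coincides with the one on $M_L$ constructed in~\cite{Cha:2007-1}, so the statement reduces to the $\hF$-link case proved there.

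For perspective I would also sketch the underlying geometric content. Since $\Omega_3^{top}(B\Z_d) \cong H_3(\Z_d)$, it suffices to exhibit a compact 4-manifold $W$ with $\partial W = M_n$ together with a map $W \to B\Z_d$ extending the classifying map of $\phi$. The obvious candidate, the 2-handle trace of the $0$-framed surgery on $L$, is simply connected, so no nontrivial cover of $M_\beta$ lifts over it. Instead one must build $W$ with $\partial W = M_\beta$ and an extension $\widetilde\mu\colon X \to W$ of $\mu$ which is itself a $p$-tower map; the $\hF$-hypothesis is used precisely here, since the preferred longitudes of $L$ lie in the kernel of $\pi_1(E_L) \to \widehat{\pi_1(E_L)}$ and can therefore be capped off by $2$-handles without destroying the correspondence of $p$-covers, and iterating this realizes the $\Z_{(p)}$-algebraic closure data geometrically.

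Granting such a $W$, the pullback of $\T$ along $\widetilde\mu$ yields a $p$-structure $(\{W_k\},\Phi)$ on $W$ whose restriction to $\partial W$ is forced by naturality to equal $(\{M_k\},\phi)$. The cover $W_n \to W$ then has boundary $M_n$ and carries a character $\Phi$ extending $\phi$, so $(W_n,\Phi)$ is the required null-bordism. The main obstacle is constructing $W$ with the full $p$-tower-map property rather than merely a $\pi_1$-isomorphism on meridians: this is the technical core of~\cite{Cha:2007-1}, and the present lemma is a straightforward translation once the identification $M_\beta = M_L$ and the compatibility of meridian maps have been recorded.
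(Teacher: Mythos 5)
There is a genuine gap: your proposal never actually proves the lemma. The first paragraph reduces everything to ``the analogous statement for $\hF$-links already established in~\cite{Cha:2007-1}'', but you do not verify that such a statement exists in the needed generality (an arbitrary $\Z_d$-valued $p$-structure of arbitrary height on the surgery manifold of a $\Z_{(p)}$-coefficient $\hF$-link), and the paper itself evidently does not regard it as quotable -- it gives a direct proof, whose byproducts ($H_1(M_i)$ is $p$-torsion free, and $H_1(M_i)\otimes\Z_p\cong H_1(X_i)\otimes\Z_p$) are explicitly reused later in the proof of Corollary~\ref{corollary:n.5-solvability-obstruction-for-links}. Your second and third paragraphs then sketch a geometric route -- build a $4$-manifold $W$ with $\partial W=M_\beta$ and an extension $X\to W$ that is itself a $p$-tower map, and take $W_n$ as the null-bordism -- but you concede that the construction of such a $W$ is ``the technical core of~\cite{Cha:2007-1}'' and leave it undone. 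That construction is nowhere established in this paper, is substantially harder than the lemma itself, and would prove far more than is needed (a bounding $4$-manifold compatible with the entire tower, rather than the mere vanishing of one bordism class over $B\Z_d$). So as written the proposal is a citation plus an unexecuted sketch, with the burden of proof deferred both times.

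The paper's actual argument is short, purely homological, and contains the key idea your proposal is missing: since $X\to M_\beta$ is a $p$-tower map (Lemma~\ref{lemma:p-tower-map-of-surgery-mfd}), the lift $X_n\to M_n$ induces $\Hom(\pi_1(M_n),\Z_{p^r})\cong\Hom(\pi_1(X_n),\Z_{p^r})$ for all $r$, hence $H_1(M_n)\otimes\Z_{p^r}\cong(\Z_{p^r})^{s}$ with $s=\beta_1(X_n)$ because $X_n$ is a $1$-complex; comparing $r=1$ with large $r$ shows $H_1(M_n)$ is $p$-torsion free. Consequently $\phi\colon\pi_1(M_n)\to\Z_d$ factors through the free abelian group $H_1(M_n)/\text{torsion}$, hence through $\Z\to\Z_d$, so $(M_n,\phi)$ lies in the image of $\Omega_3^{top}(B\Z)\to\Omega_3^{top}(B\Z_d)$, and $\Omega_3^{top}(B\Z)=0$. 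No bounding $4$-manifold for $M_\beta$ is needed at all; if you want a self-contained proof, this factorization-through-$\Z$ step is the ingredient to supply.
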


The proof of Lemma~\ref{lemma:well-definedness-of-lambda_T} is
postponed.  By Lemma~\ref{lemma:well-definedness-of-lambda_T}, the
following definition is meaningful for any $\hF$-string link:

\begin{definition}
  For a $p$-structure $\T=(\{X_k\}, \theta)$ for $X$ and an
  $\hF$-string link $\beta$, we define
  \[
  \lambda_\T(\beta)=\lambda(M_n,\phi) \in L^0(\Q(\zeta_d))
  \]
  where $(\{M_k\},\phi)$ is the $p$-structure for $M_\beta$ induced by
  $\T$ as above.
\end{definition}

The main aim of this section is to prove the following additivity:
\begin{theorem}
  \label{theorem:additivity-of-lambda_T}
  If $\beta$ and $\beta'$ are $\hF$-string links, then for any
  $p$-structure $\T$, we have
  \[
  \lambda_\T(\beta\cdot \beta') = \lambda_\T(\beta) +
  \lambda_\T(\beta') \quad\text{in }L^0(\Q(\zeta_d)).
  \]
\end{theorem}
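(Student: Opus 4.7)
The plan is to realize the additivity as the vanishing of a signature defect along an explicit 4-dimensional cobordism, leveraging the fact that $\lambda$ is defined as a bordism-theoretic difference of intersection forms.

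I would first construct a 4-manifold $W$ with $\partial W = M_{\beta\beta'} \sqcup (-M_\beta) \sqcup (-M_{\beta'})$ together with a map $\iota \colon X \to W$ whose restriction to each of the three boundary components is the preferred meridian map. Geometrically, $W$ arises by gluing the product cobordisms $M_\beta \times [0,1]$ and $M_{\beta'} \times [0,1]$ along a 3-dimensional junction built from neighborhoods of the basing disks $\Delta_\beta \subset M_\beta$ and $\Delta_{\beta'} \subset M_{\beta'}$; the gluing is dictated by the string link product, which (after cutting $M_\beta$ and $M_{\beta'}$ along their basing disks to obtain 3-manifolds $M_\beta^\circ, M_{\beta'}^\circ$ with $S^2$-boundary decomposed as $D_+ \cup_\partial D_-$) satisfies the identity
\[
M_{\beta\beta'}^\circ = M_\beta^\circ \cup_{D^2} M_{\beta'}^\circ.
\]
The map $\iota$ is well-defined because all three preferred meridian maps $X \to M_\beta$, $X \to M_{\beta'}$, $X \to M_{\beta\beta'}$ factor through the same disk basing. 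I would then verify that $\iota$ is a $p$-tower map, extending Lemma~\ref{lemma:p-tower-map-of-surgery-mfd} to $W$ via the $\hF$-hypothesis and a van Kampen computation of $\pi_1(W)$; with this in place, the $p$-structure $\T$ pulls back uniquely to a $p$-structure $(\{W_k\},\Phi)$ on $W$ extending those induced on the three boundary 3-manifolds.

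At this point the bordism-defect characterization of $\lambda$ from \cite{Cha:2007-1} yields
\[
\lambda_\T(\beta\beta') - \lambda_\T(\beta) - \lambda_\T(\beta') = \operatorname{def}(W_n) \quad\text{in } L^0(\Q(\zeta_d)),
\]
where $W_n$ is the top cover of $W$ in the induced $p$-tower and $\operatorname{def}(W_n)$ denotes the Witt class of its $\Q(\zeta_d)$-intersection form minus the Witt class of its untwisted intersection form. The main obstacle, and the technical heart of the argument, is showing that $\operatorname{def}(W_n) = 0$. My plan is a Mayer--Vietoris computation using the decomposition of $W$ into the two product pieces and the junction: each product $N \times [0,1]$ contributes no middle-dimensional homology beyond that already visible on its boundary, so the intersection-form contribution to $W_n$ comes essentially from the lift of the 3-dimensional junction, which is too small to carry a nontrivial middle-dimensional form. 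Concretely, I expect the junction to produce a Lagrangian common to both the twisted and untwisted intersection forms of $W_n$, so that both are metabolic with the same metabolizer, their Witt classes agree, and the defect vanishes in $L^0(\Q(\zeta_d))$.
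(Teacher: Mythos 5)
There is a genuine gap, and it sits at the foundation of your construction. The identity $M^\circ_{\beta\beta'}=M^\circ_\beta\cup_{D^2}M^\circ_{\beta'}$ is false. The basing disk meets the closure of $\beta$ in $m$ points, and after $0$-surgery its meridional boundary circles do not bound disks in the surgered solid tori (the meridian becomes the core direction), so the basing disk does not survive as an embedded disk in the closed manifold $M_\beta$ at all; there is no way to "cut $M_\beta$ along it'' and get a piece with $S^2$-boundary. Read naively, your identity would assert $M_{\beta\beta'}\cong M_\beta\# M_{\beta'}$, which already fails for $m=1$: the $0$-surgery on $K\# K'$ has $H_1\cong\Z$, while $M_K\# M_{K'}$ has $H_1\cong\Z^2$. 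In general $H_1(M_\beta\# M_{\beta'})\cong\Z^{2m}$ but $H_1(M_{\beta\beta'})\cong\Z^m$, so any cobordism realizing the additivity must kill $m$ first-homology classes; gluing the two product cobordisms along a thickened disk cannot do this. The correct "standard'' cobordism (used in the paper, following Cochran--Orr--Teichner and Harvey) attaches a $1$-handle to $(M_\beta\cup M_{\beta'})\times[0,1]$ and then $m$ $2$-handles along the curves $\mu_i^{\vphantom{\prime}}(\mu_i')^{-1}$ identifying the two sets of preferred meridians.

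This error then hides the real analytic content of the theorem. With the correct cobordism the "junction'' is not homologically negligible: in the covers $W_n$ (and the further $\Z_d$-cover) the $m$ $2$-handles lift to many $2$-handles, and their cores create new classes in $H_2$ -- on which the $\Q(\zeta_d)$-intersection form need not be metabolic -- unless one shows that the lifted attaching curves stay linearly independent in $H_1$ of the cover of $M_\beta\vee M_{\beta'}$ with $\Z_{(p)}$- and $\Q(\zeta_d)$-coefficients. That is exactly where the $\hF$-hypothesis enters (beyond merely defining the pulled-back $p$-structure): the paper proves this injectivity (Lemma~\ref{lemma:triviality-of-intersection-form}) using the algebraic-closure isomorphisms induced by the meridian maps and Levine's $2$-connectivity-passes-to-$p$-covers argument (Lemmas~\ref{lemma:algebraic-closure-and-p-tower-map} and~\ref{lemma:algebraic-closure-of-p-cover}), concluding that $H_2$ of the boundary surjects onto $H_2(W_n)$ with both coefficient systems, so both Witt classes in the defect vanish. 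Your Mayer--Vietoris heuristic, in which the junction cover has trivial $H_1$ because a disk is simply connected, would prove additivity with no use of the $\hF$-condition at this step -- a further sign that the proposed decomposition cannot be correct. (A smaller point: extending the $p$-structure over $W$ is also not just a van Kampen computation; one routes through a common $2$-connected target as in Lemma~\ref{lemma:p-tower-bordism}, though your appeal to the $\hF$-hypothesis there is at least in the right spirit.)
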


By \cite[Theorem~6.2]{Cha:2007-1}, $\lambda_T(\beta)=0$ if $\beta$ is
concordant to a trivial string link.  Combining this with
Theorem~\ref{theorem:additivity-of-lambda_T}, we obtain (the first
part of) Theorem~\ref{theorem:lambda_T-homomorophism} stated in the
introduction: for any $p$-structure $\T=(\{X_k\}, \theta)$, the
invariant $\lambda_\T(-)$ gives rise to a group homomorphism
\[
\lambda_\T\colon \hCSL \to L^0(\Q(\zeta_d)).
\]

The remaining part of this section is devoted to the proof of
Lemma~\ref{lemma:well-definedness-of-lambda_T} and
Theorem~\ref{theorem:additivity-of-lambda_T}.

\begin{proof}[Proof of Lemma~\ref{lemma:well-definedness-of-lambda_T}]
  Suppose $\beta$ is an $\hF$-string link.  Suppose $\{M_i\}$ is a
  $p$-tower of height $n$ for $M_\beta$ and $\phi\colon\pi_1(M_n) \to
  \Z_d$ is a character.  Let $\{X_i\}$ be the $p$-tower for $X$ and
  $\theta\colon \pi_1(X_n) \to \Z_d$ be the character which are
  induced via pullback along $X \to M_\beta$.  Since $X \to M_\beta$
  is a $p$-tower map, the lift $X_n \to M_n$ of $X\to M_\beta$ induces
  an isomorphism
  \[
  \Hom(\pi_1(M_n),\Z_{p^r}) \cong \Hom(\pi_1(X_n),\Z_{p^r})
  \]
  for any~$r$.  Since
  \[
  H_1(-)\otimes \Z_{p^r} \cong \Hom(H_1(-),\Z_{p^r})\cong
  \Hom(\pi_1(-),\Z_{p^r})
  \]
  we have
  \[
  H_1(M_n)\otimes \Z_{p^r} \cong H_1(X_n)\otimes \Z_{p^r}.
  \]
  Since $X_n$ is a 1-complex, $H_1(X_n)\otimes \Z_{p^r} =
  (\Z_{p^r})^s$ for all $r$, where $s=\beta_1(X_n)$ is the first Betti
  number.  Looking at the case $r=1$ and the case of a sufficiently
  large $r$, one can see that $H_1(M_n)$ is $p$-torsion free, that is,
  $H_1(M_n)$ has no nontrivial $p$-primary summand.  From this it
  follows that $\phi\colon \pi_1(M_n) \to \Z_d$ factors through
  $H_1(M_n)/\text{torsion}$, since $d$ is a power of~$p$.  Furthermore
  the induced map $H_1(M_n)/\text{torsion} \to \Z_d$ factors through
  the projection $\Z \to \Z_d$ since $H_1(M_n)/\text{torsion}$ is a
  free abelian group.

  From the above observation, it follows that $(M_n,\phi)\in
  \Omega^{top}_3(B\Z_d)$ is contained in the image of
  $\Omega^{top}_3(B\Z) \to \Omega^{top}_3(B\Z_d)$.  Therefore,
  $(M_n,\phi)=0$ in $\Omega^{top}_3(B\Z_d)$ since
  $\Omega^{top}_3(B\Z)=0$ (e.g., by the Atiyah-Hirzebruch spectral
  sequence).
\end{proof}

Let $M=M_\beta$, $M'=M_{\beta'}$, and $N=M_{\beta\cdot\beta'}$ be the
surgery manifolds of the closures of $\beta$, $\beta'$, and
$\beta\cdot\beta'$, respectively.  In order to prove
Theorem~\ref{theorem:additivity-of-lambda_T}, as in
\cite{Cochran-Orr-Teichner:2002-1,Harvey:2006-1} we consider a
``standard'' cobordism $W$ between $M\cup M'$ and $N$, which can be
described as follows: attaching a 1-handle to $(M\cup M')\times[0,1]$,
we obtain a cobordism from $M\cup M'$ to $M \# M'$.  And then
attaching $m$ 2-handles along the attaching spheres in $M\#M'$ as
illustrated in Figure~\ref{figure:surgery-manifold-cobordism}, we
obtain a cobordism from $M \# M'$ to~$N$.  (The last two surgery
diagrams in Figure~\ref{figure:surgery-manifold-cobordism} are
equivalent by handle sliding and cancellation.)  Our $W$ is obtained
by concatenating these cobordisms.

\begin{figure}[ht]
  \begin{center}
    \includegraphics[scale=.9]{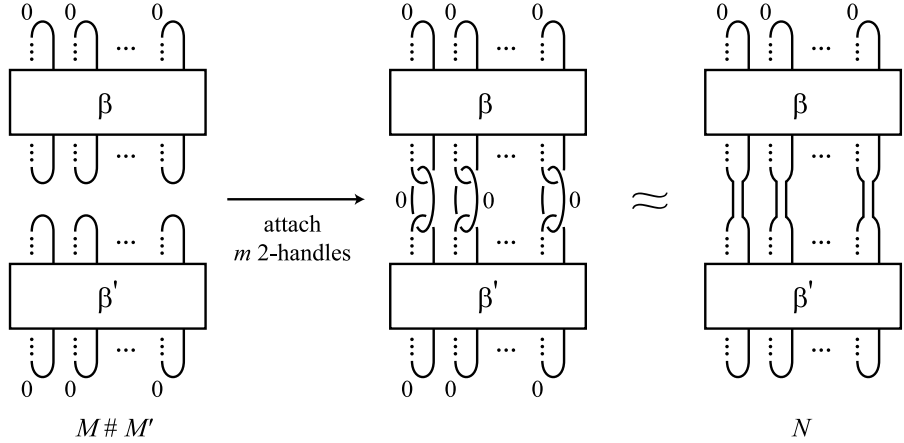}
  \end{center}
  \caption{A cobordism from $M \# M'$ to $N$}
  \label{figure:surgery-manifold-cobordism}
\end{figure}

Let $(\{M_k\},\phi)$, $(\{M_k'\},\phi')$ and $(\{N_k\},\psi)$ be the
$p$-structures of height $n$ for $M$, $M'$ and $N$, respectively,
which are determined by a given $p$-structure $\T=(\{X_k\}, \theta)$
of height $n$ for~$X$.  The first step of our proof is a construction
of a bordism between $(M_n,\phi)$, $(M_n',\phi')$, $(N_n,\psi)$
using~$W$.

\begin{lemma}
  \label{lemma:p-tower-bordism}
  There is a $p$-tower $\{W_k\}$ of $W$ which induces $\{M_k\}$,
  $\{M_k'\}$, and $\{N_k\}$ by pullback along the inclusions.
  Furthermore, there is a character $\pi_1(W_n) \to \Z_d$ which
  restricts to $\phi$, $\phi'$, and $\psi$ on $\pi_1(M_n)$,
  $\pi_1(M_n')$, and $\pi_1(N_n)$, respectively.
\end{lemma}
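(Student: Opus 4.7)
The plan is to construct $\{W_k\}$ and the top character $\pi_1(W_n) \to \Z_d$ by exploiting a van Kampen pushout description of $\pi_1(W)$, together with the $p$-tower map property of $X \to M$ and $X \to M'$ from Lemma~\ref{lemma:p-tower-map-of-surgery-mfd}.

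First, collapsing the connecting 1-handle of $W$ converts $(M \sqcup M') \times I$ into the wedge $M \vee M'$, and the $m$ two-handles attach $2$-disks along loops of the form $\mu_i \cdot (\mu_i')^{-1}$ pairing the $i$th meridians of $\beta$ and $\beta'$. By van Kampen,
\[
\pi_1(W) \;\cong\; \pi_1(M) *_{\pi_1(X)} \pi_1(M'),
\]
with the amalgamation given by the preferred meridian maps. The $p$-structure $\T$ determines a surjection $\pi_1(X) \twoheadrightarrow \pi_1(X)/\pi_1(X_k)$ onto a finite $p$-group at each level $k$, which by Lemma~\ref{lemma:p-tower-map-of-surgery-mfd} pulls back to compatible surjections onto the same group from $\pi_1(M)$ and $\pi_1(M')$ (with kernels $\pi_1(M_k)$ and $\pi_1(M_k')$). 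Since these agree on $\pi_1(X)$, the universal property of the amalgamated product produces a surjection $\pi_1(W) \twoheadrightarrow \pi_1(X)/\pi_1(X_k)$; I would take $W_k$ to be the associated regular cover. The pullback of $W_k$ along $M \hookrightarrow W$ then corresponds to $\pi_1(M) \cap \ker(\pi_1(W) \twoheadrightarrow \pi_1(X)/\pi_1(X_k)) = \pi_1(M_k)$, hence equals $M_k$, and likewise for $M_k'$. For the $N$-boundary, the three maps $X \to W$ factoring through $M$, $M'$, $N$ are homotopic in $W$ (the 2-handles make meridians from the three surgery manifolds freely homotopic), so the composition $\pi_1(X) \to \pi_1(N) \to \pi_1(X)/\pi_1(X_k)$ equals the original surjection; the $p$-tower map property of $X \to N$---which applies because $\beta \cdot \beta'$ is an $\hF$-string link, since $\hCSL$ is a subgroup of $\CSL$---then forces the pullback of $W_k$ to $N$ to be $N_k$.

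The top character $\Phi\colon \pi_1(W_n) \to \Z_d$ is produced by a parallel argument. Perhaps the cleanest route is to first establish that $X \to W$ is itself a $p$-tower map, by combining the pushout description of $\pi_1(W)$ with the $p$-tower property of $X \to M$ and $X \to M'$ at every level; given this, the $p$-structure $\T$ pulls back to a unique $p$-structure $(\{W_k\}, \Phi)$ on $W$, and the compatibility of $\Phi$ with $\phi$, $\phi'$, $\psi$ is then forced by uniqueness of the restrictions along $X \to M$, $X \to M'$, $X \to N$. Alternatively, one can construct $\Phi$ directly by applying the pushout property to the compatible pair $(\phi, \phi')$ (which agree on $\pi_1(X_n)$ via $\theta$), after obtaining a van Kampen description of $\pi_1(W_n)$ by lifting the handle decomposition of $W$ to the cover $W_n$.

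The main technical obstacle lies in controlling $\pi_1(W_k)$ for $k \geq 1$: each of the $[\pi_1(X):\pi_1(X_k)]$ lifts of the single 1-handle of $W$ contributes free generators to the van Kampen decomposition of $\pi_1(W_k)$ (equivalently, these appear as loop edges in the Bass--Serre graph of groups attached to the amalgamated product structure). Careful bookkeeping of these extra generators is required, but because the target groups at each level are abelian, they can be assigned values coherently with the prescribed boundary restrictions from $M_k$, $M_k'$, and $N_k$, so the universal property of pushouts still delivers the required character at each stage.
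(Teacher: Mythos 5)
There is a genuine gap, and it occurs at the very first step of your construction. A $p$-tower $\{X_k\}$ is an \emph{iterated} sequence of regular abelian $p$-covers, and $X_k \to X$ is in general not a regular cover: $\pi_1(X_k)$ need not be normal in $\pi_1(X)$, so there is no quotient group $\pi_1(X)/\pi_1(X_k)$ and no surjection of $\pi_1(X)$ onto a finite $p$-group encoding the tower. Consequently the one-shot construction of $W_k$ --- pushing a single surjection out over $\pi_1(W)\cong\pi_1(M)*_{\pi_1(X)}\pi_1(M')$ and taking ``the associated regular cover'' --- does not exist as described. (Even granting normality on the $X$ side, Lemma~\ref{lemma:p-tower-map-of-surgery-mfd} gives a bijection between $p$-structures, i.e.\ towers built level by level together with characters; it does not by itself produce surjections of $\pi_1(M)$ and $\pi_1(M')$ onto the same finite $p$-group with kernels $\pi_1(M_k)$, $\pi_1(M'_k)$.) Any repair must therefore proceed level by level, which is exactly the situation you defer to your last paragraph, and there the asserted resolution is the missing proof: at stage $k$ the cover $W_k$ is a graph of spaces whose vertex pieces are covers of $M$ and $M'$ with lifted $2$-cells attached along lifts of $\mu_*(x_i^{\vphantom{\prime}})\mu'_*(x_i')^{-1}$, and the prescribed characters on the vertex groups must kill all of these relations simultaneously (and, at the top level, restrict to $\psi$ on $\pi_1(N_n)$). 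Saying ``the targets are abelian, so values can be assigned coherently'' does not address these constraints; the number of lifted $2$-cells exceeds the free generators coming from the lifted $1$-handles, so the extension is a nontrivial compatibility statement, not an application of the pushout universal property. Your fallback, ``show $X\to W$ is a $p$-tower map directly from the pushout description,'' is precisely the hard content and is left unexecuted.

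The paper sidesteps both problems by never working with $\pi_1(W)$ directly: using Lemma~\ref{lemma:algebraic-closure-and-p-tower-map}~(2) (the weak converse involving algebraic closures, applied to the two meridian maps, which induce isomorphisms on $\widehat{\pi_1(-)}$ by Lemma~\ref{lemma:p-tower-map-of-surgery-mfd}) it produces one auxiliary complex $Z$ with $2$-connected maps from $M$ and $M'$ under $X$; the $2$-cells of $W$ then map in, so $W$ and (by the homotopy of the three preferred meridians in $W$) also $N$ map to $Z$ compatibly. Since $X\to Z$ is a $p$-tower map by Lemma~\ref{lemma:algebraic-closure-and-p-tower-map}~(3), the structure $\T$ lifts to a $p$-structure on $Z$, and pulling that back to $M$, $M'$, $N$, and $W$ at once makes all the compatibilities automatic --- the uniqueness argument you invoke at the end (forcing the restrictions to be $(\{M_k\},\phi)$, $(\{M'_k\},\phi')$, $(\{N_k\},\psi)$) is then valid, but only after an actual $p$-structure on $W$ with the right pullback to $X$ has been produced. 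Parts of your outline (the homotopy-type description of $W$, the use of the $p$-tower bijection to pin down the boundary restrictions, the observation that $\beta\cdot\beta'$ is again an $\hF$-string link) do match the paper; what is missing is the existence step that the space $Z$ is introduced to supply.
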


In order to prove Lemma~\ref{lemma:p-tower-bordism}, we need the
following facts which were proved
in~\cite{Levine:1989-1,Cha:2004-1,Cha:2007-1}: for CW-complexes or
groups $A$ and $B$, we say that $f\colon A \to B$ is \emph{2-connected
  with respect to $\Z_{(p)}$-coefficients} if $f$ induces an
isomorphism on $H_1(-;\Z_{(p)})$ and an epimorphism on
$H_2(-;\Z_{(p)})$.  We remark that the defining condition holds for
$\Z_{(p)}$-coefficients if and only if it holds for
$\Z_{p}$-coefficients.  In this section, for convenience we simply say
that a map is 2-connected when it is 2-connected with respect to
$\Z_{(p)}$ (or $\Z_p$)-coefficients.

\begin{lemma}
  \label{lemma:algebraic-closure-and-p-tower-map}
  In what follows $A$, $B$, and $B_i$ are CW-complexes with finite
  2-skeletons.
  \begin{enumerate}
  \item[(0)] For any group $G$, the natural map $G \to \widehat{G}$ is
    2-connected \cite{Levine:1989-1,Cha:2004-1}.
  \item If $f\colon A \to B$ is 2-connected, then $f$ induces an
    isomorphism $\widehat{\pi_1(A)} \to \widehat{\pi_1(B)}$
    \cite{Levine:1989-1,Cha:2004-1}.
  \item As a weak converse to (1), if $\{f_i\colon A \to B_i\}$ is a
    finite collection of maps inducing isomorphisms
    $\widehat{\pi_1(A)} \to \widehat{\pi_1(B_i)}$, then there is a
    2-connected map $g\colon A\to Z=$ (a $K(G,1)$-space with finite
    2-skeleton) such that for each $i$ there is a 2-connected map $B_i
    \to Z$ making the diagram
    \[
    \begin{diagram}
      \node{A} \arrow[2]{e,t}{f_i} \arrow{se,b}{g} \node[2]{B_i}
      \arrow{sw,..}
      \\
      \node[2]{Z}
    \end{diagram}
    \]
    commute \cite[Proof of Proposition~3.9]{Cha:2007-1}.
  \item Any $f\colon A \to B$ inducing an isomorphism
    $\widehat{\pi_1(A)} \to \widehat{\pi_1(B)}$ is a $p$-tower map
    \cite[Proposition~3.9]{Cha:2007-1}.
  \end{enumerate}
\end{lemma}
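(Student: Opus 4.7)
The plan is to prove each of the four parts in sequence, with part (0) being foundational and part (3) being the main technical point. For part (0), I would unfold the iterative construction of $\widehat{G}$: it is a directed colimit of elementary extensions $H \hookrightarrow H'$, each obtained by adjoining solutions to a ``contractible'' (weight-one) system of equations over~$H$. Each elementary step induces an isomorphism on $H_1$ and a surjection on $H_2$ with $\Z_{(p)}$-coefficients, essentially because after abelianization and localization at $p$ the defining weight-one equations become trivially solvable. Since 2-connectedness in the stated sense is preserved under directed colimits, the natural map $G \to \widehat{G}$ is 2-connected.

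For part (1), the argument is by naturality. A 2-connected $f\colon A\to B$ induces a homomorphism $\pi_1(A)\to \pi_1(B)$ that is 2-connected in the group-theoretic sense (via Stallings' theorem in the $\Z_{(p)}$-localized form). Combined with (0) applied to both groups, a diagram chase using the universal property characterizing the algebraic closure yields an isomorphism $\widehat{\pi_1(A)}\to \widehat{\pi_1(B)}$. For part (2), the space $Z$ is constructed as a $K(G,1)$ with $G=\widehat{\pi_1(A)}$; the isomorphisms $\widehat{\pi_1(A)}\cong \widehat{\pi_1(B_i)}$ supplied by each $f_i$ yield maps $B_i\to Z$ making the required diagrams commute, and the 2-connectedness of $A\to Z$ and of each $B_i \to Z$ follows from parts (0) and (1). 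The finite-2-skeleton condition on $Z$ is arranged by starting from a finite presentation of $G$ compatible with the finite 2-skeletons of $A$ and the~$B_i$.

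For part (3), I would show that pullback gives a bijection on $\Z_d$-valued $p$-structures by induction on the height~$n$. The base case reduces to the statement that finite $p$-group quotients of $\pi_1(A)$ and of $\pi_1(B)$ correspond bijectively, which holds because every finite $p$-group is nilpotent and hence every homomorphism from $\pi_1(A)$ or $\pi_1(B)$ to such a group factors uniquely through its $\Z_{(p)}$-algebraic closure. For the inductive step one needs that a single-step finite-$p$-cover of~$A$ corresponds under $f$ to a finite-$p$-cover of~$B$, and that the iso on algebraic closures descends to an iso on the covers' algebraic closures; this is the main obstacle, which I would resolve by observing that a finite-$p$-index subgroup $H<\pi_1(A)$ satisfies $\widehat{H}\subset \widehat{\pi_1(A)}$ as a finite-index subgroup (the closure construction respects finite-$p$-power-index subgroups), so the iso $\widehat{\pi_1(A)}\cong \widehat{\pi_1(B)}$ restricts appropriately and supplies the hypothesis of (1) at the next stage, closing the induction.
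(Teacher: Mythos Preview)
First, note that the paper itself does not prove this lemma: all four parts are quoted from the references, with only the remark that the argument for (2) in \cite{Cha:2007-1} (stated there for a single map) extends verbatim to a finite family. So there is no ``paper's proof'' to compare against beyond those citations.

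Your sketches for (0) and (1) are essentially the arguments in the cited sources and are fine.

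There is a genuine gap in your treatment of (2). You take $G=\widehat{\pi_1(A)}$ and set $Z=K(G,1)$, then assert that a finite $2$-skeleton can be ``arranged by starting from a finite presentation of $G$.'' But the algebraic closure $\widehat{\pi_1(A)}$ is a countable direct limit obtained by adjoining solutions to infinitely many systems of equations; there is no reason it should be finitely presented, and in general it is not. A $K(G,1)$ with finite $2$-skeleton exists exactly when $G$ is finitely presented, so your $Z$ does not have one. The construction in \cite[Proof of Proposition~3.9]{Cha:2007-1} does not take $G$ to be the full algebraic closure; it manufactures an intermediate finitely presented $G$ (so that $Z$ genuinely has finite $2$-skeleton) through which all the relevant maps factor $2$-connectedly. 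Without this, the later uses of (2) in the paper (e.g.\ in Lemma~\ref{lemma:algebraic-closure-of-p-cover}, where one needs the finite $2$-skeleton hypothesis to iterate) would break.

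Your inductive step for (3) also rests on an unjustified claim: that for a finite $p$-power-index subgroup $H<\pi_1(A)$ one has $\widehat{H}\subset\widehat{\pi_1(A)}$ as a finite-index subgroup, so that the isomorphism on closures ``restricts.'' Functoriality of the closure does not give injectivity of $\widehat{H}\to\widehat{\pi_1(A)}$, and the index assertion is not a formal consequence of the construction. The argument actually used (compare Lemma~\ref{lemma:algebraic-closure-of-p-cover} here, which is the heart of the induction) avoids this entirely: one applies (2) to obtain $2$-connected maps $A\to Z\leftarrow B$, passes to $\Gamma$-covers, and invokes Levine's result that $2$-connectedness with $\Z_{(p)}$-coefficients is preserved under abelian $p$-covers; then (1) gives the isomorphism on closures at the next level. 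Your proposed shortcut bypasses exactly the place where the finite $2$-skeleton of $Z$ and Levine's covering lemma do the work.
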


We remark that although \cite[Proof of Proposition~3.9]{Cha:2007-1}
discusses a special case of Lemma
\ref{lemma:algebraic-closure-and-p-tower-map} (2) that the family
$\{f_i\}$ consists of only one map, exactly the same argument proves
the above generalized case.

\begin{proof}[Proof of Lemma~\ref{lemma:p-tower-bordism}]
  Let $X'=X$ and $\mu \colon X\to E_\beta \to M$ and $\mu'\colon X'
  \to E_{\beta'} \to M'$ be the preferred meridian maps of $\beta$
  and~$\beta'$. By Lemma~\ref{lemma:p-tower-map-of-surgery-mfd}, $\mu$
  and $\mu'$ induce isomorphisms $\widehat{\pi_1(X)} \to
  \widehat{\pi_1(M)}$ and $\widehat{\pi_1(X')} \to
  \widehat{\pi_1(M')}$.  By
  Lemma~\ref{lemma:algebraic-closure-and-p-tower-map}~(2), it follows
  that there are 2-connected maps of $M$, $M'$ into a CW-complex $Z$
  with finite 2-skeleton making the following diagram commute:
  \[
  \begin{diagram} \dgARROWLENGTH=1.6em \dgHORIZPAD=5pt
    \node{X} \arrow{e,t}{\mu} \arrow[2]{s,=}
    \node{M} \arrow{s}\arrow{see}
    \\
    \node[2]{M\vee M'}\arrow{e} \node{W} \arrow{e,..}\node{Z}
    \\
    \node{X'} \arrow{e,t}{\mu'}
    \node{M'} \arrow{n}\arrow{nee}
  \end{diagram}
  \]
  Note that $W$ has the homotopy type of
  \[
  \big( M \vee M' \big) \cup \big(\text{$m$ 2-disks attached along
    $\mu_*(x_i^{\vphantom{\prime}})\mu'_*(x_i')^{-1}$} \big)
  \]
  where $x_i$ and $x_i'$ denote the paths representing the $i$th
  circle in $X$ and $X'$, respectively.  It follows that there is a
  map $W \to Z$ making the above diagram commute.

  Let $\mu''\colon X'' \to N$ be the preferred meridian map of
  $\beta\cdot\beta'$, where $X''=X$.  It can be seen from
  Figure~\ref{figure:surgery-manifold-cobordism} that the preferred
  meridians of $\beta$, $\beta'$, and $\beta\cdot\beta'$ are homotopic
  in~$W$.  Therefore, we have the following commutative diagram which
  extends the above one:
  \[
  \begin{diagram} \dgARROWLENGTH=1.6em \dgHORIZPAD=5pt
    \node{X} \arrow{e,t}{\mu} \arrow[2]{s,=}
    \node{M} \arrow{s}\arrow{see}
    \\
    \node[2]{M\vee M'}\arrow{e} \node{W}\arrow{e} \node{Z}
    \\
    \node{X'} \arrow{e,t}{\mu'} \arrow{s,=}
    \node{M'} \arrow{n}
    \\
    \node{X''} \arrow{e,t}{\mu''}
    \node{N} \arrow{nne}
  \end{diagram}
  \]
  
  By Lemma~\ref{lemma:algebraic-closure-and-p-tower-map}~(3), the
  composition map $X \to Z$ is a $p$-tower map.  So, given
  $\T=(\{X_k\},\theta)$, there is a $p$-tower $\{Z_k\}$ for $Z$ and a
  character of $\pi_1(Z_n)$ inducing $\{X_k\}$ and $\theta$ via
  pullback.  Now, by their definition, $\{M_k\}$, $\{M'_k\}$, and
  $\{N_k\}$ are identical with the pullback $p$-towers induced by
  $\{Z_k\}$, and similarly for characters.  It follows that the
  $p$-tower $\{W_k\}$ for $W$ induced by $\{Z_k\}$ and the character
  of $\pi_1(W_n)$ induced by that of $\pi_1(Z_n)$ have the claimed
  properties.
\end{proof}

Let $\{(M\cup M')_k\}$ and $\{(M\vee M')_k\}$ be the pullback
$p$-towers of $M\cup M'$ and $M\vee M'$ determined by $\{Z_k\}$ in the
above proof.

\begin{lemma}
  \label{lemma:triviality-of-intersection-form}
  $H_2((M\cup M')_k;\Z_{(p)}) \to H_2(W_k;\Z_{(p)})$ is surjective for
  all~$k$.
\end{lemma}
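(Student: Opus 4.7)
The plan is to work with the long exact sequence of the pair $(W_k,(M\cup M')_k)$ in $\Z_{(p)}$-coefficients, exploiting the two-stage construction of $W$ as a $1$-handle attachment followed by $m$ $2$-handle attachments.

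First I would reduce to the analogous statement for $(M\vee M')_k$. The natural map $H_2((M\cup M')_k;\Z_{(p)}) \to H_2((M\vee M')_k;\Z_{(p)})$ is in fact an isomorphism, since $(M\vee M')_k$ is homotopy equivalent to $(M\cup M')_k$ with one $1$-cell attached for each pair of basepoint lifts lying over a common lift of the wedge point (collapsing those $1$-cells furnishes the equivalence), and attaching $1$-cells preserves $H_i$ for $i \geq 2$. It therefore suffices to prove surjectivity of $H_2((M\vee M')_k;\Z_{(p)}) \to H_2(W_k;\Z_{(p)})$.

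For this, note that $W$ is built from $M\vee M'$ by attaching only $m$ $2$-cells along $\alpha_i = \mu_*(x_i)\mu'_*(x'_i)^{-1}$, so the relative cellular chain complex $C_*((W_k,(M\vee M')_k);\Z_{(p)})$ is concentrated in degree~$2$, where it is the free $\Z_{(p)}[\pi/H]$-module of rank $m$ with $\pi = \pi_1(W)$, $H = \pi_1(W_k)$. In particular $H_3(W_k,(M\vee M')_k;\Z_{(p)})=0$, and the long exact sequence identifies the desired surjectivity with the injectivity of the connecting homomorphism
\[
\partial\colon \Z_{(p)}[\pi/H]^m \longrightarrow H_1((M\vee M')_k;\Z_{(p)})
\]
sending each generator to the homology class of the corresponding lift of~$\alpha_i$.

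The main obstacle is establishing this injectivity. My plan is to use the Mayer-Vietoris sequence for the decomposition $(M\vee M')_k = p^{-1}(M) \cup p^{-1}(M')$, whose intersection is the discrete set of lifts of the wedge point. Each lift $\tilde\alpha_i$ splits at the chain level as $\tilde\mu_*(\tilde x_i) - \tilde\mu'_*(\tilde x'_i)$, with the two pieces lying respectively in $p^{-1}(M)$ and $p^{-1}(M')$. Because $\mu\colon X\to M$ and $\mu'\colon X'\to M'$ are $p$-tower maps by Lemma~\ref{lemma:p-tower-map-of-surgery-mfd}, the translates $g\cdot\tilde\mu_*(\tilde x_i)$ for $g\in\pi/H$ faithfully track independent cellular $1$-chains in $p^{-1}(M)$, and similarly on the $M'$-side. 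A chain-level analysis of the Mayer-Vietoris splitting then shows that any nontrivial $\Z_{(p)}[\pi/H]$-linear combination of the $\tilde\alpha_i$'s has a nonzero representative in $H_1((M\vee M')_k;\Z_{(p)})$. The subtlety requiring care is that the $\tilde\mu_*(\tilde x_i)$ are $1$-chains rather than $1$-cycles (their boundaries lie in the discrete gluing locus), so the decomposition has to be applied at the chain level with proper tracking of the boundary components in the gluing set.
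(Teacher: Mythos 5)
Your reduction is sound and is essentially the paper's: you pass from $(M\cup M')_k$ to $(M\vee M')_k$, observe that $W_k$ is $(M\vee M')_k$ with $2$-cells attached along lifts of the curves $\mu_*(x_i^{\vphantom{\prime}})\mu'_*(x_i')^{-1}$, and identify the desired surjectivity with injectivity of the map sending the free $\Z_{(p)}$-module on these lifted attaching circles into $H_1((M\vee M')_k;\Z_{(p)})$ (the paper phrases this as injectivity of $H_1(Y_k;\Z_{(p)})\to H_1((M\vee M')_k;\Z_{(p)})$ in a Mayer--Vietoris sequence, which is the same step). The problem is that you stop exactly where the real work begins. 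The assertion that the translates $g\cdot\tilde\mu_*(\tilde x_i)$ ``faithfully track independent cellular $1$-chains'' is vacuous: linear independence at the chain level is automatic and gives no homological information. What must be ruled out is that a nontrivial $\Z_{(p)}$-combination of the classes of the lifted attaching loops is a boundary in $(M\vee M')_k$, i.e.\ lies in the image of $\partial_2$ coming from the $2$-chains of the covers of $M$ and $M'$. Nothing in your chain-level Mayer--Vietoris bookkeeping addresses this, and it cannot be addressed without some homological control of $H_1$ of those covers; having ``a nonzero chain representative'' never implies a nonzero homology class.

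This is precisely where the $\hF$-string link hypothesis must enter, and your proposal never uses it beyond citing Lemma~\ref{lemma:p-tower-map-of-surgery-mfd}. The $p$-tower map property by itself is a statement about the correspondence of finite $p$-group characters; to convert it into the needed statement that the lifted meridional classes remain independent in $H_1(-;\Z_{(p)})$ of the covers, the paper factors $Y\to M\vee M'$ through $Y\vee Y'\to X\vee X'\xrightarrow{\mu\vee\mu'} M\vee M'$ and shows the last two maps induce isomorphisms on $H_1(-;\Z_{(p)})$ of all the pullback covers, using Lemma~\ref{lemma:algebraic-closure-and-p-tower-map} together with Lemma~\ref{lemma:algebraic-closure-of-p-cover} (Levine's theorem that $2$-connectedness with respect to $\Z_{(p)}$-coefficients persists under finite abelian $p$-covers). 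Note also that the relevant map is $\mu\vee\mu'\colon X\vee X'\to M\vee M'$, which is not itself one of the meridian maps covered by Lemma~\ref{lemma:p-tower-map-of-surgery-mfd}; the paper handles it by mapping both sides $2$-connectedly into $Z\vee Z$. Until you supply an argument of this kind (or some equivalent input identifying $H_1$ of the covers of $M$ and $M'$ with that of the covers of $X$ and $X'$ over $\Z_{(p)}$), the injectivity of your connecting homomorphism $\partial$ --- and hence the lemma --- is not established.
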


We remark that our argument below also shows the conclusion for
$\Z_p$-coefficients.

\begin{proof}
  Since $(M\vee M')_k$ has the homotopy type of $(M\cup M')_k \cup$
  (1-cells),
  \[
  H_2((M\cup M')_k;\Z_{(p)}) \to H_2((M\vee M')_k;\Z_{(p)})
  \]
  is surjective by the long exact sequence of the pair $((M\vee M')_k,
  (M\cup M')_k)$.  Therefore, it suffices to show that
  \[
  H_2((M\vee M')_k;\Z_{(p)}) \to H_2(W_k;\Z_{(p)})
  \]
  is surjective.  Let $Y=\bigvee^m S^1$, and let $Y \to X\vee X'$ be
  the map sending the $i$th circle $y_i$ of $Y$ to
  $x_i^{\vphantom{\prime}}(x_i')^{-1}$.  (Recall that $x_i$ and $x_i'$
  represent the $i$th circles of $X$ and $X'$, respectively.)  Since
  $W$ has the homotopy type of
  \[
  (M\vee M') \cup \bigg(\smash[b]{
  \begin{tabular}{cc}
    $m$ 2-disks attached along the image of $y_i$ under \\
    $Y \to X\vee X' \smash{\xrightarrow[\mu\vee \mu']{}} M\vee M'$
  \end{tabular}
  }\bigg)
  \]
  we can see that $W_k$ has the homotopy type of
  \[
  (M\vee M')_k \cup \bigg(\smash[b]{
  \begin{tabular}{cc}
    2-disks attached along the image of lifts of $y_i$ under \\
    $Y_k \to (X\vee X')_k \to (M\vee M')_k$
  \end{tabular}
  }\bigg)
  \]  
  where $\{(X\vee X')_k\}$ and $\{Y_k\}$ are the pullback $p$-towers
  of $X\vee X'$ and $Y$ determined by $\{Z_k\}$, respectively.  (In
  fact, $Y_k$ is a disjoint union of copies of $Y$ since $Y \to Z$
  factors through $W$ and $\pi_1(Y) \to \pi_1(W)$ is trivial.)  From
  this we obtain a Mayer-Vietoris exact sequence
  \[
  H_2((M\vee M')_k;\Z_{(p)}) \to H_2(W_k;\Z_{(p)}) \to
  H_1(Y_k;\Z_{(p)}) \to H_1((M\vee M')_k;\Z_{(p)}).
  \]
  We will complete the proof by showing that the rightmost map on
  $H_1(-;\Z_{(p)})$ is injective.  For this purpose, we factor $Y \to
  M\vee M'$ into a composition of 3 maps:
  \[
  Y \xrightarrow{\iota} Y\vee Y' \xrightarrow{\alpha} X \vee X'
  \xrightarrow{\mu\vee \mu'} M\vee M'
  \]
  where $Y'=Y$, the first map $\iota$ is an obvious inclusion into the
  first factor, and the second map $\alpha$ is defined by $y_i \mapsto
  x_i^{\vphantom{\prime}}(x_i')^{-1}$ and $y_i' \mapsto x_i'$ (here
  $y_i'$ represents the $i$th circle of~$Y'$).  We investigate the
  induced maps on pullback $p$-towers $\{Y_k\}$, $\{(Y\vee Y')_k\}$,
  $\{(X\vee X')_k\}$, and $\{(M\vee M')_k\}$ determined by $\{Z_k\}$:
  \begin{enumerate}
  \item $\iota$ induces an injection
    \[
    H_1(Y_k;\Z_{(p)}) \to H_1((Y\vee Y')_k;\Z_{(p)})
    \]
    since $Y_k \to (Y\vee Y')_k$ is an embedding between 1-complexes,
    being the lift of an embedding~$\iota$.
  \item $\alpha$ induces an isomorphism
    \[
    H_1((Y\vee Y')_k;\Z_{(p)}) \to H_1((X\vee X')_k;\Z_{(p)}).
    \]
    For, $\alpha$ induces an isomorphism on $\widehat{\pi_1(-)}$ by
    Lemma~\ref{lemma:algebraic-closure-and-p-tower-map}~(1), since
    $\alpha$ is an $H_1$-isomorphism and $H_2(X\vee X';\Z_{(p)})=0$.  By
    Lemma~\ref{lemma:algebraic-closure-of-p-cover} stated below, it
    follows that
    \[
    (Y \vee Y')_k \to (X\vee X')_k
    \]
    induces an isomorphism on $\widehat{\pi_1(-)}$.  From this the
    claim follows, by
    Lemma \ref{lemma:algebraic-closure-and-p-tower-map}~(0).
  \item $\mu\vee\mu'$ induces an isomorphism
    \[
    H_1((X\vee X')_k;\Z_{(p)}) \to H_1((M\vee M')_k;\Z_{(p)}).
    \]
    For, for the CW-complex $Z$ in the proof of the previous lemma, we
    have a commutative diagram
    \[
    \begin{diagram}
      \node{X\vee X'} \arrow[2]{e,t}{\mu\vee\mu'} \arrow{se}
      \node[2]{M\vee M'} \arrow{sw}
      \\
      \node[2]{Z\vee Z}
    \end{diagram}
    \]
    with maps into $Z\vee Z$ 2-connected.  Applying
    Lemma~\ref{lemma:algebraic-closure-and-p-tower-map}~(1), it
    follows that $\mu\vee\mu'$ induces an isomorphism on
    $\widehat{\pi_1(-)}$.  Now the claim is shown by
    Lemma~\ref{lemma:algebraic-closure-of-p-cover} and
    Lemma~\ref{lemma:algebraic-closure-and-p-tower-map}~(0) as we did
    above.
  \end{enumerate}    
  Combining (1), (2), and (3), it follows that
  \[
  H_1(Y_k;\Z_{(p)}) \to H_1((M\vee M')_k;\Z_{(p)})
  \]
  is injective.
\end{proof}

\begin{lemma}
  \label{lemma:algebraic-closure-of-p-cover}
  Suppose $A$ and $B$ are CW-complexes with finite 2-skeletons and $A
  \to B$ is a map inducing an isomorphism on $\widehat{\pi_1(-)}$.
  Suppose $\pi_1(B) \to \Gamma$ is a map into an abelian $p$-group
  $\Gamma$, and $A_\Gamma$ and $B_\Gamma$ are covers of $A$ and $B$,
  respectively, induced by $\pi_1(A) \to \pi_1(B) \to \Gamma$.  Then
  the lift $A_\Gamma \to B_\Gamma$ induces an isomorphism on
  $\widehat{\pi_1(-)}$.  Consequently, for any $p$-tower $\{B_k\}$ of
  $B$ and the pullback $p$-tower $\{A_k\}$ of $A$, $A_k \to B_k$
  induces an isomorphism on $\widehat{\pi_1(-)}$.
\end{lemma}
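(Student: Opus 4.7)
The plan is to reduce the claimed isomorphism on $\widehat{\pi_1(-)}$ to a 2-connectedness statement by introducing a common target via Lemma~\ref{lemma:algebraic-closure-and-p-tower-map}(2), and then invoking a key sub-lemma saying that taking pullback $\Gamma$-covers preserves 2-connectedness with $\Z_{(p)}$-coefficients when $\Gamma$ is a finite abelian $p$-group. First, I apply Lemma~\ref{lemma:algebraic-closure-and-p-tower-map}(2) to the given map $A\to B$ to obtain a $K(G,1)$-space $Z$ with finite 2-skeleton together with 2-connected maps $g\colon A\to Z$ and $h\colon B\to Z$ making the triangle commute. I extend $\pi_1(B)\to\Gamma$ along $h_*$ to a character $\pi_1(Z)\to\Gamma$: since $\Gamma$ is an abelian $p$-group, maps from $\pi_1(-)$ into $\Gamma$ factor through $H_1(-;\Z_{(p)})$, on which $h$ induces an isomorphism by 2-connectedness. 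Letting $Z_\Gamma\to Z$ be the resulting cover, by naturality one has $A_\Gamma=A\times_Z Z_\Gamma$ and $B_\Gamma=B\times_Z Z_\Gamma$.

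The heart of the argument is the following sub-lemma: if $f\colon X\to Y$ is 2-connected with $\Z_{(p)}$-coefficients (equivalently, $\Z_p$-coefficients) and $\tilde Y\to Y$ is a regular $\Gamma$-cover with $\Gamma$ a finite abelian $p$-group, then the pullback $\tilde X\to\tilde Y$ is also 2-connected with $\Z_{(p)}$-coefficients. To prove it, apply Shapiro's lemma to write $H_*(\tilde X;\Z_p)=H_*(X;\Z_p[\Gamma])$ and $H_*(\tilde Y;\Z_p)=H_*(Y;\Z_p[\Gamma])$, where $\Z_p[\Gamma]$ is viewed as a $\Z_p[\pi_1(-)]$-module through the given map to $\Gamma$. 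Since $\Gamma$ is a finite abelian $p$-group, $\Z_p[\Gamma]$ is a local ring whose augmentation ideal $I$ is nilpotent, and $\Gamma$ acts trivially on each successive quotient $I^k/I^{k+1}$ because $(\gamma-1)\cdot I^k\subseteq I^{k+1}$. Consequently each $I^k/I^{k+1}$ is a trivial $\pi_1$-module, that is, a finite direct sum of copies of $\Z_p$, with respect to which $f$ is 2-connected by hypothesis. Inducting on $k$ downward from $I^N=0$ using the long exact sequences of $0\to I^{k+1}\to I^k\to I^k/I^{k+1}\to 0$, one concludes that $f$ is 2-connected with $\Z_p[\Gamma]$-coefficients, proving the sub-lemma.

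Applying the sub-lemma to both $g$ and $h$ gives 2-connected maps $A_\Gamma\to Z_\Gamma$ and $B_\Gamma\to Z_\Gamma$, and Lemma~\ref{lemma:algebraic-closure-and-p-tower-map}(1) then shows each induces an isomorphism on $\widehat{\pi_1(-)}$. Commutativity of the pullback triangle $A_\Gamma\to B_\Gamma\to Z_\Gamma$ forces $A_\Gamma\to B_\Gamma$ to do the same, proving the first assertion. For the final statement I would induct on the height $k$: the base case is the hypothesis, and the inductive step applies the first assertion to $A_k\to B_k$ together with the finite abelian $p$-group cover $B_{k+1}\to B_k$ (whose pullback to $A_k$ is $A_{k+1}$ by definition of a pullback $p$-tower), giving the desired isomorphism at level $k+1$. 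The main obstacle is the sub-lemma; the Shapiro-plus-augmentation-filtration argument uses the $p$-group hypothesis essentially (the trivial action on the successive quotients of the augmentation filtration fails for general finite $\Gamma$), which is precisely why iterated $p$-covers are the correct setting here.
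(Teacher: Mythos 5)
Your proposal is correct and follows the paper's proof in essentially the same way: introduce the common 2-connected target $Z$ via Lemma~\ref{lemma:algebraic-closure-and-p-tower-map}~(2), extend the character to $\pi_1(Z)$ using the induced isomorphism on $p$-local $H_1$, show the maps of $\Gamma$-covers into $Z_\Gamma$ remain 2-connected, and conclude with Lemma~\ref{lemma:algebraic-closure-and-p-tower-map}~(1), finishing the $p$-tower statement by the evident induction on height. The only difference is that where the paper quotes Levine's result (and \cite[Lemma 3.2, 3.3]{Cha:2007-1}) for the key fact that 2-connectedness with $\Z_p$-coefficients is preserved under finite abelian $p$-covers, you prove that fact directly via Shapiro's lemma and the nilpotent augmentation filtration of $\Z_p[\Gamma]$, which is exactly the content of the cited argument.
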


\begin{proof}
  By Lemma~\ref{lemma:algebraic-closure-and-p-tower-map} (2), there is
  a commutative diagram
  \[
  \begin{diagram}
    \node{A} \arrow[2]{e} \arrow{se} 
    \node[2]{B} \arrow{sw}
    \\
    \node[2]{Z}
  \end{diagram}
  \]
  where $Z$ has finite 2-skeleton and vertical maps are 2-connected.
  Since $H_1(B;\Z_{p^r}) \cong H_1(Z;\Z_{p^r})$ for all $r$, the given
  $\pi_1(B) \to \Gamma$ factors through~$\pi_1(Z)$.  So, taking
  $\Gamma$-covers $A_\Gamma$, $B_\Gamma$, and $Z_\Gamma$ of $A$, $B$,
  and $Z$, we obtain a commutative diagram
  \[
  \begin{diagram}
    \node{A_\Gamma} \arrow[2]{e} \arrow{se} 
    \node[2]{B_\Gamma} \arrow{sw}
    \\
    \node[2]{Z_\Gamma}
  \end{diagram}.
  \]
  Since $\Gamma$ is a $p$-group and the maps into $Z$ in the previous
  diagram are 2-connected, the maps into $Z_\Gamma$ in this diagram
  are 2-connected too, by Levine's result \cite[Proof of Proposition
  3.2]{Levine:1994-1}.  (Refer to \cite[Lemma 3.2 and
  3.3]{Cha:2007-1}, \cite[Corollary 4.13]{Cochran-Harvey:2007-01} for
  statements that apply to our case directly.  See also
  \cite[Corollary 4.13]{Cochran-Harvey:2007-01}.)  By
  Lemma~\ref{lemma:algebraic-closure-and-p-tower-map} (1), it follows
  that $\widehat{\pi_1(A_\Gamma)} \cong \widehat{\pi_1(Z_\Gamma)}
  \cong \widehat{\pi_1(B_\Gamma)}$.
\end{proof}

Now we are ready to prove
Theorem~\ref{theorem:additivity-of-lambda_T}.

\begin{proof}[Proof of Theorem~\ref{theorem:additivity-of-lambda_T}]
  Suppose two $\hF$-string links $\beta$, $\beta'$ and a $p$-structure
  $\T=(\{X_k\},\theta)$ for $X$ are given.  Recall our notation:
  $(\{M_k\},\phi)$, $(\{M_k'\},\phi')$, $(\{N_k\},\psi)$ are the
  $p$-structures of surgery manifolds of $\beta$, $\beta'$,
  $\beta\cdot\beta'$ which are determined by the given $p$-structure
  $\T$, respectively.

  We need to show that
  $\lambda(M_n,\phi)+\lambda(M_n',\phi')=\lambda(N_n,\psi)$.  By
  Lemma~\ref{lemma:p-tower-bordism}, there is a bordism $W_n$ endowed
  with a character $\pi_1(W_n) \to \Z_d$ between $(\M_n,\phi) \cup
  (M_n',\phi')$ and $(N_n,\psi)$.  By the definition of $\lambda(-,-)$
  (see Definition 2.2 in~\cite{Cha:2007-1}), we have
  \[
  \lambda(M_n,\phi)+\lambda(M_n',\phi') - \lambda(N_n,\psi) =
  [\lambda_{\Q(\zeta_d)}(W_n)]-[\lambda_\Q(W_n)]
  \]
  where $[\lambda_{\Q(\zeta_d)}(W_n)]$ is the Witt class of (the
  nonsingular part of) the $\Q(\zeta_d)$-valued intersection form on
  $H_2(W_n;\Q(\zeta_d))$, and $[\lambda_{\Q}(W_n)]$ is the Witt class
  of (the nonsingular part of) the ordinary intersection form on
  $H_2(W_n;\Q)$.

  By Lemma~\ref{lemma:triviality-of-intersection-form},
  \[
  H_2((M\cup M)_n;\Z_{(p)}) \to H_2(W_n;\Z_{(p)})
  \]
  is surjective, and so is for
  $\Q$-coefficients.  It follows that $[\lambda_\Q(W_n)]=0$.

  Let $(M\cup M')_{n+1}$ be the $\Z_d$-cover of $(M\cup M')_n$
  determined by $\phi$ and $\phi'$, and denote the $\Z_d$-cover of
  $W_n$ by $W_{n+1}$ similarly.  Note that
  $H_2(W_n;\Q[\Z_d])=H_2(W_{n+1};\Q)$ and similarly for $M\cup M'$.
  So, applying Lemma~\ref{lemma:triviality-of-intersection-form} for
  $k=n+1$,
  \[
  H_2((M\cup M)_n;\Q[\Z_d]) \to H_2(W_n;\Q[\Z_d])
  \]
  is surjective.  Since $\Q(\zeta_d)$ is $\Q[\Z_d]$-flat,
  \[
  H_*(-;\Q(\zeta_d)) = H_*(-;\Q[\Z_d])\otimes_{\Q[\Z_d]}\Q(\zeta_d)
  \]
  and thus the surjectivity on $H_2$ holds for
  $\Q(\zeta_d)$-coefficients as well as~$\Q[\Z_d]$.  It follows that
  $[\lambda_{\Q(\zeta_d)}(W_n)]=0$.
\end{proof}

\section{Obstructions to being $(n.5)$-solvable}
\label{section:obstruction-to-n.5-solv}

In this section we prove that the Hirzebruch type invariants of
$\hF$-links give obstructions to being $(n.5)$-solvable, sharpening
the results on $(n+1)$-solvability in~\cite{Cha:2007-1}.  Our result
is best described in terms of the following $p$-analogue of the
integral (or rational) $(h)$-solvability.  Denote the $n$th lower
central subgroup of a group $G$ by $G^{(n)}$, i.e., $G^{(0)}=G$ and
$G^{(n+1)} = [G^{(n)}, G^{(n)}]$.

\begin{definition}
  \label{definition:Zp-coefficient-solution}
  Suppose $M$ is a closed 3-manifold.  A 4-manifold $W$ bounded by $M$
  is called a \emph{$\Z_{(p)}$-coefficient $(n)$-solution of $M$} if
  the following holds:
  \begin{enumerate}
  \item[(1)] $W$ is a $\Z_{(p)}$-coefficient $H_1$-bordism, i.e.,
    $H_1(M;\Z_{(p)}) \to H_1(W;\Z_{(p)})$ is an isomorphism.
  \item[(2)] There exist $u_1,\ldots, u_r, v_1,\ldots,v_r \in
    H_2(W;\Z_{(p)}[\pi/\pi^{(n)}])$, where $\pi=\pi_1(W)$ and
    $r=\frac{1}{2}\beta_2(W)$, such that the
    $\Z_{(p)}[\pi/\pi^{(n)}]$-valued intersection form
    $\lambda_W^{(n)}$ on $H_2(W;\Z_{(p)}[\pi/\pi^{(n)}])$ satisfies
    $\lambda_W^{(n)}(u_i,u_j)=0$ and
    $\lambda_W^{(n)}(u_i,v_j)=\delta_{ij}$.
  \end{enumerate}
  If, in addition, the following holds, then $W$ is called a
  \emph{$\Z_{(p)}$-coefficient $(n.5)$-solution of~$M$}:
  \begin{enumerate}
  \item[(3)] There exist $\tilde u_1,\ldots, \tilde u_r \in
    H_2(W;\Z_{(p)}[\pi/\pi^{(n+1)}])$ such that
    $\lambda_W^{(n+1)}(\tilde u_i, \tilde u_j)=0$ and $u_i$ is the image of
    $\tilde u_i$.
  \end{enumerate}
  If there is a $\Z_{(p)}$-coefficient $(h)$-solution of $M$ ($h\in
  \frac{1}{2}\Z_{\ge 0}$), then $M$ is said to be
  \emph{$\Z_{(p)}$-coefficient $(h)$-solvable}.  A (string) link is
  called \emph{$\Z_{(p)}$-coefficient $(h)$-solvable} if the surgery
  manifold (of its closure) is $\Z_{(p)}$-coefficient $(h)$-solvable.
\end{definition}

Obviously an integral $(h)$-solution defined
in~\cite{Cochran-Orr-Teichner:1999-1} is a $\Z_{(p)}$-coefficient
$(h)$-solution.  In this section, as an abuse of terminology, an
$(h)$-solution always designates a $\Z_{(p)}$-coefficient
$(h)$-solution, and similarly for an $(h)$-solvable 3-manifold.

For a given $\phi\colon \pi_1(M) \to \Z_{p^a}$, denote
$\Gamma_j=\Z_{p_j}$ ($j=0,1,\ldots,a$) and let $M_{\Gamma_j}$ be the
cover of $M$ determined by
\[
\pi_1(M) \xrightarrow{\phi} \Z_{p^a} \xrightarrow{\text{proj.}}
\Z_{p^a}/p^j\Z_{p^a}=\Gamma_j.
\]

\begin{theorem}
  \label{theorem:n.5-solvaility-obstruction}
  Suppose $(\{M_i\},\phi)$ is a $p$-structure of height $n$ for~$M$.
  If $M$ is $(n.5)$-solvable, $H_1(M_i)$ is $p$-torsion free for all
  $i$, and $\beta_1((M_n)_{\Gamma_j})-1=|\Gamma_j| (\beta_1(M_n)-1)$
  for $j=0,1,\ldots,a$, then $\lambda(M_n,\phi)$ is well-defined and
  vanishes.
\end{theorem}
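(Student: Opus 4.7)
The plan has three substantive steps: well-definedness, bounding, and Witt-triviality.

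First, for well-definedness: since $H_1(M_n)$ is $p$-torsion free and $d$ is a power of $p$, the character $\phi\colon\pi_1(M_n)\to\Z_d$ factors through $H_1(M_n)/\text{torsion}$ and hence through the projection $\Z\to\Z_d$; as in the proof of Lemma~\ref{lemma:well-definedness-of-lambda_T}, this forces $(M_n,\phi)\in\Omega^{top}_3(B\Z_d)$ to lie in the image of $\Omega^{top}_3(B\Z)=0$, so $\lambda(M_n,\phi)$ is defined.

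Second, I bound $(M_n,\phi)$ by a compatible 4-manifold. Let $W$ be a $\Z_{(p)}$-coefficient $(n.5)$-solution of $M$. I lift the $p$-tower $\{M_k\}$ to a $p$-tower $\{W_k\}$ of $W$ with $M_k$ appearing as a boundary component of $W_k$, and I extend $\phi$ to a character $\Phi\colon\pi_1(W_n)\to\Z_d$. The inductive step uses that the $\Z_{(p)}$-coefficient $H_1$-bordism property is preserved under passage to abelian $p$-covers, which is the covering-solution phenomenon developed systematically later in the paper; this guarantees that each abelian $p$-group character on $\pi_1(M_k)$ extends uniquely across $\pi_1(W_k)$. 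By the definition of $\lambda$ (see \cite[Definition 2.2]{Cha:2007-1}),
\[
\lambda(M_n,\phi)=[\lambda_{\Q(\zeta_d)}(W_n)]-[\lambda_\Q(W_n)]\in L^0(\Q(\zeta_d)),
\]
so it suffices to show that both Witt classes on the right vanish.

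Third, I exhibit the Lagrangians. Because each quotient in the $p$-tower is abelian, $\pi_1(W)^{(n)}\subset\pi_1(W_n)$; moreover $\pi_1(W)^{(n+1)}\subset\ker\Phi$, since $\Phi$ restricted to $\pi_1(W)^{(n)}\subset\pi_1(W_n)$ lands in the abelian group $\Z_d$. Consequently $W_n$ is covered by the $\pi_1(W)/\pi_1(W)^{(n)}$-cover of $W$, and the $\Phi$-twisted cover $\widetilde{W_n}$ is covered by the $\pi_1(W)/\pi_1(W)^{(n+1)}$-cover. Pushing down the Lagrangian $\{u_i\}\subset H_2(W;\Z_{(p)}[\pi_1(W)/\pi_1(W)^{(n)}])$ supplied by the $(n)$-solution structure yields a subspace of $H_2(W_n;\Q)$ which is automatically isotropic under the ordinary intersection form; pushing down the lifts $\{\tilde u_i\}\subset H_2(W;\Z_{(p)}[\pi_1(W)/\pi_1(W)^{(n+1)}])$ supplied by the $(n.5)$-solution structure and then projecting through the $\Z_d$-eigenspace decomposition yields a subspace of $H_2(W_n;\Q(\zeta_d))$ isotropic under the $\Q(\zeta_d)$-twisted intersection form. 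If these subspaces are full Lagrangians, both forms are metabolic and both Witt classes vanish.

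The principal obstacle is the half-rank verification: showing that the images of the $u_i$ and $\tilde u_i$ span exactly $\tfrac12\dim_\Q H_2(W_n;\Q)$ and $\tfrac12\dim_{\Q(\zeta_d)} H_2(W_n;\Q(\zeta_d))$ respectively. The Betti number hypothesis $\beta_1((M_n)_{\Gamma_j})-1=|\Gamma_j|(\beta_1(M_n)-1)$ for $j=0,\ldots,a$ is inserted precisely for this: combined with Euler characteristic identities for the $\Gamma_j$-covers of the pair $(W_n,M_n)$ and Poincar\'e--Lefschetz duality, it pins down the relevant dimensions of $H_2(W_n;\cdot)$ in terms of rank data controlled by the $(n.5)$-solution. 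A Mayer--Vietoris style surjectivity argument on $H_2$ (compare Lemma~\ref{lemma:triviality-of-intersection-form}) then certifies that the isotropic images exhaust these dimensions, completing the proof.
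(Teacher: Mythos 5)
Your first two steps are fine and match the paper's route (the well-definedness argument is the same factoring-through-$\Z$ bordism argument, and computing $\lambda(M_n,\phi)$ from the iterated cover $W_n$ of an $(n.5)$-solution, with isotropic subspaces pushed down from the $u_i$ and $\tilde u_i$, is exactly the strategy the paper packages as the Covering Solution Theorem followed by the $(0.5)$-solvability case). The genuine gap is precisely at the point you defer: the half-rank verification is not a bookkeeping step that ``Euler characteristic identities, Poincar\'e--Lefschetz duality, and a Mayer--Vietoris style surjectivity argument'' settle — it is the mathematical core of the theorem, and the mechanism you point to would fail. Lemma~\ref{lemma:triviality-of-intersection-form} is a statement about the special standard cobordism between surgery manifolds used in the additivity proof, where $H_2$ of the boundary surjects onto $H_2$ of the cobordism because the cobordism is built from the boundary by adding cells along curves with controlled $H_1$; for an arbitrary $(n.5)$-solution $W$ there is no such surjectivity (if there were, the entire intersection form of $W_n$ would be Witt-trivial with no use of the solution data at all), so ``compare Lemma~\ref{lemma:triviality-of-intersection-form}'' cannot certify that your isotropic images are half-dimensional.

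What is actually needed, and what you have not supplied, is the following chain. First, the dual classes $v_i$ (and their translates under coset representatives, via the covering formula of Lemma~\ref{lemma:intersection-of-cover}) are indispensable: they force the pushed-down isotropic classes to remain linearly independent and, together with the bound $\beta_2(W_\Gamma)\le|\Gamma|\beta_2(W)$, pin down $\beta_2$ of the covers — this is also the content of the descent step (Theorem~\ref{theorem:covering-solution}) that you cite as a black box, and it is where the hypothesis that every $H_1(M_i)$ is $p$-torsion free enters, since otherwise the $H_1$-bordism property need not persist and the characters need not extend. Second, for the twisted form one needs a $p$-group analogue of the Cochran--Harvey rank inequality bounding $\dim_{\Z_p} H_2(W,M;\Z_p\Gamma_j)/L(\Z_p\Gamma_j)$ by $|\Gamma_j|$ times its base value; combined with the Euler characteristic identity and the hypothesis $\beta_1((M_n)_{\Gamma_j})-1=|\Gamma_j|(\beta_1(M_n)-1)$ this yields $\dim_\Q L(\Q\Gamma_j)\ge\tfrac12\dim_\Q A(\Q\Gamma_j)$, where $A$ is the image of $H_2$ in $H_2(\,\cdot\,,\partial)$ — note the vanishing must be argued for the nonsingular part of the form on $A$, not on $H_2(W_n;\Q(\zeta_d))$ itself, as your dimension targets suggest. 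Third, this inequality lives at the level of the group ring $\Q\Gamma_j$, and one must run an induction over $j=0,\ldots,a$ using $\Q[\Z_{p^a}]\cong\bigoplus_{r\mid p^a}\Q(\zeta_r)$ to extract the equality at the single cyclotomic summand $\Q(\zeta_{p^a})$; this is why the Betti hypothesis is imposed for every $j$ and not only $j=a$, a point your outline does not engage with. Without items two and three your argument proves only that some isotropic subspace exists, which gives no Witt-class conclusion.
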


Before proving Theorem~\ref{theorem:n.5-solvaility-obstruction}, we
discuss its consequences for links:

\begin{corollary}
  \label{corollary:n.5-solvability-obstruction-for-links}
  Suppose $(\{M_i\},\phi)$ is a $p$-structure of height $n$ for the
  surgery manifold of an $\hF$-link~$L$.  If $L$ is $(n.5)$-solvable,
  then $\lambda(M_n,\phi)=0$.
\end{corollary}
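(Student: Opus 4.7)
The plan is to reduce Corollary~\ref{corollary:n.5-solvability-obstruction-for-links} to Theorem~\ref{theorem:n.5-solvaility-obstruction} by verifying its two technical hypotheses for the $p$-structure arising from the surgery manifold of an $\hF$-link: (i) that $H_1(M_i)$ is $p$-torsion free for every $i\le n$, and (ii) that $\beta_1((M_n)_{\Gamma_j})-1=|\Gamma_j|(\beta_1(M_n)-1)$ for $j=0,1,\ldots,a$. The $(n.5)$-solvability hypothesis is handed over directly.

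For step (i), I invoke Lemma~\ref{lemma:p-tower-map-of-surgery-mfd} to identify the preferred meridian map $\mu\colon X\to M=M_L$ as a $p$-tower map, and let $\{X_i\}$ denote the pullback $p$-tower of $X$ determined by $\{M_i\}$. Iterating Lemma~\ref{lemma:algebraic-closure-of-p-cover}, each lift $X_i\to M_i$ induces an isomorphism on $\widehat{\pi_1(-)}$, and hence on $H_1(-;\Z_{p^r})$ for every $r$. Since $X_i$ is a 1-complex, $H_1(X_i)$ is free abelian of rank $\beta_1(X_i)$, so $H_1(X_i)\otimes\Z_{p^r}=(\Z_{p^r})^{\beta_1(X_i)}$. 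Comparing orders on both sides as $r$ varies (exactly as in the proof of Lemma~\ref{lemma:well-definedness-of-lambda_T}) forces $H_1(M_i)$ to be $p$-torsion free and $\beta_1(M_i)=\beta_1(X_i)$.

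For step (ii), I apply the same strategy one level deeper. Let $(X_n)_{\Gamma_j}$ denote the pullback cover of $X_n$ along the lift $X_n\to M_n$; the composition $\pi_1(X_n)\to\pi_1(M_n)\xrightarrow{\phi}\Gamma_a\to\Gamma_j$ is surjective (one may assume $\phi$ itself surjective, shrinking $\Gamma_a$ if necessary; the remaining surjectivity is forced by the $H_1(-;\Z_p)$ isomorphism of step (i)), so $(X_n)_{\Gamma_j}$ is a connected 1-complex. Lemma~\ref{lemma:algebraic-closure-of-p-cover} then yields that $(X_n)_{\Gamma_j}\to(M_n)_{\Gamma_j}$ induces an isomorphism on $\widehat{\pi_1(-)}$, and the $H_1(-;\Z_{p^r})$ comparison argument of step (i) gives $\beta_1((M_n)_{\Gamma_j})=\beta_1((X_n)_{\Gamma_j})$. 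An Euler characteristic computation for the connected 1-complex $(X_n)_{\Gamma_j}$ yields $\beta_1((X_n)_{\Gamma_j})=1+|\Gamma_j|(\beta_1(X_n)-1)$; combining this with $\beta_1(M_n)=\beta_1(X_n)$ from step (i) establishes the desired identity, and Theorem~\ref{theorem:n.5-solvaility-obstruction} then gives $\lambda(M_n,\phi)=0$.

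The main obstacle is the Betti number identity of step (ii): the $\Gamma_j$-cover $(M_n)_{\Gamma_j}$ is not itself part of the given $p$-tower, so we cannot appeal directly to the $p$-tower map property. The lynchpin is Lemma~\ref{lemma:algebraic-closure-of-p-cover}, which transports an isomorphism on $\widehat{\pi_1(-)}$ across an abelian $p$-cover; this lets us transfer the first-Betti-number computation from the closed 3-manifold side, where it is hard to access directly, to the 1-complex side, where it becomes a one-line Euler characteristic calculation.
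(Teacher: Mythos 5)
Your proposal is correct and follows essentially the same route as the paper: reduce to Theorem~\ref{theorem:n.5-solvaility-obstruction} by using the meridian map $X\to M$ (Lemma~\ref{lemma:p-tower-map-of-surgery-mfd}) to transfer the $p$-torsion-freeness and first-Betti-number computations to the pullback covers of the $1$-complex $X$, where an Euler characteristic count finishes the verification of the hypotheses; the paper's own proof does exactly this by recalling the argument from the proof of Lemma~\ref{lemma:well-definedness-of-lambda_T}. Your only deviations are expository: you obtain the $H_1(-;\Z_{p^r})$ isomorphisms via $\widehat{\pi_1}$-isomorphisms and Lemma~\ref{lemma:algebraic-closure-of-p-cover} rather than the $\Hom(\pi_1(-),\Z_{p^r})$ bijection from the $p$-tower-map property, and you spell out the one-level-deeper comparison for the covers $(M_n)_{\Gamma_j}$ (including the surjectivity/connectivity point) that the paper leaves implicit.
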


\begin{corollary}
  \label{corollary:n.5-solvability-obstruction-for-string-links}
  If $\beta$ is an $(n.5)$-solvable $\hF$-string link, then for any
  $p$-structure $\T=(\{X_i\},\theta)$ of height $n$ for $X$,
  $\lambda_\T(\beta)=0$.
\end{corollary}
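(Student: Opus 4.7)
The plan is to deduce this corollary directly from Corollary \ref{corollary:n.5-solvability-obstruction-for-links} by unwinding the definitions that relate string links to their closures. First, observe that by the very definition of an $\hF$-string link, the closure $L$ of $\beta$ is an $\hF$-link. Similarly, by the definition of $(n.5)$-solvability for a string link (via its surgery manifold) together with the identification $M_\beta = M_L$, the hypothesis that $\beta$ is $(n.5)$-solvable translates exactly to the statement that the $\hF$-link $L$ is $(n.5)$-solvable.

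Next I would pass from the given $p$-structure $\T=(\{X_i\},\theta)$ on $X$ to an appropriate $p$-structure on $M_\beta$. By Lemma \ref{lemma:p-tower-map-of-surgery-mfd}, the preferred meridian map $X \to M_\beta$ is a $p$-tower map, so $\T$ determines, uniquely via pullback, a $p$-structure $(\{M_i\},\phi)$ of height $n$ for $M_\beta$. By the very definition of $\lambda_\T$ given before Theorem \ref{theorem:additivity-of-lambda_T}, we have
\[
\lambda_\T(\beta) = \lambda(M_n,\phi).
\]

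With these identifications in place, Corollary \ref{corollary:n.5-solvability-obstruction-for-links} applies directly to the $(n.5)$-solvable $\hF$-link $L$ equipped with the $p$-structure $(\{M_i\},\phi)$, yielding $\lambda(M_n,\phi) = 0$, hence $\lambda_\T(\beta) = 0$. There is essentially no obstacle here: the statement is a formal consequence of the preceding corollary, together with the dictionary between an $\hF$-string link and its $\hF$-link closure. The only minor bookkeeping is confirming that the pullback $p$-structure constructed from $\T$ is indeed the same $p$-structure one would feed to Corollary \ref{corollary:n.5-solvability-obstruction-for-links}, which is immediate because both are obtained by pullback along the preferred meridian map.
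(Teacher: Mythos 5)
Your deduction is correct and matches the paper, which disposes of this corollary in one line by noting it "follows immediately" from Corollary~\ref{corollary:n.5-solvability-obstruction-for-links}: the closure $L$ of $\beta$ is an $(n.5)$-solvable $\hF$-link with $M_L=M_\beta$, the pullback $p$-structure $(\{M_i\},\phi)$ induced by $\T$ via the preferred meridian map is exactly what defines $\lambda_\T(\beta)=\lambda(M_n,\phi)$, and the link corollary then gives vanishing. Nothing is missing.
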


\begin{proof}[Proof of
  Corollaries~\ref{corollary:n.5-solvability-obstruction-for-links}
  and~\ref{corollary:n.5-solvability-obstruction-for-string-links}]
  First note that
  Corollary~\ref{corollary:n.5-solvability-obstruction-for-string-links}
  follows immediately from
  Corollary~\ref{corollary:n.5-solvability-obstruction-for-links}.  To
  prove
  Corollary~\ref{corollary:n.5-solvability-obstruction-for-links},
  recall that in the proof of
  Lemma~\ref{lemma:well-definedness-of-lambda_T} we have shown the
  following: if $\{M_i\}$ is a $p$-tower of the surgery manifold $M$
  of an $\hF$-link, then $H_1(M_i)$ is $p$-torsion free and a meridian
  map $X=\bigvee^m S^1 \to M$ induces a pullback $p$-tower $\{X_i\}$
  such that $H_1(M_i)\otimes \Z_p \cong H_1(X_i)\otimes \Z_p$.  From
  this it follows that $\beta_1(M_i)=\beta_1(X_i)$.  A straightforward
  Euler characteristic computation (e.g., see \cite[Corollary
  6.5]{Cha:2007-1}) for the cover $X_i$ of $X$ shows that
  $\beta_1(X_i)-1=d(\beta_1(X)-1)$ when $X_i$ is a $d$-fold cover
  of~$X$.  It follows that
  Theorem~\ref{theorem:n.5-solvaility-obstruction} applies to conclude
  that $\lambda(M_n,\phi)=0$.
\end{proof}

Recall that $\hCSL$ is the subgroup of the classes of $\hF$-string
links in $\CSL$, $\FSL_{(h)}$ is the subgroup of the classes of
integrally $(h)$-solvable string links in $\CSL$ in the sense of
\cite{Cochran-Orr-Teichner:1999-1,Harvey:2006-1}, and
$\hFSL_{(n)}=\hCSL \cap \FSL_{(n)}$.  Since an integral $(h)$-solution
is a $\Z_{(p)}$-coefficient $(h)$-solution, as an immediate
consequence of
Corollary~\ref{corollary:n.5-solvability-obstruction-for-string-links}
and Theorem~\ref{theorem:additivity-of-lambda_T} we obtain the first
part of Theorem~\ref{theorem:COT-filtration-and-lambda_T} in the
introduction: $\lambda_\T(-)$ induces a group homomorphism
\[
\lambda_\T\colon \hCSL_{(n)}/\hFSL_{(n.5)} \to L^0(\Q(\zeta_d))
\]
whenever $\T$ is of height~$n$.

The remaining part of this section is devoted to the proof of
Theorem~\ref{theorem:n.5-solvaility-obstruction}.  The proof of
Theorem~\ref{theorem:n.5-solvaility-obstruction} consists of two
steps; first we investigate the solvability of an abelian $p$-cover of
an $(h)$-solvable 3-manifold, in order to reduce the general case to
the special case of $n=0$, and then we complete the proof by showing
Theorem~\ref{theorem:n.5-solvaility-obstruction} for the case of
$n=0$.  In the following two subsections we deal with each step.

\subsection{Solvability of abelian $p$-covers}

The first step of the proof of
Theorem~\ref{theorem:n.5-solvaility-obstruction}, namely
Theorem~\ref{theorem:covering-solution} below, is also interesting on
its own.  For a space $X$ endowed with a homomorphism $\pi_1(X)\to
\Gamma$, we denote the $\Gamma$-cover of $X$ by~$X_\Gamma$.

\begin{theorem}[Covering Solution Theorem]
  \label{theorem:covering-solution}
  Suppose $W$ is an $(h)$-solution for $M$ with $h\ge 1$, $\phi\colon
  \pi_1(M)\to \Gamma$ is a homomorphism onto an abelian
  $p$-group~$\Gamma$, and both $H_1(M)$ and $H_1(M_\Gamma)$ are
  $p$-torsion free.  Then $\phi$ extends to $\pi_1(W)$, and $W_\Gamma$
  is an $(h-1)$-solution of~$M_\Gamma$.
\end{theorem}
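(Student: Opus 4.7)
The plan is to first extend $\phi$ across $W$ and then verify conditions~(1)--(3) of Definition~\ref{definition:Zp-coefficient-solution} at level $h-1$ for $W_\Gamma$. For the extension, since $\Gamma$ is a finite abelian $p$-group, $\phi$ factors uniquely through the $\Z_{(p)}$-linearization $H_1(M)\to H_1(M;\Z_{(p)})$; composing with the inverse of the $\Z_{(p)}$-coefficient $H_1$-bordism isomorphism $H_1(M;\Z_{(p)})\cong H_1(W;\Z_{(p)})$ supplied by condition~(1) on $W$, I obtain an extension $\pi_1(W)\to \Gamma$, and commutativity of the restriction diagram is automatic.

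For condition~(1) on $W_\Gamma$, namely that $H_1(M_\Gamma;\Z_{(p)})\to H_1(W_\Gamma;\Z_{(p)})$ be an isomorphism, I will rewrite this as the vanishing of $H_i(W,M;\Z_{(p)}[\Gamma])$ in degrees $i\le 1$ and induct along a composition series of $\Gamma$ with $\Z_p$-quotients, using the Levine-style Cartan--Leray argument already invoked in Lemma~\ref{lemma:algebraic-closure-of-p-cover} (cf.~\cite[Proof of Proposition~3.2]{Levine:1994-1} and \cite[Corollary~4.13]{Cochran-Harvey:2007-01}). The hypothesis that $H_1(M)$ and $H_1(M_\Gamma)$ are $p$-torsion free is precisely the input that converts $\Z_p$-coefficient conclusions back to $\Z_{(p)}$-coefficient ones at each stage of the induction.

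For conditions~(2) and~(3) the key group-theoretic observation is that $\Gamma$ being abelian forces $[\pi,\pi]\subset\pi_\Gamma:=\Ker(\pi\to\Gamma)$, where $\pi=\pi_1(W)$, and iterating the derived series yields $\pi^{(h)}\subset\pi_\Gamma^{(h-1)}$ for $h\ge 1$ (and likewise $\pi^{(h+\frac12)}\subset\pi_\Gamma^{(h-\frac12)}$). The $\pi/\pi^{(h)}$-cover $\widetilde W$ therefore factors through the $\pi_\Gamma/\pi_\Gamma^{(h-1)}$-cover of $W_\Gamma$, yielding a natural change-of-rings map
\[
H_2(W;\Z_{(p)}[\pi/\pi^{(h)}])\longrightarrow H_2(W_\Gamma;\Z_{(p)}[\pi_\Gamma/\pi_\Gamma^{(h-1)}]).
\]
I will take $u_i^\Gamma,v_i^\Gamma$ to be the images of the given $u_i,v_i$, transport the vanishing of the twisted intersection form by naturality of Poincar\'e--Lefschetz duality under the covering $\widetilde W\to(W_\Gamma)_{\pi_\Gamma/\pi_\Gamma^{(h-1)}}$, and handle the extra lifts $\tilde u_i$ of condition~(3) by the analogous map coming from $\pi^{(h+\frac12)}\subset\pi_\Gamma^{(h-\frac12)}$.

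The main obstacle I anticipate is rank matching: I must show that the $\Gamma$-translates of $u_i^\Gamma,v_i^\Gamma$ genuinely form a Lagrangian pair of the correct size $r':=\tfrac12\beta_2(W_\Gamma)$. This will reduce to an Euler-characteristic computation using $\chi(W_\Gamma)=|\Gamma|\chi(W)$ together with $\beta_3(W_\Gamma)=0$ (following from condition~(1) on $W_\Gamma$ and Lefschetz duality, once $M_\Gamma$ is seen to be connected via surjectivity of $\phi$) and the first-Betti-number control afforded by the $H_1$-bordism property. The concluding technical point, where the $\Z_{(p)}[\pi_\Gamma/\pi_\Gamma^{(h-1)}]$-module structure carries its weight, is checking $\Z_{(p)}$-linear independence of the $\Gamma$-translates of the lifted classes in the twisted homology.
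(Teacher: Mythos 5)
Your extension of $\phi$ and your treatment of conditions (2)--(3) (translates of the $u_i,v_i$ under coset representatives, the containment $\pi^{(n)}\subset\pi_\Gamma^{(n-1)}$ coming from $\Gamma$ abelian, coefficient change, and lifting the $\tilde u_i$) follow the paper's route, but your plan for condition (1) has a real gap. The rewriting of ``$H_1(M_\Gamma;\Z_{(p)})\to H_1(W_\Gamma;\Z_{(p)})$ is an isomorphism'' as ``$H_i(W,M;\Z_{(p)}[\Gamma])=0$ for $i\le 1$'' is wrong: that vanishing only gives \emph{surjectivity} on $H_1$; injectivity is equivalent to surjectivity of $H_2(W_\Gamma;\Z_{(p)})\to H_2(W_\Gamma,M_\Gamma;\Z_{(p)})$, which is not a low-degree relative vanishing and does not propagate through the Levine/Cartan--Leray induction, because $M\to W$ is \emph{not} $2$-connected with $\Z_p$-coefficients (the solution hypothesis controls only $H_1$, while $\beta_2(W)=2r$ is generally positive). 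This is precisely where the paper must use the solvability data: the classes $u_ig_k,v_ig_k$ ($g_k$ coset representatives of $H=\ker(G\to\Gamma)$, $G=\pi/\pi^{(n)}$, with intersections computed via Lemma~\ref{lemma:intersection-of-cover}) map to $|\Gamma|\beta_2(W)$ rationally independent classes in $H_2(W_\Gamma;\Q)$ on which the ordinary intersection form is hyperbolic; combined with the upper bound $\beta_2(W_\Gamma)\le\beta_2(W_\Gamma;\Z_p)\le|\Gamma|\beta_2(W;\Z_p)=|\Gamma|\beta_2(W)$ (the $p$-cover Betti inequality plus $p$-torsion-freeness of $H_2(W)$ from Lemma~\ref{lemma:H_2-when-p-torsion-free-H_1}), the rational intersection form of $W_\Gamma$ is nondegenerate, so $H_1(M_\Gamma;\Q)\to H_1(W_\Gamma;\Q)$ is injective, and the hypothesis that $H_1(M_\Gamma)$ is $p$-torsion free upgrades this to $\Z_{(p)}$-coefficients. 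Without this input your condition (1) is unestablished.

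The same two-sided bound is also what closes your anticipated ``rank matching'' obstacle, and your proposed Euler-characteristic argument cannot: $\chi(W_\Gamma)=|\Gamma|\chi(W)$ together with the $H_1$-bordism property only yields $\beta_1(W_\Gamma)=\beta_1(M_\Gamma)$ (and only once condition (1) is already known), while nothing in the hypotheses of the Covering Solution Theorem bounds $\beta_1(M_\Gamma)$ in terms of $|\Gamma|$ and $\beta_1(M)$ --- such a $\beta_1$ hypothesis is imposed only in Theorem~\ref{theorem:n.5-solvaility-obstruction} and Theorem~\ref{theorem:0.5-solvability-obstruction}, where the Euler-characteristic trick is legitimately used. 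Here the multiplicativity of $\beta_1$ is a \emph{consequence} of the theorem, so your reduction is circular. The correct count is $\beta_2(W_\Gamma)=|\Gamma|\beta_2(W)=2m$ from the bounds above, which shows the $m=\frac12|\Gamma|\beta_2(W)$ translated pairs have exactly the size $\frac12\beta_2(W_\Gamma)$ required by Definition~\ref{definition:Zp-coefficient-solution}. Finally, the ``$\Z_{(p)}$-linear independence of the $\Gamma$-translates in twisted homology'' you flag as the concluding technical point is not what is needed: the definition asks only for the intersection-form identities together with the count $m=\frac12\beta_2(W_\Gamma)$, and the independence that actually matters is that of the untwisted images in $H_2(W_\Gamma;\Q)$, which follows from the hyperbolic pairing of ordinary intersection numbers.
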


In the proof of Theorem~\ref{theorem:covering-solution}, we use the
following notations and observations.  For a fixed subring $R$ of $\Q$
and a 4-manifold $W$ endowed with $\pi_1(W) \to G$, we denote the
$RG$-valued intersection form on $H_2(W;RG)$ by~$\lambda_W^G$.  Here
$RG$ is regarded as a ring with involution $\bar g=g^{-1}$ as usual.
We adopt the convention that $H_*(W;RG)$ is a right $RG$-module and
$\lambda_W^G$ is defined by
\[
\lambda_W^G(x,y)=\sum_{g\in G} (x\cdot yg)g
\]
where $\cdot$ designates the ordinary intersection number.  Then
$\lambda_W^G$ satisfies $\lambda_W^G(xg,y)=\lambda_W^G(x,y)g$,
$\lambda_W^G(x,yg)=\bar g \lambda_W^G(x,y)$, and
$\lambda_W^G(y,x)=\overline{\lambda_W^G(x,y)}$.

Suppose $G\to\Gamma$ is a surjective group homomorphism with
kernel~$H$.  Then the $\Gamma$-cover $W_\Gamma$ is defined, and since
$\pi_1(W) \to G$ induces $\pi_1(W_\Gamma) \to H$, the $H$-cover
$(W_\Gamma)_H$ of $W_\Gamma$ is also defined.  In fact, $W_G =
(W_\Gamma)_H$.  It follows that $H_*(W;RG)\cong H_*(W_\Gamma;RH)$.
Choose pre-images $g_i \in G$ ($i=1,\ldots,|\Gamma|$) of each element
of $\Gamma$, i.e., the $g_i$ are coset representatives of $H\subset
G$.  Note that $\lambda_{W_\Gamma}^H$ is the $RH$-valued intersection
form of $W_\Gamma$.  Regarding $RH$ as a subring of $RG$, we have the
following identity, which can be verified easily by using the defining
formula of $\lambda_W^G$ and~$\lambda_{W_\Gamma}^H$.

\begin{lemma}
  \label{lemma:intersection-of-cover}
  $\displaystyle \lambda_W^G(x,y) = \sum_i
  \lambda_{W_\Gamma}^H(x,yg_i)g_i \in RG$
\end{lemma}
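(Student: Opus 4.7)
The plan is to prove this by an elementary bookkeeping calculation directly from the two defining formulas, using only the fact that $W_G$ is simultaneously the $G$-cover of $W$ and the $H$-cover of $W_\Gamma$ (so that geometric intersection numbers computed in $W_G$ are common data for both $\lambda_W^G$ and $\lambda_{W_\Gamma}^H$), together with the partition of $G$ into cosets of $H$.

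More concretely, I would start from
\[
\lambda_W^G(x,y) = \sum_{g\in G} (x\cdot yg)_{W_G}\, g
\]
and partition $G$ using the coset representatives: since $H$ is normal (as the kernel of $G\to\Gamma$) and $\{g_i\}$ runs over all cosets, every $g\in G$ has a unique expression $g = g_i h$ with $h\in H$, and as $g$ ranges over $G$ the pair $(i,h)$ ranges bijectively over $\{1,\ldots,|\Gamma|\}\times H$. Substituting and collecting terms with the same $i$ gives
\[
\lambda_W^G(x,y) = \sum_i \sum_{h\in H} (x\cdot y g_i h)_{W_G}\, g_i h,
\]
and the inner sum is to be compared to $\lambda_{W_\Gamma}^H(x, yg_i) = \sum_{h\in H}(x\cdot (yg_i)h)_{W_G}\, h$, in which the intersection numbers are the same ordinary geometric intersection numbers in $W_G=(W_\Gamma)_H$.

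The final step is a matter of moving the factor $g_i$ past the scalar-valued intersection numbers (which lie in $R$ and commute with everything) so that the inner sum becomes visibly $\lambda_{W_\Gamma}^H(x,yg_i)$ paired with $g_i$ as stated. The only thing to be careful about is the side on which $g_i$ appears; this is dictated by the paper's sesquilinearity conventions $\lambda_W^G(xg,y)=\lambda_W^G(x,y)g$ and $\lambda_W^G(x,yg)=\bar g\lambda_W^G(x,y)$, and choosing left versus right cosets accordingly matches the two sides of the claimed identity.

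I do not expect any genuine obstacle: the lemma is essentially a tautology about how an equivariant intersection form with respect to a larger deck group decomposes into pieces indexed by cosets of a subgroup, so the work is entirely notational. The conceptual content is the identification $W_G=(W_\Gamma)_H$ and the bijection $G \leftrightarrow \{1,\ldots,|\Gamma|\}\times H$ via the coset representatives $\{g_i\}$.
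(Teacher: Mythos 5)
Your overall route is the same as the paper's: the paper offers no argument beyond saying the identity "can be verified easily" from the defining formulas, and the coset-by-coset bookkeeping you describe, resting on the identification $W_G=(W_\Gamma)_H$ and the bijection between $G$ and $\{1,\ldots,|\Gamma|\}\times H$, is exactly that verification.

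However, the one step you defer --- ``the only thing to be careful about is the side on which $g_i$ appears'' --- is precisely where the computation does not close in the way you assert. Writing $g=g_ih$ (left cosets) gives $\lambda_W^G(x,y)=\sum_i\sum_{h\in H}(x\cdot yg_ih)\,g_ih=\sum_i g_i\,\lambda_{W_\Gamma}^H(x,yg_i)$, with $g_i$ on the \emph{left}; writing $g=hg_i$ (right cosets) and using invariance of intersection numbers under the deck transformation $g_i^{-1}$ gives $\lambda_W^G(x,y)=\sum_i \lambda_{W_\Gamma}^H(xg_i^{-1},y)\,g_i$. Neither choice yields the displayed formula $\sum_i\lambda_{W_\Gamma}^H(x,yg_i)\,g_i$ verbatim: since $\lambda_{W_\Gamma}^H(x,yg_i)\in RH$ and $G$ need not be abelian (in the application $G=\pi_1(W)/\pi_1(W)^{(n)}$), the elements $g_i\lambda_{W_\Gamma}^H(x,yg_i)$ and $\lambda_{W_\Gamma}^H(x,yg_i)g_i$ differ by conjugation by $g_i$; concretely, the coefficient of $k\in Hg_i$ on the right-hand side of the stated formula is $x\cdot y(g_ikg_i^{-1})$ rather than $x\cdot yk$. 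So your claim that choosing left versus right cosets ``matches the two sides of the claimed identity'' is not accurate; the honest output of your calculation is the left-multiplied version. To be fair, this is a sidedness slip in the lemma as printed rather than a defect of your method, and it is immaterial where the lemma is used (Claim~2 in the proof of the Covering Solution Theorem), because there $\lambda_W^G(u_i,u_j)=0$ and $\lambda_W^G(u_i,v_j)=\delta_{ij}$ are central in $RG$, so the conjugation discrepancy vanishes. A complete write-up should either prove and use the identity $\lambda_W^G(x,y)=\sum_i g_i\,\lambda_{W_\Gamma}^H(x,yg_i)$, or state explicitly why the discrepancy does not affect the application.
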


The following observation is also necessary to prove
Theorem~\ref{theorem:covering-solution}.

\begin{lemma}
  \label{lemma:H_2-when-p-torsion-free-H_1}
  Suppose $R$ is a subring of $\Q$.  If $W$ is a 4-manifold with
  boundary $M$ and $H_1(M;R) \to H_1(W;R)$ is surjective, then
  $H_2(W;R)$ is a free $R$-module.
\end{lemma}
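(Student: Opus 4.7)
The plan is to combine Lefschetz duality for the oriented 4-manifold $W$ with the universal coefficient theorem over $R$, exploiting the fact that any subring $R\subset\Q$ is a PID (indeed a localization of $\Z$). I will use two elementary observations: over such $R$, a finitely generated torsion-free module is free; and $\Hom_R(A,R)$ is automatically torsion-free for any $R$-module $A$ (since if $rf=0$ for some nonzero $r\in R$, then $rf(a)=0$ in the domain $R$ for all $a\in A$, forcing $f\equiv 0$).

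The key step will be to deduce structural information from the long exact sequence of the pair $(W,M)$ in $R$-coefficients:
\[
H_1(M;R)\to H_1(W;R)\to H_1(W,M;R)\to H_0(M;R)\to H_0(W;R).
\]
By hypothesis the first arrow is surjective, so the second map vanishes and $H_1(W,M;R)$ embeds into $H_0(M;R)$, which is free of rank equal to the number of components of $M$. Since a submodule of a finitely generated free module over a PID is free, I conclude that $H_1(W,M;R)$ is itself free, and hence $\Ext^1_R(H_1(W,M;R),R)=0$.

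Feeding this into the universal coefficient theorem over $R$ collapses the short exact sequence to an isomorphism
\[
H^2(W,M;R)\cong \Hom_R(H_2(W,M;R),R),
\]
which is finitely generated and torsion-free over the PID $R$, hence free. Lefschetz duality for the oriented 4-manifold $W$ with boundary $M$ then gives $H_2(W;R)\cong H^2(W,M;R)$, completing the argument. The proof carries no serious obstacle; the only point worth emphasizing is that the surjectivity hypothesis is precisely what is needed to make $H_1(W,M;R)$ free and thereby eliminate the Ext term from UCT, turning $H^2(W,M;R)$ into a dual module (which is automatically torsion-free) rather than an object potentially carrying torsion inherited from $H_1(W;R)$.
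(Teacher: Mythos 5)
Your proof is correct and takes essentially the same route as the paper: Lefschetz duality identifies $H_2(W;R)$ with $H^2(W,M;R)$, and the universal coefficient theorem together with the surjectivity hypothesis eliminates the Ext obstruction, leaving a dual (hence free) module. The only harmless difference is that you apply UCT over the PID $R$ and show $H_1(W,M;R)$ is free via its embedding into $H_0(M;R)$, whereas the paper applies the integral UCT and kills $\Ext(H_1(W,M),R)$ directly from $H_1(W,M)\otimes R=0$.
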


\begin{proof}
  We have
  \[
  H_2(W;R)=H^2(W,M;R)=\Hom(H_2(W,M),R) \oplus \Ext(H_1(W,M),R).
  \]
  Since $R$ is torsion free, $\Hom(H_2(W,M),R)$ is isomorphic to a
  free $R$-module of rank~$\beta_2(W)$.  By the surjectivity
  assumption, $0=H_1(W,M;R)=H_1(W,M)\otimes R$.  From this it follows
  that the free part of $H_1(W,M)$ is trivial.  Also, if $H_1(W,M)$
  has a $\Z_{r}$-summand, then $1/r \in R$.  Since
  $\Ext(\Z_{r},R)=R/rR$, it follows that $\Ext(H_1(W,M),R)=0$.
\end{proof}

\begin{proof}[Proof of Theorem~\ref{theorem:covering-solution}]
  Recall our assumptions: $W$ is an $(h)$-solution of $M$, $h>1$,
  $M_\Gamma$ is a $\Gamma$-cover determined by $\pi_1(M)\to \Gamma$,
  and both $H_1(M)$ and $H_1(M_\Gamma)$ are $p$-torsion free.

  Since $H_1(M;\Z_{(p)})\to H_1(W;\Z_{(p)})$ is an isomorphism, $M$
  and $W$ have the same $H_1(-)/(\text{torsion coprime to }p)$.  Since
  $\Gamma$ is an abelian $p$-group, $\pi_1(M) \to \Gamma$ factors
  through $H_1(M)/(\text{torsion coprime to }p)$, and thus it extends
  to $\pi_1(W)$.  Therefore, there is defined the $\Gamma$-cover
  $W_\Gamma$ of $W$ with boundary~$M_\Gamma$.

  Also, $H_1(W,M;\Z_{(p)})=0$.  It follows that
  $H_1(W_\Gamma,M_\Gamma;\Z_{(p)})=0$ by Levine's argument \cite[Proof
  of Proposition 3.2]{Levine:1994-1} (see \cite[Lemma 3.2,
  3.3]{Cha:2007-1}, \cite[Corollary 4.13]{Cochran-Harvey:2007-01} for
  statements that apply to our case directly), and so
  $H_1(M_\Gamma;\Z_{(p)})\to H_1(W_\Gamma;\Z_{(p)})$ is surjective.
  To show that $H_1(M_\Gamma;\Z_{(p)})\to H_1(W_\Gamma;\Z_{(p)})$ is
  an isomorphism, we will prove that in the exact sequence
  \[
  H_2(W_\Gamma;\Z_{(p)}) \to H_2(W_\Gamma,M_\Gamma;\Z_{(p)}) \to
  H_1(M_\Gamma;\Z_{(p)})\to H_1(W_\Gamma;\Z_{(p)}) \to 0
  \]
  the leftmost map is surjective.  For this purpose, we need the
  following two claims:
  
  \begin{claim}
    $\beta_2(W_\Gamma) \le |\Gamma| \beta_2(W)$.
  \end{claim}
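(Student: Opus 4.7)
The plan is to prove the claim by combining Poincar\'e--Lefschetz duality with a chain-level minimization argument for the relative cellular complex of $(W, M)$.

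First, since $W_\Gamma$ is a topological $4$-manifold with boundary $M_\Gamma$, Poincar\'e--Lefschetz duality and universal coefficients over $\Q$ yield
\[
\beta_2(W_\Gamma) = \dim_\Q H_2(W_\Gamma;\Q) = \dim_\Q H_2(W_\Gamma, M_\Gamma;\Q),
\]
reducing the claim to the estimate $\dim_\Q H_2(W_\Gamma, M_\Gamma;\Q) \leq |\Gamma|\beta_2(W)$.

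Next, I would verify that the relative homology $H_k(W, M;\Z_{(p)})$ is free in every degree with computable ranks: $H_1(W, M;\Z_{(p)}) = 0$ by the $\Z_{(p)}$-coefficient $H_1$-bordism property, Lemma~\ref{lemma:H_2-when-p-torsion-free-H_1} together with Poincar\'e--Lefschetz duality and the $p$-torsion-freeness of $H_1(M)$ gives $H_2(W, M;\Z_{(p)}) \cong \Z_{(p)}^{\beta_2(W)}$, and analogous duality considerations yield free modules of ranks $\beta_1(W)$ and $1$ in degrees $3$ and $4$.  Using the already-established vanishing $H_0(W_\Gamma, M_\Gamma;\Z_{(p)}) = H_1(W_\Gamma, M_\Gamma;\Z_{(p)}) = 0$ (the latter via Levine's argument), pairs of free $\Z_{(p)}\Gamma$-generators can be cancelled in degrees $0$ and $1$ of the equivariant cellular complex $C_\ast(W, M;\Z_{(p)}\Gamma)$, producing a chain-homotopy equivalent free $\Z_{(p)}\Gamma$-complex with exactly $\beta_2(W)$ generators in degree~$2$.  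Topologically this corresponds to finding a handle decomposition of $W$ relative to $M$ with $\beta_2(W)$ two-handles, $\beta_1(W)$ three-handles, and one four-handle, via standard Smale-type handle cancellation in the smooth category or Freedman--Quinn in the topological category.

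Pulling back to the $|\Gamma|$-fold cover, the degree-$2$ term of $C_\ast(W_\Gamma, M_\Gamma;\Q)$ is a $\Q$-vector space of dimension exactly $|\Gamma|\beta_2(W)$, so its subquotient $H_2(W_\Gamma, M_\Gamma;\Q)$ satisfies the required inequality.  The main obstacle will be making the $\Z_{(p)}\Gamma$-equivariant chain-level cancellation rigorous: the boundaries to the generators being cancelled must be $\Z_{(p)}\Gamma$-units in the appropriate sense, which is ensured by choosing the generators carefully, using the explicit chain-level vanishing of relative homology in degrees $0$ and~$1$ to produce the cancelling pairs.
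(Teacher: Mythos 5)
There is a genuine gap at the cancellation step. Cancelling pairs of generators in degrees $0$ and $1$ of $C_*(W,M;\Z_{(p)}\Gamma)$ --- which is all that the vanishing of $H_0(W_\Gamma,M_\Gamma;\Z_{(p)})$ and $H_1(W_\Gamma,M_\Gamma;\Z_{(p)})$ can give you --- removes generator pairs in degrees $(0,1)$ and $(1,2)$ but gives no upper bound on how many degree-$2$ generators remain; it certainly does not produce a complex with \emph{exactly} $\beta_2(W)$ generators in degree $2$. The rank of the degree-$2$ term can only be brought down to $\beta_2(W)$ by simplifying against degree $3$ as well, and the input governing that is not the homology of the cover but the residue-field homology of the base: since $\Gamma$ is a finite $p$-group, $\Z_{(p)}\Gamma$ (or $\Z_p\Gamma$) is a local ring with residue field $\Z_p$, so $C_*(W,M;\Z_{(p)}\Gamma)$ is chain equivalent to a minimal complex whose degree-$i$ rank equals $\dim_{\Z_p}H_i(W,M;\Z_p)$ (Nakayama), and one then identifies $\dim_{\Z_p}H_2(W,M;\Z_p)$ with $\beta_2(W)$ using the freeness of $H_*(W,M;\Z_{(p)})$ that you established. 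Nothing you have said rules out, for instance, a surviving differential block of the form $1-\gamma$ ($\gamma\in\Gamma$) from degree $3$ to degree $2$, which would inflate the degree-$2$ rank without being detected in degrees $0$ and $1$; eliminating such possibilities is exactly the local-ring/minimal-complex argument you have omitted. A secondary error is the appeal to handle decompositions: a compact topological $4$-manifold admits a handle decomposition only if it is smoothable, so Freedman--Quinn does not supply one; this is harmless only because handles are unnecessary --- a finite CW structure up to homotopy already gives the finitely generated free $\Z_{(p)}\Gamma$-complex you need.

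For comparison, the paper settles the claim in two lines: it quotes from \cite{Cha:2007-1} the inequality $\beta_2(W_\Gamma;\Z_p)\le|\Gamma|\,\beta_2(W;\Z_p)$ for covers with $p$-group deck group (whose proof is precisely the minimal-complex argument over the local ring $\Z_p\Gamma$ sketched above), and then uses Lemma~\ref{lemma:H_2-when-p-torsion-free-H_1} to see that $H_2(W)$ is $p$-torsion free, so that $\beta_2(W_\Gamma)\le\beta_2(W_\Gamma;\Z_p)\le|\Gamma|\,\beta_2(W;\Z_p)=|\Gamma|\,\beta_2(W)$. Your duality reformulation $\beta_2(W_\Gamma)=\dim_\Q H_2(W_\Gamma,M_\Gamma;\Q)$ and your computation that $H_*(W,M;\Z_{(p)})$ is free with the stated ranks are correct and would feed into a repaired argument, but as written they do not substitute for the missing Nakayama step, which is the actual content of the claim.
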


  To prove this we appeal to the following fact proved
  in~\cite{Cha:2007-1}: for any $p$-group $\Gamma$,
  \[
  \beta_2(W_\Gamma;\Z_p) \le |\Gamma|
  \beta_2(W;\Z_p)
  \]
  where $\beta_i(-;\Z_p)$ denotes the $\Z_p$-Betti number.  In our
  case, since $H_2(W)$ is $p$-torsion free by
  Lemma~\ref{lemma:H_2-when-p-torsion-free-H_1}, we have
  \[
  \beta_2(W_\Gamma) \le
  \beta_2(W_\Gamma;\Z_p) \le |\Gamma| \beta_2(W;\Z_p) = |\Gamma|
  \beta_2(W).
  \]

  For the second claim, we write $h=n$ or $n.5$ for some integer $n$,
  and let $G=\pi_1(W)/\pi_1(W)^{(n)}$.  Note that there is $G \to
  \Gamma$ since $\pi_1(W)^{(n)} \subset \pi_1(W)^{(1)} \subset
  \pi_1(W_\Gamma) \subset \pi_1(W)$.  As in
  Lemma~\ref{lemma:intersection-of-cover}, let
  $H=\pi_1(W_\Gamma)/\pi_1(W)^{(n)}$ be the kernel of $G\to \Gamma$.

  \begin{claim}
    There are elements $x_1,\ldots,x_m, y_1,\ldots,y_m$ in
    $H_2(W_\Gamma;\Z_{(p)}H)$ such that
    $\lambda_{W_\Gamma}^H(x_i,x_j)=0$ and
    $\lambda_{W_\Gamma}^H(x_i,y_j)=\delta_{ij}$, where $m=\frac12
    |\Gamma| \beta_2(W)$.
  \end{claim}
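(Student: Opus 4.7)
The plan is to produce the required Lagrangian pairs on $W_\Gamma$ by translating the given Lagrangian data for $W$ along coset representatives of $H$ in $G$, and then using Lemma~\ref{lemma:intersection-of-cover} to convert identities for $\lambda_W^G$ into identities for $\lambda_{W_\Gamma}^H$. Write $h=n$ or $h=n.5$ so that $G=\pi_1(W)/\pi_1(W)^{(n)}$. Because $W$ is an $(h)$-solution, we have $u_1,\dots,u_r,v_1,\dots,v_r\in H_2(W;\Z_{(p)}[\pi/\pi^{(n)}]) = H_2(W;\Z_{(p)}G)$ with $\lambda_W^G(u_i,u_j)=0$ and $\lambda_W^G(u_i,v_j)=\delta_{ij}$, where $r=\tfrac12\beta_2(W)$. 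Using $H_2(W;\Z_{(p)}G)\cong H_2(W_G;\Z_{(p)})\cong H_2(W_\Gamma;\Z_{(p)}H)$, I will view $u_i$ and $v_i$ as elements of $H_2(W_\Gamma;\Z_{(p)}H)$ on which the full group $G$ still acts by covering transformations of $W_G\to W$.

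Next, I will fix coset representatives $g_1=e,g_2,\dots,g_{|\Gamma|}$ of $H\subset G$ and define, for $i=1,\dots,r$ and $k=1,\dots,|\Gamma|$,
\[
x_{(i,k)}=u_i g_k, \qquad y_{(i,k)}=v_i g_k,
\]
which are elements of $H_2(W_\Gamma;\Z_{(p)}H)$ via the above identification. Sesquilinearity of $\lambda_W^G$ yields
\[
\lambda_W^G(x_{(i,k)},x_{(j,l)})=\bar g_k\,\lambda_W^G(u_i,u_j)\,g_l=0,
\qquad
\lambda_W^G(x_{(i,k)},y_{(j,l)})=\delta_{ij}\,g_k^{-1}g_l.
\]

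Now I will apply Lemma~\ref{lemma:intersection-of-cover}, which expresses $\lambda_W^G(x,y)$ as $\sum_m\lambda_{W_\Gamma}^H(x,yg_m)g_m$ with coefficients in $\Z_{(p)}H$ sitting in distinct cosets $Hg_m\subset \Z_{(p)}G$. Comparing coefficients in the expansion of $\lambda_W^G(x_{(i,k)},x_{(j,l)})=0$ gives $\lambda_{W_\Gamma}^H(x_{(i,k)},x_{(j,l)})=0$ for all indices. For $\lambda_W^G(x_{(i,k)},y_{(j,l)})=\delta_{ij}\,g_k^{-1}g_l$, when $i\ne j$ all coefficients vanish, and when $i=j$ the element $g_k^{-1}g_l$ lies in $H$ precisely when $l=k$ (in which case it equals $e$); reading off the coefficient of $g_m=e$ then produces $\lambda_{W_\Gamma}^H(x_{(i,k)},y_{(j,l)})=\delta_{ij}\delta_{kl}$. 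Relabelling the $|\Gamma|\cdot r=\tfrac12|\Gamma|\beta_2(W)=m$ pairs by a single index finishes the construction.

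The only genuine subtlety is justifying the $G$-translation step: one must be sure that multiplication by $g_k\notin H$ makes sense on $H_2(W_\Gamma;\Z_{(p)}H)$ and interacts correctly with $\lambda_{W_\Gamma}^H$. This is handled by the identification $H_2(W_\Gamma;\Z_{(p)}H)=H_2(W_G;\Z_{(p)})$, on which $G$ acts by deck transformations with $H$ acting as the scalar subring; the compatibility of $\lambda_W^G$ with $\lambda_{W_\Gamma}^H$ is exactly the content of Lemma~\ref{lemma:intersection-of-cover}, so no further ad hoc computation on $W_\Gamma$ is required. Everything else is formal bookkeeping of cosets and sesquilinearity.
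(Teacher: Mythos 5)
Your proposal is correct and is essentially the paper's own argument: both take the Lagrangian data $u_i,v_i$ from the $(h)$-solution, translate by coset representatives $g_k$ of $H$ in $G$ under the identification $H_2(W;\Z_{(p)}G)=H_2(W_\Gamma;\Z_{(p)}H)$, and read off $\lambda_{W_\Gamma}^H(u_ig_k,u_jg_\ell)=0$, $\lambda_{W_\Gamma}^H(u_ig_k,v_jg_\ell)=\delta_{ij}\delta_{k\ell}$ from Lemma~\ref{lemma:intersection-of-cover}. (Your sesquilinearity bookkeeping swaps the roles of $g_k$ and $g_\ell$ relative to the paper's stated conventions, but this does not affect the coefficient-in-$H$ comparison or the conclusion.)
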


  \begin{trivlist}
  \item[]
    To construct the $x_i$ and $y_i$, we start with
    $u_1,\ldots,u_r,v_1,\ldots,v_r \in H_2(W;\Z_{(p)}G)$ such that
    $\lambda_W^G(u_i,u_j)=0$ and $\lambda_W^G(u_i,v_j)=\delta_{ij}$,
    which are given by the definition of the $(h)$-solvability, where
    $r=\frac 12 \beta_2(W)$.  Choose a pre-image $g_i\in G$
    ($i=1,\ldots,|\Gamma|$) of each element in~$\Gamma$ as in
    Lemma~\ref{lemma:intersection-of-cover}.  Then the elements
    $u_ig_k$, $v_ig_k$ in $H_2(W;\Z_{(p)}G)=H_2(W_\Gamma;\Z_{(p)}H)$
    satisfy $\lambda_{W_\Gamma}^H(u_ig_k,u_jg_\ell)=0$ and
    $\lambda_{W_\Gamma}^H(u_ig_k,v_jg_\ell)=\delta_{ij}\delta_{k\ell}$
    by Lemma~\ref{lemma:intersection-of-cover}.  This proves Claim~2.
  \end{trivlist}
  
  Returning to the proof of the surjectivity of
  $H_2(W_\Gamma;\Z_{(p)}) \to H_2(W_\Gamma,M_\Gamma;\Z_{(p)})$,
  consider the images $\bar x_i$, $\bar y_i \in
  H_2(W_\Gamma;\Z_{(p)})$ of the $x_i$ and~$y_i$.  By Claim 2 and by
  the naturality of the intersection form, it follows that the
  $\Z_{(p)}$-valued ordinary intersection numbers of the $x_i$ and
  $y_i$ in $W_\Gamma$ are given by $\bar x_i \cdot \bar x_j=0$ and
  $\bar x_i \cdot \bar y_j=\delta_{ij}$.  Therefore $\beta_2(W_\Gamma)
  \ge \frac 12 |\Gamma| \beta_2(W)$.  Combining this with Claim 1, we
  have $\beta_2(W_\Gamma) = \frac 12 |\Gamma| \beta_2(W)$, and
  $H_2(W_\Gamma;\Q)\to H_2(W_\Gamma,M_\Gamma;\Q)$ is an isomorphism.

  Consider the below commutative diagram:
  \[
  \begin{diagram}
    \node{H_1(M_\Gamma;\Z_{(p)})} \arrow{e}\arrow{s}
    \node{H_1(W_\Gamma;\Z_{(p)})} \arrow{s}
    \\
    \node{H_1(M_\Gamma;\Q)} \arrow{e}
    \node{H_1(W_\Gamma;\Q)}
  \end{diagram}
  \]
  The bottom horizontal arrow is injective by the last paragraph, and
  so is the left vertical arrow, since $H_1(M_\Gamma)$ is $p$-torsion
  free.  It follows that the top horizontal arrow is also injective,
  and therefore is an isomorphism.

  Now, observe that $\pi_1(W)^{(n)} \subset \pi_1(W_\Gamma)^{(n-1)}$
  since $\Gamma$ is abelian.  So there is an induced map $H\to
  N=\pi_1(W_\Gamma)/\pi_1(W_\Gamma)^{(n-1)}$.  Let $x_i',y_i' \in
  H_2(W_\Gamma;\Z_{(p)}N)$ be the image of $x_i,y_i\in
  H_2(W_\Gamma;\Z_{(p)}H)$ given in Claim~2 ($i=1,\ldots,m$).  Then by
  the naturality of the intersection form,
  $\lambda_{W_\Gamma}^N(x'_i,x'_i)=0$ and
  $\lambda_{W_\Gamma}^N(x'_i,y'_i)=\delta_{ij}$.  Since $m=\frac12
  |\Gamma| \beta_2(W) = \frac12 \beta_2(W_\Gamma)$, it follows that
  $W_\Gamma$ is an $(h-1)$-solution of $M_\Gamma$ when $h=n$.

  For the case of $h=n.5$, there are $\tilde u_1,\ldots,\tilde u_r \in
  H_2(W;\Z_{(p)}[\pi_1(W)/\pi_1(W)^{(n+1)}])$ such that
  $\lambda_W^{(n+1)}(\tilde u_i,\tilde u_j)=0$ and $u_i$ is the image
  of $\tilde u_i$.  Similarly to the construction of the above $x_i$,
  we can produce $m$ elements $\tilde x_i \in
  H_2(W_\Gamma;\Z_{(p)}[\pi_1(W_\Gamma)/\pi_1(W_\Gamma)^{(n)}])$ from
  the $\tilde u_i$, in such a way that the $\tilde x_i$ together with
  the $x_i, y_i$ satisfy the intersection form condition required in
  the definition of $(n.5)$-solvability.  This completes the proof for
  $h=n.5$.
\end{proof}

\subsection{Vanishing of $\lambda(M,\phi)$ for $(0.5)$-solvable $M$}

\begin{theorem}
  \label{theorem:0.5-solvability-obstruction}
  Suppose $M$ is $(0.5)$-solvable 3-manifold with $p$-torsion free
  $H_1(M)$ and $\phi\colon\pi_1(M) \to \Gamma=\Z_{p^a}$ is a character
  such that $\beta_1(M_{\Gamma_i})-1=p^i(\beta_1(M)-1)$ for
  $i=0,1,\ldots,a$.  Then $\lambda(M,\phi)=0$ in
  $L^0(\Q(\zeta_{p^a}))$.
\end{theorem}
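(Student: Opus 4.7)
The plan is to take any $\Z_{(p)}$-coefficient $(0.5)$-solution $W$ of $M$ and evaluate $\lambda(M,\phi)$ using $W$ directly. First, well-definedness of $\lambda(M,\phi)$ follows exactly as in the proof of Lemma~\ref{lemma:well-definedness-of-lambda_T}: since $H_1(M)$ is $p$-torsion free, $\phi$ factors through $H_1(M)/\text{torsion}$ and hence through $\Z\to\Z_{p^a}$, so $(M,\phi)$ lies in the image of $\Omega_3^{top}(B\Z)=0$. Because $W$ is a $(0)$-solution, $H_1(M;\Z_{(p)})\to H_1(W;\Z_{(p)})$ is an isomorphism, and together with $p$-torsion-freeness this extends $\phi$ uniquely to $\pi_1(W)\to\Gamma$. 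By the definition of $\lambda$ \cite[Def.~2.2]{Cha:2007-1}, $\lambda(M,\phi)=[\lambda_{\Q(\zeta_{p^a})}(W)]-[\lambda_\Q(W)]$, so it suffices to show both Witt classes vanish. The untwisted class is immediate: the half-basis $u_1,\dots,u_r,v_1,\dots,v_r\in H_2(W;\Z_{(p)})$ furnished by $(0)$-solvability becomes a hyperbolic basis after tensoring with $\Q$.

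For the twisted class I would exploit the $(0.5)$-solution lifts $\tilde u_i\in H_2(W;\Z_{(p)}[\pi/\pi^{(1)}])$ satisfying $\lambda_W^{(1)}(\tilde u_i,\tilde u_j)=0$. Since $\Gamma$ is abelian, the extension of $\phi$ factors through $\pi/\pi^{(1)}$, and the induced ring map $\Z_{(p)}[\pi/\pi^{(1)}]\to\Z_{(p)}[\Gamma]\to\Q(\zeta_{p^a})$ pushes $\tilde u_i$ forward to classes $\tilde u_i'\in H_2(W;\Q(\zeta_{p^a}))$ satisfying $\lambda_{\Q(\zeta_{p^a})}(\tilde u_i',\tilde u_j')=0$ by naturality of the intersection form. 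The remaining task is to certify that these $r$ self-annihilating classes span a Lagrangian.

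To control dimensions, the Betti number hypothesis is crucial. Using the ring decomposition $\Q[\Gamma_j]=\prod_{d\mid p^j}\Q(\zeta_d)$ and the identity $\beta_1(M_{\Gamma_j})=\sum_{d\mid p^j}\varphi(d)\dim_{\Q(\zeta_d)}H_1(M;\Q(\zeta_d))$, a M\"obius-inversion argument extracts $\dim_{\Q(\zeta_d)}H_1(M;\Q(\zeta_d))=\beta_1(M)-1$ for every nontrivial $d\mid p^a$. Levine's covering argument (see~\cite[Lemma~3.2, 3.3]{Cha:2007-1}) then transfers the $(0)$-solution $H_1$-isomorphism to $\Q(\zeta_{p^a})$-coefficients, yielding $H_1(M;\Q(\zeta_{p^a}))\cong H_1(W;\Q(\zeta_{p^a}))$, so $h_1(W;\Q(\zeta_{p^a}))=\beta_1(W)-1$. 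Combining this with the twist-invariance of the Euler characteristic and Poincar\'e-Lefschetz duality (the latter also implying nonsingularity of the twisted intersection form on $H_2(W;\Q(\zeta_{p^a}))$) forces $\dim_{\Q(\zeta_{p^a})}H_2(W;\Q(\zeta_{p^a}))=\beta_2(W)=2r$.

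The hardest step will be verifying that the $r$ classes $\tilde u_i'$ are linearly independent over $\Q(\zeta_{p^a})$. Their untwisted images $u_i\in H_2(W;\Q)$ are linearly independent but live in the trivial isotypical summand of $H_2(W;\Q[\Gamma])=\bigoplus_{d\mid p^a}H_2(W;\Q(\zeta_d))$, so ordinary independence does not transfer automatically to the $\zeta_{p^a}$-component. I anticipate handling this via a sublagrangian reduction applied to the $\Gamma$-equivariant $\Q[\Gamma]$-valued intersection form on $H_2(W;\Q[\Gamma])$: the $\Q[\Gamma]$-span of the $\tilde u_i$ is isotropic in a nonsingular form whose rank is pinned at $2r$ by summing the dimensional counts over all $d\mid p^a$, and $\Gamma$-equivariance propagates the known rank on the trivial component to force rank exactly $r$ in every isotypical piece. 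Once linear independence is in hand, $r$ linearly independent self-annihilating classes in a $2r$-dimensional nonsingular Hermitian space form a Lagrangian, whence $[\lambda_{\Q(\zeta_{p^a})}(W)]=0$ and $\lambda(M,\phi)=0$ as required.
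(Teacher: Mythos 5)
Your overall shape (compute with a $(0.5)$-solution $W$, kill the untwisted class using the $u_i,v_i$, push the lifts $\tilde u_i$ into $\Q(\zeta_{p^a})$-coefficients and try to show they span a half-dimensional isotropic subspace) matches the paper, but the step you yourself flag as the hardest is exactly where the proposal breaks down, and the device you propose for it does not work. ``$\Gamma$-equivariance propagates the known rank on the trivial component to force rank exactly $r$ in every isotypical piece'' is false as a principle: a class in the image of the norm element of $\Q[\Gamma]$ has nonzero image in the trivial isotypic component but zero image in every nontrivial cyclotomic component, so the $\Q[\Gamma]$-span of the $\tilde u_i$ can perfectly well have rank $r$ over $\Q$ and rank $<r$ over $\Q(\zeta_{p^a})$ without violating equivariance. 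The paper closes this gap with a genuinely different (and mod-$p$) tool: after multiplying the lifts by a constant $c$ prime to $p$ so that their images are $\Z_p$-independent in $H_2(W,M;\Z_p)$, it invokes the $p$-group analogue of Cochran--Harvey's rank inequality, $\dim_{\Z_p} H_2(W,M;\Z_p\Gamma_i)/L(\Z_p\Gamma_i)\le|\Gamma_i|\dim_{\Z_p}H_2(W,M;\Z_p)/L(\Z_p)$, which is what actually bounds the twisted cokernel by the untwisted one; this inequality is special to $\Z_p$-coefficients and $p$-groups and has no direct rational or $\Q(\zeta)$-coefficient analogue, so your purely characteristic-zero setup cannot reproduce it. The Betti-number hypothesis and an Euler characteristic count then give the aggregate bound $\dim_\Q L(\Q\Gamma_i)\ge\frac12\dim_\Q A(\Q\Gamma_i)$ (Claim~1), and a separate induction over the subquotients $\Gamma_0\subset\cdots\subset\Gamma_a$ (Claim~2) is needed to split this aggregate statement into the individual cyclotomic components; without that induction the excess could sit in low components while the $\zeta_{p^a}$-component is deficient, which is precisely the scenario your ``summing the dimensional counts'' does not rule out.

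A second, related gap: you assert $H_1(M;\Q(\zeta_{p^a}))\cong H_1(W;\Q(\zeta_{p^a}))$ via Levine's covering argument and use it to pin $\dim H_2(W;\Q(\zeta_{p^a}))=2r$ and to get nonsingularity of the form on all of $H_2(W;\Q(\zeta_{p^a}))$. Levine's argument only yields $H_1(W,M;\Z_{(p)}\Gamma)=0$, hence \emph{surjectivity} of $H_1(M;\cdot)\to H_1(W;\cdot)$ with twisted coefficients; injectivity is not automatic and in the paper's Covering Solution Theorem it is a \emph{consequence} of the same solvability-plus-Betti-count machinery, not an input. The paper sidesteps this entirely by working with $A(\sR)=\Im\{H_2(W;\sR)\to H_2(W,M;\sR)\}$ (the nonsingular part on which the Witt class is defined) and the four-term exact sequence $0\to A(\Q\Gamma_i)\to H_2(W,M;\Q\Gamma_i)\to H_1(M;\Q\Gamma_i)\to H_1(W;\Q\Gamma_i)\to0$, never claiming the form is nonsingular on the full twisted $H_2(W)$. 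So the proposal needs both the Cochran--Harvey-type mod-$p$ inequality (with the coprime-to-$p$ scaling trick that makes it applicable) and the component-by-component induction before the Lagrangian conclusion for the $\zeta_{p^a}$-component can be drawn.
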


Here $M_{\Gamma_i}$ is the $\Z_{p^i}$-cover of $M$ defined in the
paragraph before Theorem~\ref{theorem:n.5-solvaility-obstruction}.

By applying Covering Solution Theorem~\ref{theorem:covering-solution}
inductively $n$ times and applying
Theorem~\ref{theorem:0.5-solvability-obstruction} to the $n$th
iterated cover, the main theorem of this section
(Theorem~\ref{theorem:n.5-solvaility-obstruction}) follows
immediately.

\begin{proof}
  \setcounter{claim}{0}%
  Suppose $W$ is a ($\Z_{(p)}$-coefficient) $(0.5)$-solution of $M$.
  Since $H_1(M;\Z_{(p)})\cong H_1(W;\Z_{(p)})$, $\phi$ extends to
  $\pi_1(W)$ as in the proof of
  Theorem~\ref{theorem:covering-solution}.  Therefore
  $\lambda(M,\phi)$ is defined and can be computed from the
  intersection forms of~$W$.

  Since $W$ is a $(0.5)$-solution, there are $\tilde u_1,\ldots,\tilde
  u_r\in H_2(W;\Z_{(p)}[\pi/\pi^{(1)}])$ and $v_1,\ldots,u_r\in
  H_2(W;\Z_{(p)})$ such that $\lambda^{(1)}_W(\tilde u_i, \tilde
  u_j)=0$ and $\lambda_W(u_i,v_j)=\delta_{ij}$ where $\pi=\pi_1(W)$,
  $r=\frac12 \beta_2(W)$, and $u_i$ is the image of~$\tilde u_i$.
  Multiplying the $\tilde u_i$ and $v_j$ by some constant $c$ coprime
  to $p$, we may assume that the $u_i$ and $v_i$ are in the image of
  $H_2(W;\Z[\pi/\pi^{(1)}])$ and $H_2(W)$, at the cost of
  $\lambda_W(u_i,v_j)=c^2\delta_{ij}$ instead of $\delta_{ij}$.
  Choose the pre-images $\tilde x_i \in
  H_2(W;\Z_{(p)}[\pi/\pi^{(1)}])$ of $\tilde u_i$ and $y_j\in
  H_2(W;\Z_{(p)})$ of $v_j$, respectively.

  Denote $\omega_i=\zeta_{p^i}$.  Since $\Gamma_i$ is abelian, there
  is a canonically induced map $\Z[\pi/\pi^{(1)}] \to \Z\Gamma_i$, and
  $\Z\Gamma_i$, $\Z_p\Gamma_i$, $\Q\Gamma_i$, and $\Q(\omega_i)$ can
  be used as homology coefficients of $W$ (and $M$, $(W,M)$ as well).
  For $\sR= \Z\Gamma_i$, $\Z_p\Gamma_i$, $\Q\Gamma_i$, and
  $\Q(\omega_i)$, let
  \begin{align*}
    A(\sR) &= \Im\{H_2(W;\sR) \to H_2(W,M;\sR)\} \text{, and}\\
    L(\sR) &= \text{$\sR$-submodule of $A(\sR)$ generated by the image
      of~$\tilde x_i$.}
  \end{align*}
  Note that $\Gamma_0$ is the trivial group so that $\Z\Gamma_0=\Z$
  and similarly for $\Z_p$ and $\Q$ coefficients.
  
  \begin{claim}
    $\dim_\Q L(\Q\Gamma_i) \ge \frac12 \dim_\Q A(\Q\Gamma_i)$ for
    $i=0,1,\ldots,a$.
  \end{claim}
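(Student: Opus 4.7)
My plan is to split the inequality into matching estimates
$\dim_\Q A(\Q\Gamma_i)\le 2r|\Gamma_i|$ and $\dim_\Q L(\Q\Gamma_i)\ge r|\Gamma_i|$.
For the upper bound on $A$, I first invoke $\beta_2(W_{\Gamma_i})\le |\Gamma_i|\beta_2(W)=2r|\Gamma_i|$, as shown in Claim~1 of the proof of Theorem~\ref{theorem:covering-solution}; this applies because $H_2(W;\Z_{(p)})$ is $\Z_{(p)}$-free by Lemma~\ref{lemma:H_2-when-p-torsion-free-H_1}, using that $W$ is a $(0)$-solution and $H_1(M)$ is $p$-torsion free. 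To upgrade this to an equality I apply the hypothesis $\beta_1(M_{\Gamma_i})=|\Gamma_i|(\beta_1(M)-1)+1$ together with Levine's argument for $H_1$-isomorphisms under abelian $p$-covers (cf.\ Lemmas~3.2 and 3.3 of \cite{Cha:2007-1}), yielding $H_1(M_{\Gamma_i};\Q)\cong H_1(W_{\Gamma_i};\Q)$ and hence $\beta_1(W_{\Gamma_i})=\beta_1(M_{\Gamma_i})$. An Euler characteristic count $\chi(W_{\Gamma_i})=|\Gamma_i|\chi(W)$ then pins down $\beta_2(W_{\Gamma_i})=2r|\Gamma_i|$, and a Poincar\'e--Lefschetz duality argument shows that $H_2(W_{\Gamma_i};\Q)\to H_2(W_{\Gamma_i},M_{\Gamma_i};\Q)$ is an isomorphism, giving $\dim_\Q A(\Q\Gamma_i)=2r|\Gamma_i|$ and non-singularity of the ordinary $\Q$-intersection form on $W_{\Gamma_i}$.

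For the lower bound I would exhibit the generators $\tilde x_i^{(i)}\cdot\gamma$ (for $i=1,\dots,r$ and $\gamma\in\Gamma_i$) of $L(\Q\Gamma_i)$ and argue they are $\Q$-linearly independent in $A(\Q\Gamma_i)$. Pushing the $(0.5)$-solvability identity $\lambda^{(1)}_W(\tilde u_i,\tilde u_j)=0$ along $\Z[\pi/\pi^{(1)}]\to\Z\Gamma_i$ gives $\lambda^{\Gamma_i}_W(\tilde x_i^{(i)},\tilde x_j^{(i)})=0$, and Lemma~\ref{lemma:intersection-of-cover} translates this to vanishing of the ordinary intersection numbers $\lambda_{W_{\Gamma_i}}(\tilde x_i^{(i)}\gamma,\tilde x_j^{(i)}\gamma')=0$ for all $\gamma,\gamma'\in\Gamma_i$. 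Thus the generators span an isotropic subspace of the non-singular $\Q$-form on $A(\Q\Gamma_i)$, automatically of dimension at most $r|\Gamma_i|$, and the task reduces to ruling out linear dependence among these $r|\Gamma_i|$ elements.

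This linear independence is the main technical obstacle. Via the decomposition $\Q\Gamma_i\cong\prod_{j=0}^{i}\Q(\zeta_{p^j})$, and expanding the $\beta_1$-hypothesis character-by-character, one finds inductively that $\dim_{\Q(\zeta_{p^j})} H_1(M;\Q(\zeta_{p^j}))=\beta_1(M)-1$ and hence $\dim_{\Q(\zeta_{p^j})} A^{(j)}=2r$ for each $j$; the claim reduces to showing that in every character factor the $r$ images of $\tilde u_i$ are $\Q(\zeta_{p^j})$-linearly independent in the $2r$-dimensional space $A^{(j)}$. For the trivial character ($j=0$) this is immediate from the hyperbolic pairing $u_i\cdot v_j=c^2\delta_{ij}$. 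For non-trivial characters, the transfer $\mathrm{tr}(v_j)$ is $\Gamma_i$-invariant and lives entirely in the trivial character component, so it does not by itself provide duals. My intended approach here is to leverage the $\Q\Gamma_i$-sesquilinear structure of the pairing via Lemma~\ref{lemma:intersection-of-cover}, combined with a character-by-character analysis of the isotropic $\Q\Gamma_i$-submodule $L(\Q\Gamma_i)$ of the non-singular hermitian $\Q\Gamma_i$-form on $A(\Q\Gamma_i)$, to conclude that $L(\Q\Gamma_i)$ is free of $\Q\Gamma_i$-rank $r$, hence of total $\Q$-dimension $r|\Gamma_i|$.

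Combining the two estimates then gives $\dim_\Q L(\Q\Gamma_i)=r|\Gamma_i|=\tfrac12\dim_\Q A(\Q\Gamma_i)$, as claimed.
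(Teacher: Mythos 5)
There is a genuine gap, and it sits exactly where you yourself flag ``the main technical obstacle''. Your strategy rests on two absolute statements: $\dim_\Q A(\Q\Gamma_i)=2r|\Gamma_i|$ and $\dim_\Q L(\Q\Gamma_i)\ge r|\Gamma_i|$. Neither is proved, and neither follows from the hypotheses. For the first: Levine's argument gives only $H_1(W_{\Gamma_i},M_{\Gamma_i};\Z_{(p)})=0$, i.e.\ \emph{surjectivity} of $H_1(M_{\Gamma_i};\Z_{(p)})\to H_1(W_{\Gamma_i};\Z_{(p)})$; injectivity is not automatic, and for a mere $(0.5)$-solution there are no classes over $\Q\Gamma_i$ dual to the $\tilde u_i$ (the $v_j$ exist only with untwisted coefficients), so the mechanism that makes Theorem~\ref{theorem:covering-solution} work is unavailable --- that theorem requires $h\ge 1$ for precisely this reason. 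Consequently the Euler characteristic identity $\chi(W_{\Gamma_i})=|\Gamma_i|\chi(W)$ together with the hypothesis on $\beta_1(M_{\Gamma_i})$ does not ``pin down'' $\beta_2(W_{\Gamma_i})=|\Gamma_i|\beta_2(W)$; it only shows that the defect $|\Gamma_i|\beta_2(W)-\beta_2(W_{\Gamma_i})$ equals the defect $\beta_1(M_{\Gamma_i})-\beta_1(W_{\Gamma_i})$, and nothing forces either to vanish. Your second statement is then in the same boat: since the form on $A(\Q\Gamma_i)$ is nonsingular (componentwise over the fields $\Q(\omega_j)$) and vanishes on $L(\Q\Gamma_i)$, one always has $\dim_\Q L(\Q\Gamma_i)\le\frac12\dim_\Q A(\Q\Gamma_i)$, so if $\dim_\Q A(\Q\Gamma_i)<2r|\Gamma_i|$ then $L(\Q\Gamma_i)$ cannot be free of rank $r$. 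In other words, the freeness/linear-independence you hope to prove character-by-character is not a harmless strengthening to be filled in later: in nontrivial character summands there are no duals at all (as you note, the transfers of the $v_j$ land in the trivial summand), and the statement you are aiming at need not even be true.

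What closes the gap in the paper is a \emph{relative} bound rather than these absolute computations: the $p$-group analogue of Cochran--Harvey's Proposition~2.7, namely $\dim_{\Z_p}\bigl(H_2(W,M;\Z_p\Gamma_i)/L(\Z_p\Gamma_i)\bigr)\le|\Gamma_i|\dim_{\Z_p}\bigl(H_2(W,M;\Z_p)/L(\Z_p)\bigr)$. The only place the duals $v_j$ are used is over $\Z_p$: they show $\dim_{\Z_p}L(\Z_p)=\frac12\beta_2(W)$ (here the coprimality of $c$ to $p$ matters), which bounds the right-hand side by $\frac{|\Gamma_i|}{2}\beta_2(W)$. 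After comparing $\Z$, $\Z_p$, and $\Q$ coefficients via $H_1(W,M;\Z_{(p)}\Gamma_i)=0$, this yields $\dim_\Q L(\Q\Gamma_i)\ge\dim_\Q H_2(W,M;\Q\Gamma_i)-\frac{|\Gamma_i|}{2}\beta_2(W)$, and then the exact sequence computing $\dim_\Q A(\Q\Gamma_i)=\beta_2(W_{\Gamma_i})-\beta_1(M_{\Gamma_i})+\beta_1(W_{\Gamma_i})$, the hypothesis $\beta_1(M_{\Gamma_i})=|\Gamma_i|(\beta_1(M)-1)+1$, and $\chi(W_{\Gamma_i})=|\Gamma_i|\chi(W)$ give $2\dim_\Q L(\Q\Gamma_i)-\dim_\Q A(\Q\Gamma_i)\ge 0$ without ever determining the individual dimensions. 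Your outline neither invokes such a codimension inequality nor supplies a substitute for it, so the crucial lower bound on $\dim_\Q L(\Q\Gamma_i)$ remains unestablished.
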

  
  To prove Claim 1, we need the following fact, which is a
  $p$-analogue of a result due to Cochran and
  Harvey~\cite[Proposition~2.7]{Cochran-Harvey:2006-01}:
  \[
  \dim_{\Z_p} \frac{H_2(W,M;\Z_p\Gamma_i)}{L(\Z_p\Gamma_i)} \le
  |\Gamma_i| \dim_{\Z_p} \frac{H_2(W,M;\Z_p)}{L(\Z_p)}
  \tag{$*$}
  \]
  Since the proof of \cite[Proposition~2.7]{Cochran-Harvey:2006-01}
  carries over to our $p$-group case (using $p$-analogous statements
  when necessary; see also~\cite{Cochran-Harvey:2007-01}), we omit
  details of the proof of~$(*)$.

  Since $H_1(M;\Z_{(p)}) \cong H_1(W;\Z_{(p)})$,
  $H_1(W,M;\Z_{(p)})=0$.  Consequently $H_1(W,M)$ is $p$-torsion free.
  Also, observe that $H_1(W)$ is $p$-torsion free since
  $H_1(W;\Z_{(p)})\cong H_1(M;\Z_{(p)})$ has no torsion by the
  hypothesis.  So,
  \begin{align*}
    H_2(W,M;\Z_{(p)})&=H^2(W;\Z_{(p)})=\Hom(H_2(W),\Z_{(p)})\oplus
    \Ext(H_1(W),\Z_{(p)})\\
    &=\Hom(H_2(W),\Z_{(p)})
  \end{align*}
  has no torsion, and therefore $H_2(W,M)$ is $p$-torsion free.  From
  these observations, it follows that
  \[
  \dim_{\Z_p} H_2(W,M;\Z_p)=\rank_\Z H_2(W,M)/\text{torsion}=\beta_2(W).
  \]

  Recall that $L(\Z_p)$ is generated by the images of the
  $\frac12\beta_2(W)$ elements $\tilde x_i$.  Since there are the dual
  elements, namely the images of the $y_j$ whose intersection with the
  image $\tilde x_i$ is $c^2\delta_{ij}$, and since our constant $c$ is
  coprime to $p$, it follows that the images of the $\tilde x_i$ are
  $\Z_p$-linearly independent in $H_2(W,M;\Z_p)$ so that $\dim_{\Z_p}
  L(\Z_p) = \frac12\beta_2(W)$.  Therefore, by $(*)$, we have
  \[
  \dim_{\Z_p} \frac{H_2(W,M;\Z_p\Gamma_i)}{L(\Z_p\Gamma_i)} \le
  \frac{|\Gamma_i|}{2} \beta_2(W).
  \tag{$**$}
  \]

  By Levine's result \cite[Proof of Proposition 3.2]{Levine:1994-1}
  (refer to \cite[Lemma 3.2 and 3.3]{Cha:2007-1}, \cite[Corollary
  4.13]{Cochran-Harvey:2007-01} for statements that apply to our case
  directly), $H_1(W,M;\Z_{(p)})=0$ implies that
  $H_1(W,M;\Z_{(p)}\Gamma_i)=0$.  So $H_1(W,M;\Z\Gamma_i)$ is
  $p$-torsion free, and
  \[
  H_2(W,M;\Z_p\Gamma)=H_2(W,M;\Z\Gamma)\otimes_\Z \Z_p.
  \]
  Since ${}\otimes_\Z\Z_p$ preserves cokernels, we have
  \begin{align*}
    \frac{H_2(W,M;\Z_p\Gamma_i)}{L(\Z_p\Gamma_i)} &=
    \Coker\{L(\Z\Gamma_i) \otimes_\Z \Z_p \to H_2(W,M;\Z\Gamma_i)
    \otimes_\Z \Z_p \} \\
    &= \Coker\{L(\Z\Gamma_i) \to H_2(W,M;\Z\Gamma_i) \} \otimes_\Z \Z_p \\
    &= \frac{H_2(W,M;\Z\Gamma_i)}{L(\Z\Gamma_i)} \otimes_\Z \Z_p
  \end{align*}
  and similarly
  \[
  \frac{H_2(W,M;\Q\Gamma_i)}{L(\Q\Gamma_i)}
  = \frac{H_2(W,M;\Z\Gamma_i)}{L(\Z\Gamma_i)} \otimes_\Z \Q.
  \]
  By $(**)$, it follows that
  \[
  \dim_{\Q} \frac{H_2(W,M;\Q\Gamma_i)}{L(\Q\Gamma_i)} \le
  \dim_{\Z_p} \frac{H_2(W,M;\Z_p\Gamma_i)}{L(\Z_p\Gamma_i)}
  \le \frac{|\Gamma_i|}{2} \beta_2(W).
  \tag{$*\mathord{*}*$}
  \]
  Since $H_1(W,M;\Z_{(p)}\Gamma_i)=0$ as shown above, we have an exact
  sequence
  \[
  0 \to A(\Q\Gamma_i) \to H_2(W,M;\Q\Gamma_i) \to H_1(M;\Q\Gamma_i)
  \to H_1(W;\Q\Gamma_i) \to 0
  \]
  and from this it follows that
  \[
  \dim_\Q A(\Q\Gamma_i) = \beta_2(W_{\Gamma_i}) -
  \beta_1(M_{\Gamma_i}) + \beta_1(W_{\Gamma_i}).
  \]
  Note that $\beta_1(M)=\beta_1(W)$, and by our hypothesis,
  $\beta_1(M_{\Gamma_i})=|\Gamma_i|(\beta_1(M)-1)+1$.  Combining this
  with $(*\mathord{*}*)$, we obtain
  \begin{multline*}
    2\dim_\Q L(\Q\Gamma_i) -\dim_\Q A(\Q\Gamma_i) \\
    \begin{aligned}[t]
      &\ge (\beta_2(W_{\Gamma_i})-\beta_1(W_{\Gamma_i})+1) - |\Gamma_i|
      (\beta_2(W)-\beta_1(W)+1) \\
      &= \chi(W_{\Gamma_i})-|\Gamma_i| \chi(W) = 0
    \end{aligned}
  \end{multline*}
  since $\beta_3(W)=\beta_1(W,M)=0$ and $\beta_3(W_{\Gamma_i})=0$
  similarly.  This finishes the proof of Claim~1.
   
  \begin{claim}
    $ \dim_{\Q(\omega_i)} L(\Q(\omega_i)) = \frac12
    \dim_{\Q(\omega_i)} A(\Q(\omega_i)) $ for $i=0,1,\ldots,a$.
  \end{claim}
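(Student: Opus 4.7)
The plan is to combine the lower bound in Claim~1 with a Lagrangian-type upper bound coming from the isotropy of the $\tilde x_i$, so that the two inequalities are forced to be equalities termwise under a suitable module decomposition.

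First I would exploit the semisimple ring decomposition $\Q\Gamma_i = \bigoplus_{j=0}^{i} \Q(\omega_j)$, reflecting the factorization $t^{p^i}-1 = \prod_{j=0}^{i} \Phi_{p^j}(t)$. Since $\Q(\omega_j)$ is a direct summand, hence flat, over $\Q\Gamma_i$, tensoring commutes with image and submodule formation and yields $A(\Q\Gamma_i) = \bigoplus_{j=0}^{i} A(\Q(\omega_j))$ and $L(\Q\Gamma_i) = \bigoplus_{j=0}^{i} L(\Q(\omega_j))$ as $\Q\Gamma_i$-modules. Taking $\Q$-dimensions termwise, Claim~1 at index $i$ rewrites as the weighted inequality
\[
\sum_{j=0}^{i} [\Q(\omega_j):\Q]\,\Bigl(\dim_{\Q(\omega_j)} L(\Q(\omega_j)) - \tfrac12 \dim_{\Q(\omega_j)} A(\Q(\omega_j))\Bigr) \ge 0.
\]

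Next I would establish the opposite pointwise bound $\dim_{\Q(\omega_j)} L(\Q(\omega_j)) \le \tfrac12 \dim_{\Q(\omega_j)} A(\Q(\omega_j))$ by showing that $L(\Q(\omega_j))$ sits isotropically inside a nonsingular Hermitian form on $A(\Q(\omega_j))$. Over the field $\Q(\omega_j)$, Poincar\'e-Lefschetz duality identifies $H_2(W,M;\Q(\omega_j))$ with $\Hom_{\Q(\omega_j)}(H_2(W;\Q(\omega_j)),\Q(\omega_j))$, making the natural map $H_2(W;\Q(\omega_j)) \to H_2(W,M;\Q(\omega_j))$ the adjoint of the intersection form; its kernel equals the radical of the form, so the form descends to a nonsingular Hermitian form on the image $A(\Q(\omega_j))$. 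Moreover, the defining relation $\lambda_W^{(1)}(\tilde u_i,\tilde u_j)=0$ is preserved under the ring map $\Z[\pi/\pi^{(1)}] \to \Z\Gamma_j \to \Q(\omega_j)$ coming from the abelianization $\pi \to \Gamma_j$, so by naturality the images of the $\tilde x_i$ in $A(\Q(\omega_j))$, which generate $L(\Q(\omega_j))$, pair trivially with each other. The standard half-dimension bound for an isotropic subspace of a nonsingular Hermitian form then delivers the desired inequality.

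Combining the two, each summand in the weighted inequality is nonpositive while they add to a nonnegative total, so every summand must vanish. Specializing to $i=a$ yields $\dim_{\Q(\omega_j)} L(\Q(\omega_j)) = \tfrac12 \dim_{\Q(\omega_j)} A(\Q(\omega_j))$ for every $j=0,1,\ldots,a$, which is Claim~2. The main technical subtlety to verify carefully is the identification of the radical of the $\Q(\omega_j)$-valued intersection form on $H_2(W;\Q(\omega_j))$ with $\ker(H_2(W;\Q(\omega_j)) \to H_2(W,M;\Q(\omega_j)))$, so that the form really does descend to a nonsingular Hermitian form on $A(\Q(\omega_j))$; this is a consequence of the compatibility of the Poincar\'e-Lefschetz pairing with the long exact sequence of $(W,M)$, but merits an explicit check in our setting.
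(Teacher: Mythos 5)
Your proposal is correct and takes essentially the same route as the paper: the same semisimple decomposition $\Q\Gamma_i \cong \bigoplus_{j=0}^{i}\Q(\omega_j)$ with flatness giving $A(\Q\Gamma_i)=\bigoplus_j A(\Q(\omega_j))$ and $L(\Q\Gamma_i)=\bigoplus_j L(\Q(\omega_j))$, Claim~1 as the lower bound, and the half-dimension bound coming from $L(\Q(\omega_j))$ being isotropic in the nonsingular form on $A(\Q(\omega_j))$ (which the paper asserts in one line and you justify via Poincar\'e--Lefschetz duality and naturality of $\lambda_W^{(n+1)}$ under $\Z[\pi/\pi^{(1)}]\to\Q(\omega_j)$). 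The only difference is organizational: the paper runs an induction on $i$, while you observe that termwise nonpositivity together with the nonnegative weighted total already forces every summand to vanish, a slight streamlining rather than a different argument.
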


  To prove this, we use an induction on $i$.  First observe that $\le$
  always holds since the $\Q(\omega_i)$-valued intersection form on
  $A(\Q(\omega_i))$ is nonsingular but vanishes on $L(\Q(\omega_i))$.
  So it suffices to show~$\ge$.  For $i=0$, this is none more than
  Claim~1 for $i=0$.

  Suppose $k>0$ and that Claim~2 holds for all $i < k$.  Recall that
  the regular representation $\Q[\Z_d]$ is isomorphic to the
  orthogonal sum $\bigoplus_{r|d} \Q(\zeta_r)$.  It follows that for
  any $j<i$, $\Q(\omega_j)$ are $(\Q\Gamma_i)$-flat.  So, we have
  \begin{align*}
    H_2(W;\Q(\omega_j)) &= H_2(W;\Q\Gamma_i) \otimes_{\Q\Gamma_i} 
    \Q(\omega_j), \\
    H_2(W,M;\Q(\omega_j)) &= H_2(W,M;\Q\Gamma_i) \otimes_{\Q\Gamma_i}
    \Q(\omega_j),
  \end{align*}
  and therefore
  \[
  A(\Q(\omega_j)) = A(\Q\Gamma_i) \otimes_{\Q\Gamma_i} \Q(\omega_j)
  \quad\text{and}\quad
  L(\Q(\omega_j)) = L(\Q\Gamma_i) \otimes_{\Q\Gamma_i} \Q(\omega_j).
  \]
  Using this, we have
  {
    \def\ot{\mathbin{\mathop{\otimes}\limits_{\Q\Gamma_k}}}
    \begin{align*}
      L(\Q\Gamma_k) &= L(\Q\Gamma_k) \ot \Q\Gamma_i = L(\Q\Gamma_k)
      \ot \bigg(\bigoplus_{j=0}^k \Q(\omega_i) \bigg) \\
      &= \bigoplus_{j=0}^k L(\Q\Gamma_k) \ot \Q(\omega_i)
      = \bigoplus_{j=0}^k L(\Q(\omega_j))
    \end{align*} 
  }
  and so
  \[
  \dim_\Q L(\Q\Gamma_k) = \sum_{j=0}^{k} [\Q(\omega_j):\Q]
  \dim_{\Q(\omega_j)} L(\Q(\omega_j)).
  \]
  Similarly, we have an analogous formula for $\dim_\Q A(\Q\Gamma_k)$,
  which is obtained by replacing every occurrence of $L(-)$ with
  $A(-)$.  By Claim~1 for $i=k$ and by the induction hypothesis, we
  have
  \[
  \dim_{\Q(\omega_k)} L(\Q(\omega_k)) \ge \frac12 \dim_{\Q(\omega_k)}
  A(\Q(\omega_k))
  \]
  as desired.  This completes the proof of Claim~2.
  
  Now, the nonsingular part of the $\Q(\omega_i)$-valued intersection
  form $\lambda_{\Q(\omega_i)}(W)$, which is defined on $A(\Q(w_i))$,
  vanishes on a half-dimensional subspace, namely $L(\Q(w_i))$.  It
  follows that the Witt class $[\lambda_{\Q(\omega_i)}(W)]$ is trivial
  in $L^0(\Q(\omega_i))$ for $i=0,1,\ldots,a$.  Denoting the natural
  map $L^0(\Q) \to L^0(\Q(\zeta_{p^a}))$ by $i_*$, by definition we
  have
  \[
  \lambda(M,\phi)=[\lambda_{\Q(\zeta_{p^a})}(W)]-i_*[\lambda_\Q(W)]=0.
  \qedhere
  \]
\end{proof}

\section{Local knots}
\label{section:local-knots}

In order to investigate the structure of link concordance modulo knot
concordance, we consider the quotient of string link concordance group
by ``local knots''; Figure~\ref{figure:local-knot} illustrates the
effect of adding a local knot to a string link.

\begin{figure}[ht]
  \begin{center}
    \includegraphics{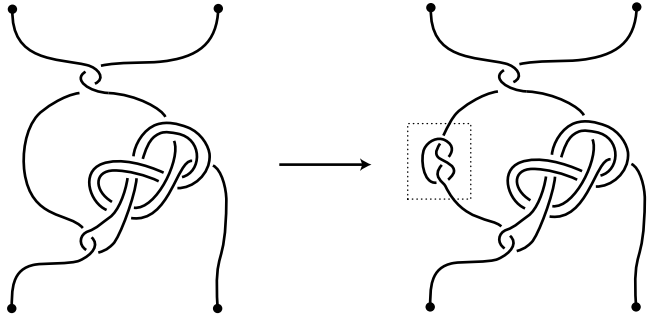}
    \caption{Adding a local knot}
    \label{figure:local-knot}
  \end{center} 
\end{figure}

It can be seen easily that adding a local knot to an $m$-string link
is equivalent to multiplication by a string link $\beta$ which is the
completely split union of one (possibly knotted) arc and $m-1$
unknotted line segments.  Possibly as a slight abuse of terminology,
we call $\beta$ a \emph{local knot}.  Note that a local knot $\beta$
commutes with any string link, so that any subgroup generated by a
collection of local knots is normal.

In this section we show that if a $p$-structure $\T$ is locally
trivial, our invariant $\lambda_\T$ ignores the effect of
adding/removing local knots.  Recall from the introduction that
$\T=(\{X_i\},\phi)$ of height~$n$ is called \emph{locally trivial} if
$\phi(z)=0$ whenever $z$ is an element in $\pi_1(X_n)$ which projects
to a conjugate of $[x_i]^r$ in $\pi_1(X)$ for some~$r$.  (Here $x_i$
denotes the $i$th circle of~$X$.)

\begin{remark}
  \label{remark:alternative-description-of-local-triviality}
  We often view a character (into an abelian group) as a homomorphism
  of $H_1$, so that $\theta([\gamma])$ is well-defined for the
  homology class $[\gamma]$ of a loop $\gamma$ based at any point
  in~$X_n$.  Then it can be seen easily that $\T$~is locally trivial
  if and only if $\theta([\gamma])=0$ whenever $\gamma$ is a loop in
  $X_n$ whose projection in $X$ is freely homotopic to $x_i^r$ for
  some~$r$.  Moreover, since a free homotopy in $X$ lifts to $X_n$ and
  $\theta$ is preserved by free homotopy, ``freely homotopic to'' in
  the above statement can be omitted.
\end{remark}

\begin{theorem}
  \label{theorem:vanishing-for-local-knots}
  If a $p$-structure $\T$ for $X$ is locally trivial, then
  $\lambda_\T(\beta)=0$ for any local knot~$\beta$.
\end{theorem}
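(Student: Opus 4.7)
My plan is to work directly with a local knot $\beta$ (which by definition has only one possibly knotted strand, say strand $i$; by the symmetry of the local triviality condition in all strands we may assume $i=1$). The closure of $\beta$ is then the split link $K\sqcup U^{m-1}$ for some knot $K$, so $M_\beta=M_K\,\#\,\#^{m-1}(S^1\times S^2)$, and the preferred meridian map sends $x_1$ to the meridian $\mu_K$ of $K$ in the $M_K$-summand.

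Next I will construct a $\Z_d$-bordism of $(M_n,\phi)$ whose existence is made possible precisely by local triviality. Concretely, I will build a 4-manifold $W$ with $\partial W = M_n\sqcup\tilde N$ by attaching to $M_n\times I$ a 2-handle along each component of the preimage of $\mu_K$ in $M_n\times\{1\}$, with the $0$-framing pulled back from the $0$-framing of $\mu_K$ in $M_\beta$ (the framing for which the analogous surgery downstairs reverses the $0$-surgery on $K$). Each attaching circle projects to a conjugate of a power of $\mu_K$ downstairs, so by the $H_1$-level formulation of local triviality (Remark~\ref{remark:alternative-description-of-local-triviality}) the character $\phi$ vanishes on it and therefore extends uniquely over the 2-handle. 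The surgery downstairs converts $M_\beta$ into $N=\#^{m-1}(S^1\times S^2)$, so $\tilde N$ is identified with a cover of $N$ carrying the restricted character $\phi|_{\tilde N}$, with the character (though not necessarily the full $p$-tower) descending through the surgery because of the same triviality.

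To conclude $\lambda(M_n,\phi)=0$, I will verify separately that (i) $\lambda(\tilde N,\phi|_{\tilde N})=0$ and (ii) the bordism defect $[\lambda_{\Q(\zeta_d)}(W)]-[\lambda_\Q(W)]=0$. For (i), the 3-manifold $N$ bounds the boundary-connected-sum of $m-1$ copies of $S^1\times D^3$, a 4-manifold with the homotopy type of a 1-complex; its covers therefore have vanishing $H_2$ with any abelian local coefficient system, yielding a null-bordism of $\tilde N$ with trivial intersection-form Witt classes in both $\Q$ and $\Q(\zeta_d)$. For (ii), the 2-handle attaching circles are nonzero in $H_1(M_n)$ (they project nontrivially to the $\mu_K$-direction of $H_1(M_\beta)=\Z^m$), so by a Mayer--Vietoris computation they kill $H_1$-classes without contributing to $H_2$; consequently $H_2(W)\cong H_2(M_n\times I)$, on which the intersection form vanishes by pushing classes apart in the $I$-direction. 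The same holds with $\Q(\zeta_d)$-coefficients because $\phi$ vanishes on the attaching circles, so the local system lifts trivially across the handles.

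The main obstacle will be the careful verification of the Mayer--Vietoris computation with local coefficients in step (ii) and the clean identification of $\tilde N$ as a cover of $N$ with a compatible character, since the full $p$-tower on $M_\beta$ need not descend through the surgery; both points ultimately hinge on the $\phi$-triviality of meridian lifts, which is exactly what the hypothesis of local triviality supplies.
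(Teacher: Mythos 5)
Your strategy is genuinely different from the paper's, which treats a local knot as the result of infection on the trivial string link along a meridian and then applies Proposition~\ref{proposition:lambda-of-infected-manifold}: local triviality gives $\phi([\tilde\alpha_j])=0$ for every lift, so each correction term $[\lambda_{r_j}(A,\zeta_d^{\phi([\tilde\alpha_j])})]-[\lambda_{r_j}(A,1)]$ cancels identically. The gap in your argument is in step (i). The upper boundary $\tilde N$ of your handle cobordism is in general \emph{not} a cover of $N=\#^{m-1}(S^1\times S^2)$: a component $\tilde\mu_j$ of the preimage of $\mu_K$ covers $\mu_K$ with some degree $r_j$, and when $r_j>1$ the covering of the peripheral torus (corresponding to the subgroup generated by $r_j\ell$ and $m$, where $\ell$ is the filling slope and $m$ its meridian) does not extend over the filling solid torus downstairs, since that would require the subgroup to contain $\ell$. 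So your upstairs filling is a ``wrapped'' filling, not the pullback of the surgery on $M_\beta$. Concretely, the pieces of $\tilde N$ sitting over the infection knot's exterior are $r_j$-fold cyclic covers of $E_K$ Dehn-filled along the connected lift of the meridian; these are branched-cover-like manifolds, and any 4-manifold they bound contributes intersection forms of exactly the type $\lambda_{r_j}(A,1)$ of Proposition~\ref{proposition:lambda-of-infected-manifold} (for $r_j=2$ this is Witt equivalent to $A+A^T$, which is nonzero for, say, a trefoil). Hence no null-bordism of $\tilde N$ with vanishing $H_2$ in both coefficient systems exists in general; the true statement $\lambda(\tilde N,\psi)=0$ holds only because the twisted and untwisted Witt classes of a bounding 4-manifold cancel when the character is trivial on these pieces, and proving that cancellation is precisely the content of the infection formula your construction was meant to bypass. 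You flagged ``the identification of $\tilde N$ as a cover of $N$'' as a verification to be done, but it is not a technicality: it fails whenever some $r_j>1$, which is the typical and the interesting case.

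There is also a smaller, repairable gap in step (ii): to get $H_2(M_n;\Q)\to H_2(W;\Q)$ surjective (and likewise with $\Q(\zeta_d)$-coefficients) you need the attaching circles to be \emph{linearly independent} in $H_1(M_n;\Q)$, resp.\ in the twisted $H_1$, not merely individually nonzero; two distinct lifts of the same power of $\mu_K$ have equal image in $H_1(M_\beta;\Q)$, so ``projecting nontrivially to the $\mu_K$-direction'' does not rule out relations among them, and a relation would create extra classes in $H_2(W)$ carried by the handle cores, with self-intersections governed by framings and equivariant linking. This can be fixed: since $X\to M_\beta$ is a $p$-tower map, the argument in the proof of Lemma~\ref{lemma:well-definedness-of-lambda_T} (isomorphisms mod $p^r$ for all $r$, plus $p$-torsion-freeness of $H_1(M_k)$) shows $H_1(X_k;\Q)\to H_1(M_k;\Q)$ is an isomorphism at each stage, and the lifts of $x_i$ are disjoint cycles in the graph $X_k$, hence independent; a similar comparison one level up handles the $\Q(\zeta_d)$-coefficient case. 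But this repair does not rescue step (i), so as it stands the proof is incomplete.
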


As an immediate consequence of
Theorems~\ref{theorem:additivity-of-lambda_T},
\ref{theorem:n.5-solvaility-obstruction}, and
\ref{theorem:vanishing-for-local-knots}, we obtain Addendum to
Theorem~\ref{theorem:lambda_T-homomorophism} and the second part of
Theorem~\ref{theorem:COT-filtration-and-lambda_T}: If $\T$ is locally
trivial, then $\lambda_\T$ induces a homomorphism
\[
\lambda_\T\colon \frac{\hCSL}{\langle\text{local knots}\rangle} \to
L^0(\Q(\zeta_d)).
\]
If, in addition, $\T$ is of height $n$, then $\lambda_\T$ induces a
homomorphism
\[
\lambda_\T\colon \frac{\hFSL_{(n)}}{\hFSL_{(n.5)} \cdot
  \langle\text{local knots}\rangle} \subset \frac{\hCSL}{\hFSL_{(n.5)}
  \cdot \langle\text{local knots}\rangle} \to L^0(\Q(\zeta_d)).
\]

To prove Theorem~\ref{theorem:vanishing-for-local-knots}, we appeal to
the following result, which will also be used in later sections as our
main computation tool for 3-manifolds and (string) links which are
obtained by \emph{infection}.  Suppose $M$ is a 3-manifold and
$\alpha$ is a simple closed curve in~$M$.  Removing an open tubular
neighborhood of $\alpha$ from $M$ and then filling it in with the
exterior of a knot $K$ in $S^3$ so that the meridian and preferred
longitude of $K$ is identified with a longitude and meridian of
$\alpha$, respectively, we obtain a new 3-manifold~$N$.  We say that
$N$ is obtained from $M$ by infection along $\alpha$ using~$K$.
In~\cite[Proposition~4.8]{Cha:2007-1}, it was shown that there is a
$p$-tower map $N\to M$ which extends the identity map between
$N-($exterior of $K)$ and $M-($tubular neighborhood of $\alpha)$.

\begin{proposition}[Corollary 4.7 of \cite{Cha:2007-1}]
  \label{proposition:lambda-of-infected-manifold}
  Suppose $N$ is obtained from $M$ by infection along a simple closed
  curve $\alpha$ using a knot~$K$.  Suppose $(\{M_i\},\phi)$ is a
  $\Z_d$-valued $p$-structure of height $n$ and let $(\{N_i\},\psi)$
  be the induced $p$-structure via pullback along the $p$-tower map
  $N\to M$.  Let $\tilde\alpha_1, \tilde\alpha_2,\ldots \subset M_n$
  be the components of the pre-image of $\alpha\subset M$, and $r_j$
  be the degree of the covering map $\tilde\alpha_j \to \alpha$.  Then
  \[  
  \lambda(N_n,\psi)=\lambda(M_n,\phi)+\sum_{j} \Big(
  [\lambda_{r_{j}}(A,\zeta_d^{\phi([\tilde\alpha_j])})]-[\lambda_{r_{j}}(A,1)]
  \Big)
  \]
  where $[\lambda_{r}(A,\omega)]$ is the Witt class of (the
  nonsingular part of) the hermitian form represented by the following
  $r\times r$ block matrix:
  \[
  \lambda_{r}(A,\omega) =
  \begin{bmatrix}
    \vphantom{\ddots} A+A^T & -A & & -\omega^{-1} A^T\\
    \vphantom{\ddots} -A^T & A+A^T & \ddots \\
    \vphantom{\ddots} & \ddots & \ddots & -A \\
    \vphantom{\ddots} -\omega A & & -A^T & A+A^T
  \end{bmatrix}_{r\times r}
  \]
  For $r=1$ and $2$, $\lambda_{r}(A,\omega)$ should be understood as
  \[
  \begin{bmatrix}
    (1-\omega)A+(1-\omega^{-1})A^T
  \end{bmatrix}
  \quad\text{and}\quad
  \begin{bmatrix}
    A+A^T & -A-\omega^{-1}A^T \\
    -A^T-\omega A & A+A^T
  \end{bmatrix}.
  \]
\end{proposition}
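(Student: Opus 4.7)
My plan is to realize the infection as a 4-dimensional cobordism, lift the $p$-tower and character to the cobordism, and then use Novikov additivity to reduce the identity to a local intersection-form computation on a cyclic cover of the infection piece.

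First, I would construct an ``infection cobordism'' $C$ with $\partial C = M\sqcup(-N)$ that equals the product $(M\setminus\inte N(\alpha))\times[0,1]$ away from a neighborhood of $\alpha$, and whose non-product piece $C'$ interpolates between the solid torus $N(\alpha)$ and the knot exterior $E_K$; concretely, $C'$ can be built by corner-smoothing the trace of the zero-framed surgery on $K$, so that the intersection form of the relevant cyclic covers of $C'$ is governed by a Seifert surface $F$ of $K$ with Seifert matrix~$A$.

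Next, since the collapse map $N\to M$ is a $p$-tower map by Proposition~4.8 of~\cite{Cha:2007-1}, the same argument supplies a $p$-tower $\{C_k\}$ for $C$ whose restrictions to $M$ and $N$ are $\{M_k\}$ and $\{N_k\}$, and $\phi$ extends canonically to a $\Z_d$-character on $\pi_1(C_n)$ restricting to $\psi$ on $\pi_1(N_n)$. Because the infection identifies the meridian $\mu_K$ of $K$ with the longitude of $\alpha$, the value of the extended character on a lift of $\mu_K$ in the $j$th infection piece equals $\phi([\tilde\alpha_j])$. Given any 4-manifold $W$ bounding $(M_n,\phi)$ used to compute $\lambda(M_n,\phi)$, the union $V=W\cup_{M_n}C_n$ bounds $(N_n,\psi)$, and Novikov additivity of Witt classes for both $\Q(\zeta_d)$- and $\Q$-coefficients gives
\[
\lambda(N_n,\psi)-\lambda(M_n,\phi) = \bigl[\lambda_{\Q(\zeta_d)}(C_n)\bigr]-\bigl[\lambda_\Q(C_n)\bigr].
\]
Since $C_n\to C$ is the trivial product cover on the product part, the right-hand side splits as a direct sum indexed by the components $\tilde\alpha_j$ of the preimage of $\alpha$ in $M_n$, the $j$th summand being the difference of Witt classes of the intersection form of the $r_j$-fold cyclic cover of $C'$ with $\Q(\zeta_d)$- and $\Q$-coefficients. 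The core computation is then that this $r_j$-fold cyclic cover has intersection form represented by $\lambda_{r_j}(A,\omega)$ with $\omega=\zeta_d^{\phi([\tilde\alpha_j])}$ (twisted) or $\omega=1$ (untwisted): taking $r_j$ copies $F_0,\dots,F_{r_j-1}$ of the Seifert surface glued cyclically via the monodromy produces the diagonal blocks $A+A^T$, the nearest-off-diagonal blocks $-A$ and $-A^T$, and the corner entries $-\omega^{-1}A^T$ and $-\omega A$ from the wrap-around twist.

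The main obstacle is the final paragraph: verifying that the $r$-fold cyclic cover of the local infection piece really has the stated block-tridiagonal intersection form, with correct signs, with the conjugation convention $\overline\zeta_d=\zeta_d^{-1}$, and with $\omega$ correctly identified as the character value on the lifted meridian. This is a classical but sign-sensitive Seifert-matrix calculation, and the low-rank cases $r=1$ and $r=2$ collapse the cyclic block structure and must be written out separately to recover the two displayed special matrices in the statement.
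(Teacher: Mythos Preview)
The paper does not prove this proposition; it is imported verbatim as Corollary~4.7 of the cited reference~\cite{Cha:2007-1} and used as a black box computational tool, so there is no in-paper argument to compare against.

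That said, your outline is the standard route and almost certainly matches how the original reference establishes the formula: build a bordism $C$ from $M$ to $N$ that is a product away from the infection locus, extend the $p$-structure over $C$ (using that the collapse $N\to M$ is a $p$-tower map, which is Proposition~4.8 of~\cite{Cha:2007-1} as you note), and compute the defect $\lambda(N_n,\psi)-\lambda(M_n,\phi)$ as $[\lambda_{\Q(\zeta_d)}(C_n)]-[\lambda_\Q(C_n)]$ via additivity of Witt classes under gluing along the closed $3$-manifold~$M_n$. The contribution then localizes to the components of the preimage of the infection region, and each such component is an $r_j$-fold cyclic cover of the knot-exterior piece, whose twisted intersection form is the classical cyclic-cover Seifert-matrix computation yielding the displayed block matrix. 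Your identification of the genuinely delicate step---getting the signs and the $\omega$-twist on the wrap-around block right, and handling the collapsed cases $r=1,2$ separately---is accurate.

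One small caveat: what you invoke as ``Novikov additivity'' is really Wall's non-additivity theorem in the case where the correction term vanishes (gluing along a closed $3$-manifold with no corners), which does give additivity of Witt classes of the nonsingular parts; it is worth naming this explicitly rather than folding it into Novikov additivity, which strictly speaking is a signature statement.
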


Performing infection on the surgery manifold $M_\beta$ of a string
link $\beta$ along an unknotted curve $\alpha$ in
$D^2\times[0,1]-\beta$ in such a way that the meridian of $K$ is
identified with the preferred longitude of $\alpha$, the resulting
manifold is the surgery manifold $M_\beta'$ of a new string link
$\beta'$, which is said to be obtained from $\beta$ by infection.
Obviously, the composition of the preferred meridian map $X\to
M_{\beta'}$ of $\beta'$ and the $p$-tower map $M_{\beta'} \to
M_{\beta}$ is exactly the preferred meridian map of~$\beta$.  So we
have a formula relating $\lambda_\T(\beta')$ to $\lambda_\T(\beta)$
and invariants of $K$, similarly to
Proposition~\ref{proposition:lambda-of-infected-manifold}.

\begin{proof}[Proof of Theorem~\ref{theorem:vanishing-for-local-knots}]
  Suppose $\T=(\{X_i\},\phi)$ is a $\Z_d$-valued $p$-structure of
  height $n$ for~$X$.  Observe that a local knot $\beta$ is obtained
  from the trivial string link by infection along the $i$th meridian
  for some~$i$.  Since the trivial link has vanishing $\lambda_\T$, by
  Proposition~\ref{proposition:lambda-of-infected-manifold} we have
  \[
  \lambda_\T(\beta) = \sum_j \Big(
  [\lambda_{r_j}(A,\zeta_d^{\phi([\tilde\alpha_j])})] -
  [\lambda_{r_j}(A,1)] \Big)
  \]
  where $\tilde\alpha_j$ is a loop in $X_n$ which is a lift
  of~$x_i^{r_j}$ and $A$ is a Seifert matrix of the infection knot.
  Since $\T$ is locally trivial, $\phi([\tilde\alpha_j]) = 0$ for
  all~$j$.  It follows that $\lambda_\T(\beta)=0$.
\end{proof}

For later use, we observe that the defining condition of a locally
trivial $p$-structure is preserved by pullback, as stated below:

\begin{lemma}
  \label{lemma:local-triviality-under-pullback}
  Suppose $f\colon X \to Y$ is a map into a CW-complex $Y$ and
  $\T=(\{X_k\}, \theta)$ is a $p$-structure for $X$ induced by a
  $p$-structure $(\{Y_k\},\phi)$ of height $n$ for $Y$ via pullback
  along~$f$.  Then $\T$ is locally trivial if and only if
  $\phi([\delta])=0$ whenever $\delta$ is a loop in $Y_n$ whose
  projection in $Y$ is freely homotopic to $f(x_i^r)$ for some~$r$.
\end{lemma}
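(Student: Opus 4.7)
The plan is to work through the lift $f_n\colon X_n \to Y_n$ of $f$, which exists and is unique (up to the choice of basepoint lift) because the pullback defines $\pi_1(X_n) = f_*^{-1}(\pi_1(Y_n))$; by construction $\theta([\gamma]) = \phi([f_n\circ\gamma])$ for every loop $\gamma$ in $X_n$. I will also use the direct $Y_n \to Y$ analogue of the free-homotopy lifting argument in Remark~\ref{remark:alternative-description-of-local-triviality}: any loop $\delta$ in $Y_n$ whose projection in $Y$ is freely homotopic to $f(x_i^r)$ is freely homotopic in $Y_n$ to a loop $\delta'$ whose projection equals $f(x_i^r)$ on the nose, and $\phi([\delta]) = \phi([\delta'])$ because $\phi$ factors through $H_1$. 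Hence the condition on the right-hand side is equivalent to requiring $\phi([\delta']) = 0$ for every loop $\delta'$ in $Y_n$ that projects exactly to $f(x_i^r)$; local triviality of $\T$ is analogously equivalent to $\theta([\gamma]) = 0$ for every loop $\gamma$ in $X_n$ that projects exactly to $x_i^r$.

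The ``if'' direction is then essentially formal: given such a $\gamma$, the composition $f_n\circ\gamma$ is a loop in $Y_n$ projecting exactly to $f(x_i^r)$, so the right-hand-side hypothesis gives $\phi([f_n\circ\gamma]) = 0$, and the pullback identity yields $\theta([\gamma]) = 0$.

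For the ``only if'' direction, I start with a loop $\delta'$ in $Y_n$ projecting exactly to $f(x_i^r)$. Such a $\delta'$ is the lift of $f(x_i^r)$ at some basepoint $\tilde y$ of $Y_n$ above $y_0 = f(x_0)$, and its existence forces $f_*(x_i^r)\in \pi_1(Y_n)$, so $x_i^r \in \pi_1(X_n)$. The key step is to pick a basepoint $\tilde x$ of $X_n$ above $x_0$ with $f_n(\tilde x) = \tilde y$, using the surjection of deck groups $\pi_1(X)/\pi_1(X_n) \to \pi_1(Y)/\pi_1(Y_n)$ coming from the pullback $p$-tower structure. Lift $x_i^r$ to a loop $\gamma$ in $X_n$ starting at $\tilde x$; then $f_n\circ\gamma$ and $\delta'$ are both lifts of $f(x_i^r)$ at $\tilde y$, so they coincide, and $\theta([\gamma]) = 0$ by local triviality of $\T$ gives $\phi([\delta]) = \phi([\delta']) = \theta([\gamma]) = 0$.

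The main hurdle in this plan is the basepoint-matching step in the ``only if'' direction; this is exactly where the pullback $p$-structure is indispensable, through the matching of the deck-group fibers above $x_0$ and $y_0$. The other steps are routine covering-space manipulations combined with the Remark-style reduction to loops projecting on the nose.
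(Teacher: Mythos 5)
Your overall route is the same as the paper's: reduce, via lifting free homotopies (the $Y_n\to Y$ analogue of Remark~\ref{remark:alternative-description-of-local-triviality}), to loops that project on the nose, use the lift $f_n$ and the pullback identity $\theta=\phi\circ(f_n)_*$, and boil everything down to the claim that every loop $\delta'$ in $Y_n$ projecting to $f(x_i^r)$ is $f_n(\gamma)$ for some loop $\gamma$ in $X_n$ projecting to $x_i^r$. The ``if'' direction and the reduction are fine.

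The gap is in your justification of that last claim. What the pullback structure actually gives is $\pi_1(X_n)=f_*^{-1}(\pi_1(Y_n))$, and hence that the induced map of coset spaces $\pi_1(X)/\pi_1(X_n)\to\pi_1(Y)/\pi_1(Y_n)$ (equivalently, the map of fibers over the basepoints under $f_n$) is \emph{injective}; it is not a surjection in general, and these are only coset spaces, since the composite $Y_n\to Y$ need not be regular. Surjectivity is precisely the extra input: it amounts to the stage-wise composites $\pi_1(X_k)\to\pi_1(Y_k)\to\Gamma_k$ being onto the deck groups. For an arbitrary $f$ as in the statement it can fail, and with it the step: take $X=S^1$, $Y=S^1\vee S^1$ with $f$ the inclusion of the circle $a$, and let $Y_1$ be the $\Z_p$-cover dual to the other circle $b$; then $X_1=X$ maps onto the lift of $a$ at one vertex of $Y_1$, the lifts of $a$ at the remaining $p-1$ vertices are loops projecting to $f(x_1)$ that are not of the form $f_1(\gamma)$, and a character $\phi$ vanishing on the first lift but not on another yields a locally trivial $\T$ for which the right-hand condition fails. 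So the surjection you invoke does not ``come from the pullback structure''; it must be assumed --- it does hold for the $p$-tower maps (the meridian maps) to which the lemma is applied, since there the characters remain surjective at each stage, the two fibers have equal cardinality, and your injective coset map is then a bijection --- or ``pullback'' must be read as the honest fiber product. (The paper's own proof elides the same point in its parenthetical remark, so your argument is no less complete than the printed one, but the justification as written is the one implication that is false.) A secondary slip: from a closed lift of $f(x_i^r)$ at an arbitrary $\tilde y$ you may only conclude that $f_*(x_i^r)$ lies in a \emph{conjugate} of $\pi_1(Y_n)$, since $Y_n\to Y$ need not be regular; the clean way to see that the lift $\gamma$ of $x_i^r$ at $\tilde x$ closes up is to note that $f_n(\gamma)$ is the closed lift $\delta'$ and that $f_n$ is injective on fibers by the pullback identity, so the endpoint of $\gamma$ maps to $\tilde y$ and must equal $\tilde x$.
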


\begin{proof}
  Let $f_n\colon X_n \to Y_n$ be the lift of~$f$.  Note that a loop
  $\delta$ in $Y_n$ projects to $f(x_i^r)$ if and only if
  $\delta=f_n(\gamma)$ for some a loop $\gamma$ in $X_n$ which
  projects to~$x_i^r$.  (For the only if part we need that $X_n$ is
  the pullback of~$Y_n$.)  Also, if a loop $\delta$ in $Y_n$ is freely
  homotopic to $f_n(\gamma)$ for some loop $\gamma$ in $X_n$, then
  $\phi([\delta])=\theta([\gamma])$ since $\theta$ is induced
  by~$\phi$.  The conclusion follows from this.
\end{proof}

\section{Construction of examples}
\label{section:examples}

In \cite{Cha:2007-1}, it was proved that the $n$th iterated Bing
doubles (which has $2^n$ components) of certain knots are nontrivial
in $\FL_{(n)}/\FL_{(n+1)}$ but have vanishing Harvey's
$\rho_n$-invariant.  In this section, for any $m$ we construct
infinitely many $m$-component string links with similar properties,
and furthermore we show that they are linearly independent in the
abelianization of $\FL_{(n)}/\FL_{(n.5)}$.


Our examples are constructed by infection on a trivial string link.
First we show that the class of $\hF$-(string) links is closed under
infection.

\begin{proposition}
  \label{proposition:infection-on-hat-F-link}
  Any (string) link obtained from an $\hF$-(string) link via
  infection by a knot is an $\hF$-(string) link.
\end{proposition}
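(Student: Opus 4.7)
The plan is to construct a natural collapse map $f\colon E_{L'}\to E_L$ from the infected link exterior down to the original, show $f$ is $2$-connected with respect to $\Z_{(p)}$-coefficients, and then transport the $\hF$-property of $\beta$ across $f$ using Lemma~\ref{lemma:algebraic-closure-and-p-tower-map}(1). Concretely, write $E_L=E_0\cup_T V$ and $E_{L'}=E_0\cup_T E_K$, where $V$ is a tubular neighborhood of the infection curve $\alpha\subset E_L$, $E_0=E_L\setminus\operatorname{int}(V)$, and $T=\partial V=\partial E_K$ is glued in the standard way ($\mu_K\leftrightarrow\lambda_\alpha$, $\lambda_K\leftrightarrow\mu_\alpha$). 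The gluing map $H_1(\partial E_K)\to H_1(V)$ sends $\mu_K$ to a generator and $\lambda_K$ to $0$, so it factors through $H_1(E_K)=\Z\langle\mu_K\rangle$; this yields an extension $E_K\to V$ of the gluing. Let $f$ be the identity on $E_0$ and this collapse on $E_K$.

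Next, I would observe that $E_K\to V$ is an integral $H_*$-isomorphism, since by Alexander duality $H_*(E_K)\cong H_*(S^1)$, $V\simeq S^1$, and the map is the identity on $H_1$ by construction. In particular, $E_K\to V$ is $2$-connected with $\Z_{(p)}$-coefficients. Comparing the Mayer-Vietoris sequences for $E_L$ and $E_{L'}$ associated to the above decompositions, and applying the five-lemma (using that $f$ is the identity on $E_0$ and on $T$, and an $H_*$-iso on $E_K\to V$), one concludes that $f\colon E_{L'}\to E_L$ is itself an $H_*$-isomorphism with $\Z_{(p)}$-coefficients, hence $2$-connected. By Lemma~\ref{lemma:algebraic-closure-and-p-tower-map}(1), $f$ induces an isomorphism $\widehat{\pi_1(E_{L'})}\to \widehat{\pi_1(E_L)}$.

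To finish, choose the preferred meridian map $\mu\colon X\to E_\beta\to E_L$ with image disjoint from $V$ (possible, as meridians of components of $L$ can be isotoped off $\alpha$). Then the composition $X\to E_{\beta'}\to E_{L'}\xrightarrow{f}E_L$ equals $\mu$. Since $\mu$ induces an isomorphism on $\widehat{\pi_1(-)}$ by hypothesis and $f_*$ is an isomorphism, so does $X\to E_{L'}$, and hence so does $X\to E_{\beta'}\to E_{L'}$. For the longitude condition, the preferred longitudes of $L'$ lie in $E_0$, where $f$ is the identity, so they are sent to the preferred longitudes of $L$; these vanish in $\widehat{\pi_1(E_L)}$ by the $\hF$-hypothesis, and the commutative diagram together with the isomorphism $f_*$ forces the preferred longitudes of $L'$ into the kernel of $\pi_1(E_{L'})\to\widehat{\pi_1(E_{L'})}$. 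The string link case is identical, working throughout with $X\to E_{\beta'}$.

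The main (and only substantive) point is the $2$-connectedness of $f$; everything else is a formal transport along the iso $f_*$ on algebraic closures. I expect the only subtlety to be verifying that the collapse $E_K\to V$ is well-defined as a map extending the gluing on $T$ and is indeed an $H_*$-iso, but this reduces to the classical fact $H_*(E_K)\cong H_*(S^1)$ together with an inspection of the induced map on $H_1$.
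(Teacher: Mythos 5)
Your proof is correct and takes essentially the same route as the paper: a homology equivalence $E_{L'}\to E_L$ that is the identity near the boundary (hence preserves the peripheral structure), promoted to an isomorphism of algebraic closures by Lemma~\ref{lemma:algebraic-closure-and-p-tower-map}~(1), through which the meridian and longitude conditions are transported via the lifted meridian map. The only difference is that you build the collapse map $E_K\to V$ and check the homology equivalence by Mayer--Vietoris by hand, whereas the paper cites this map and its properties from Proposition~4.8 of~\cite{Cha:2007-1}.
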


\begin{proof}
  It suffices to prove the conclusion in the case of a link.  Suppose
  $L'$ is obtained from a link $L$ via by infection along $\alpha$
  using a knot.  Let $E$ and $E'$ be the exteriors of $L$ and $L'$,
  respectively.  The $p$-tower map $f\colon E' \to E$ given by
  \cite[Proposition~4.8]{Cha:2007-1} is actually an integral homology
  equivalence preserving the peripheral structure (see the proof of
  \cite[Proposition~4.8]{Cha:2007-1}).  Choose a meridian map
  $\mu\colon X=\bigvee^m S^1 \to E$, i.e., $\mu$ sends the $i$th
  circle of $X$ to a $i$th meridian of~$L$.  We may assume that the
  image of $\mu$ is disjoint from a tubular neighborhood of $\alpha$
  so that $\mu$ gives rise to a meridian map $\mu'\colon X \to E'$
  satisfying $f\circ \mu' = \mu$.  Since $L$ is an $\hF$-link, $\mu$
  induces an isomorphism on $\widehat{\pi_1(-)}$~\cite{Levine:1989-1}.
  Also, so does $f$ by
  Lemma~\ref{lemma:algebraic-closure-and-p-tower-map}~(1), since $f$
  is a homology equivalence.  It follows that $\mu'$ induces an
  isomorphism on $\widehat{\pi_1(-)}$.  Since the longitudes of $L$
  are killed in $\widehat{\pi_1(E)}$ and $f$ preserves the peripheral
  structure, the longitudes of $L'$ is killed in $\widehat{\pi_1(E')}
  \cong \widehat{\pi_1(E)}$.  Therefore $L'$ is an $\hF$-link.
\end{proof}

We will use infection knots and curves obtained by applying the
following two lemmas.  In the first lemma, we denote the
Levine-Tristram signature of $K$ by $\sigma_K(\omega)$, i.e., for
$\omega\in S^1 \subset \C$, $\sigma_K(\omega)$ is defined to be the
average of the two one-sided limits of the \emph{$\omega$-signature of
  $K$} given by
\[
\sign[ (1-\omega)A + (1-\omega^{-1})A^T]
\]
where $A$ is a Seifert matrix of~$K$.  We remark two facts: first, one
should think of the average in order to obtain a concordance invariant
for any $\omega\in S^1$.  Second, when $\omega$ is a \emph{primitive}
$p^a$th root of unity for some prime power $p^a$, it is known that
there is no nontrivial jump of the $\omega$-signature at $\omega$, so
that the average $\sigma_K(\omega)$ is equal to the
$\omega$-signature.  In this case we do not distinguish
$\sigma_K(\omega)$ and the $\omega$-signature.

\begin{lemma}
  \label{lemma:knots-with-independent-signatures}
  There is an infinite sequence $\{K_i\}$ of knots together with a
  strictly increasing sequence $\{d_i\}$ of powers of $p$ satisfying
  the following properties:
  \begin{enumerate}
  \item $\sigma_{K_i}(\zeta_{d_i}) > 0$, and if $p=2$ then
    $\sigma_{K_i}(\zeta_{d_i}^s) \ge 0$ for any~$s$.
  \item if $i<j$, $\sigma_{K_j}(\zeta_{d_i}^s) = 0$ for any $s$.
  \item $\int_{S^1} \sigma_{K_i}(\omega) \, d\omega = 0$.
  \item $K_i$ has vanishing Arf invariant.
  \end{enumerate}
\end{lemma}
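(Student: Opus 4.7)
The plan is to construct the pairs $(K_i, d_i)$ inductively, realizing each $K_i$ as a connect sum of knots with carefully controlled Tristram--Levine signature functions. The central tool is the realizability principle that any symmetric, integer-valued step function on $S^1$ satisfying the obvious normalizations occurs as $\sigma_K$ for some knot $K$, for instance built as a connect sum of torus knots $T(2,q)$ (and their mirrors) whose Alexander polynomials have roots at prescribed locations on $S^1$. In particular, given any finite set of distinguished prime-power roots of unity together with prescribed signature values there, there exists a knot $K$ realizing precisely those values, and one has continuous freedom to place additional Alexander roots in the complement of that finite set.

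Inductively, having chosen $d_1 < \cdots < d_{i-1}$ and constructed $K_1,\ldots,K_{i-1}$, I would first choose $d_i$ to be a power of $p$ strictly greater than $d_{i-1}$, then build $K_i = J_i \# C_i \# A_i$ in three stages. First, $J_i$ realizes $\sigma_{J_i}(\zeta_{d_i}) > 0$ and $\sigma_{J_i}(\zeta_{d_j}^s) = 0$ for all $j<i$ and all $s$; when $p=2$, I would additionally arrange $\sigma_{J_i}(\zeta_{d_i}^s) \ge 0$ for every $s$ by choosing the building block so that its signature is nonnegative across the portion of $S^1$ containing the $d_i$-th roots of unity (for example a suitably chosen torus knot whose jumps cluster accordingly). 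Next, $C_i$ is an integral-correction knot whose signature vanishes at all the prescribed $\zeta_{d_j}^s$ ($j \le i$) but whose integral compensates that of $J_i$, produced by placing additional Alexander polynomial roots in the complement of the prescribed finite set and tuning their positions to adjust the integral. Finally, $A_i$ corrects the Arf invariant when needed by connect-summing with an algebraically slice knot of $\Arf$ equal to one whose signature function vanishes at all prescribed prime-power roots. Conditions (1), (2), (3), (4) then follow from additivity of $\sigma$, $\int \sigma$, and $\Arf$ under connect sum.

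The main obstacle is the integral-correction step: achieving $\int_{S^1} \sigma_{K_i}(\omega)\, d\omega = 0$ \emph{exactly}, while keeping the signature integer-valued and subject to the finitely many prescribed values at $\zeta_{d_j}^s$. The resolution is that Alexander polynomial roots on $S^1$ may be chosen continuously (in complex conjugate pairs), so the arc lengths between signature jumps vary continuously, whereas the jump values themselves are quantized integers; consequently the total integral depends continuously on the root positions and can be made to take the specific real value needed to cancel $\int \sigma_{J_i}$, while the new roots can be kept disjoint from the finite set $\{\zeta_{d_j}^s : j \le i,\, s\}$. A parallel subtlety for $p=2$ is maintaining $\sigma_{K_i}(\zeta_{d_i}^s) \ge 0$ for all $s$ throughout the correction process; this is why the correction roots must be chosen sufficiently far from every $d_i$-th root of unity (which is possible since $d_i$ is fixed and the correction loci may be moved freely).
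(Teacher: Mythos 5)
Your overall architecture (prescribe values at the finitely many relevant prime-power roots of unity, then correct the integral and the Arf invariant by connect sum) is reasonable, but the integral-correction step contains a genuine gap. You assert a realizability principle in which Alexander roots on $S^1$ ``may be chosen continuously,'' so that the integral of the signature function varies continuously and can be tuned to hit the exact value $-\int_{S^1}\sigma_{J_i}$. This is not available: jumps of the Tristram--Levine signature occur only at unit-circle roots of integral Alexander polynomials, a countable set, and the known realization results (e.g.\ the Cha--Livingston construction the paper quotes) only give jump locations \emph{arbitrarily close to} a prescribed angle, not equal to it. So you can approximate a target integral but have no argument that you can cancel $\int\sigma_{J_i}$ \emph{exactly}, which is what condition (3) demands; the intermediate-value reasoning simply does not apply to a countable parameter set. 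The paper sidesteps this entirely with the cabling trick: if $J'$ is the $(2,1)$-cable of $J$, the reparametrization formula gives $\sigma_{J'}(\omega)=\sigma_J(\omega^2)$, hence $\int_{S^1}\sigma_{J'}=\int_{S^1}\sigma_J$ on the nose, while the support of $\sigma_{J'}$ is moved to neighborhoods of $\pm\sqrt{\omega_1}$ and their conjugates. Taking $K=J\#(-J')$ then kills the integral exactly, keeps $\sigma_K(\zeta_{d_i})=\sigma_J(\zeta_{d_i})>0$, and (for $p=2$) places the negative part strictly between consecutive $d_i$-th roots of unity, so nonnegativity at all $\zeta_{d_i}^s$ holds; the widths are chosen so that all of this support misses the $d_j$-th roots of unity for $j<i$, giving (2).

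Two smaller points. First, your Arf-correction knot ``an algebraically slice knot of $\Arf$ equal to one'' does not exist: algebraic sliceness forces $\Delta_K(-1)$ to be an odd square, hence $\Arf=0$. The fix is easy --- use the figure-eight knot (trivial signature function, $\Arf=1$), or do as the paper does and replace $K_i$ by $K_i\#K_i$, which doubles all signatures (preserving (1)--(3)) and kills the Arf invariant. Second, your blanket claim that any symmetric integer-valued step function with the obvious normalizations is realized as some $\sigma_K$ is false for the same countability reason as above, and it is exactly this overstatement that makes the tuning step look easier than it is.
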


We remark that the conclusion of
Lemma~\ref{lemma:knots-with-independent-signatures} is slightly
stronger than what we actually need in this section; the extra parts
will be used in a later section.

\begin{lemma}
  \label{lemma:infection-curve-and-tower}
  For any $m>1$ and $n$, there exist a loop $\alpha$ in $X=\bigvee^m
  S^1$ and a $p$-tower $\{X_k\}$ of height $n$ for $X$ satisfying the
  following:
  \begin{enumerate}
  \item $[\alpha] \in \pi_1(X)^{(n)}$ and every lift $\tilde \alpha_j$
    of $\alpha$ in $X_n$ is a loop.
  \item There is a map $f\colon \pi_1(X_n) \to \Z$ which sends (the
    class of) each $\tilde\alpha_j$ to $-1$, $0$, or $1$ and sends at
    least one $\tilde\alpha_j$ to~$1$.  In addition, for any $\Z \to
    \Z_d$, the composition $\theta\colon \pi_1(X_n) \xrightarrow{f} \Z
    \to \Z_d$ gives a locally trivial $p$-structure
    $(\{X_i\},\theta)$.
  \end{enumerate}
\end{lemma}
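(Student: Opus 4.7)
The plan is to build the $p$-tower, the loop $\alpha$, and the character $f$ simultaneously using the mod-$p$ derived series of $F := \pi_1(X) = F_m$. Set $K_0 = F$ and $K_{k+1} = \ker(K_k \to H_1(K_k;\Z_p))$, and let $X_k$ be the regular cover of $X$ with $\pi_1(X_k) = K_k$. Each $K_k/K_{k+1}$ is an elementary abelian $p$-group, so $\{X_k\}$ is a $p$-tower of height $n$. An easy induction shows $F^{(k)} \subset K_k$, so any $\alpha \in F^{(n)}$ lies in $K_n$, and since $K_n$ is normal in $F$, every lift of $\alpha$ to $X_n$ is a loop. I will take $\alpha$ to be a specific nontrivial iterated commutator, for example $\alpha_n$ built recursively by $\alpha_1 = [x_1,x_2]$ and $\alpha_{k+1} = [\alpha_k,\, x_1 \alpha_k x_1^{-1}]$, whose nontriviality in $F$ is verified via the Magnus expansion.

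To define $f$, let $N = \langle\langle x_1^{p^n}, \ldots, x_m^{p^n}\rangle\rangle_F$. An induction on $k$ shows $x_i^{p^k} \in K_k$ (since $[x_i^{p^k}] = p\cdot[x_i^{p^{k-1}}] \equiv 0$ in $K_{k-1}^{\mathrm{ab}}/p$), so $N \subset K_n$; conversely, since $K_n \triangleleft F$, the condition $g x_i^r g^{-1} \in K_n$ is equivalent to $x_i^r \in K_n$, which by the same induction forces $p^n \mid r$ and hence $g x_i^r g^{-1} \in N$. Thus $N$ contains every ``bad'' element of $\pi_1(X_n)$, so in view of Remark~\ref{remark:alternative-description-of-local-triviality} the local triviality of $\theta = f \bmod d$ (for every $d$) reduces to requiring $f$ to vanish on $N$, i.e.\ to $f$ factoring through $K_n/N$. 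Now $F/N = \mathop{\ast}_{i=1}^m \Z_{p^n}$ is a nonabelian free product, and $K_n/N$ is the kernel of $F/N \to F/K_n$. Since $K_n/N$ contains no conjugate of a nontrivial element of any $\Z_{p^n}$-factor (such elements correspond to $x_i^r$ with $p^n \nmid r$, which lie outside $K_n$), the Kurosh subgroup theorem implies $K_n/N$ is free. Hence $(K_n/N)^{\mathrm{ab}}$ is free abelian, and the image $[\alpha]$ is nonzero since $\alpha$ is a nontrivial iterated commutator in the nonabelian free product $F/N$.

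The main obstacle---the heart of the argument---is to choose $f$ so that $f(\tilde\alpha_v) \in \{-1,0,1\}$ for every lift $\tilde\alpha_v = g_v \alpha g_v^{-1}$, i.e.\ so that the entire $F/K_n$-orbit of $[\alpha]$ in $(K_n/N)^{\mathrm{ab}}$ maps into $\{-1,0,+1\}$ under $f$. In the transparent case $n = 1$, $m = p = 2$, a direct computation in $\Z_2 \ast \Z_2 = D_\infty$ shows $K_1/N \cong \Z$ with $F/K_1 = \Z_2^2$ acting on $[\alpha_1] = [x_1,x_2] = (ab)^2$ by $\pm 1$ (conjugation by either generator inverts it), so taking $f$ to be the identity yields $f(\tilde\alpha_v) \in \{-1,+1\}$. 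For general $n$, I plan to exploit the recursion $\alpha_{k+1} = [\alpha_k,\, x_1 \alpha_k x_1^{-1}]$ together with the identity $g[a,b]g^{-1} = [gag^{-1}, gbg^{-1}]$, so that the action of conjugation on $[\alpha_{k+1}]$ can be computed from its action on $[\alpha_k]$; an induction then propagates a bounded orbit structure to higher levels, after passing to an appropriate sub-quotient of $(K_n/N)^{\mathrm{ab}}$ on which the orbit of $[\alpha_n]$ is controlled. A suitable $\Z$-valued projection $f$ of this sub-quotient sending $[\alpha_n]$ to $+1$ then produces the desired character, and the composition $\theta = f \bmod d$ is locally trivial by construction.
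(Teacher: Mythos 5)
Your reductions at the start are fine (the mod-$p$ derived series tower makes every lift of $\alpha\in F^{(n)}$ a loop, and local triviality does reduce, via Remark~\ref{remark:alternative-description-of-local-triviality}, to making $f$ vanish on the normal closure $N$ of the $x_i^{p^n}$, granted the converse claim $x_i^r\in K_n\Rightarrow p^n\mid r$, which you only gesture at). But the heart of the lemma --- producing a single homomorphism $f\colon\pi_1(X_n)\to\Z$ taking values in $\{-1,0,1\}$ on \emph{every} lift $\tilde\alpha_j$ and the value $1$ on at least one --- is exactly the part you do not prove: your final paragraph is a plan (``propagate a bounded orbit structure \dots after passing to an appropriate sub-quotient''), not an argument. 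This is a genuine gap, and it is not a routine one to fill with your choice of tower. The $F/K_n$-orbit of $[\alpha]$ in $H_1(X_n)$ under the full iterated $\Z_p$-homology covers is enormous and uncontrolled, and nothing in the commutator identity $g[a,b]g^{-1}=[gag^{-1},gbg^{-1}]$ bounds the values of a \emph{fixed} $\Z$-valued character on that orbit; your $n=1$, $m=p=2$ computation in $D_\infty$ does not bootstrap, because at the next level one must analyze the homology cover of a free group of large rank. The paper sidesteps precisely this difficulty by \emph{not} taking the maximal covers: at each stage it uses a character $\phi_k\colon\pi_1(X_k)\to\Z_q\oplus\Z_q$ supported on just two chosen $1$-cells $c_k,d_k$, so that after collapsing the pieces $Y_k(g)$ almost every lift of $\alpha_{k+1}$ becomes null-homotopic and the surviving ones are the explicit short words of Lemma~\ref{lemma:lift-behaviour}; the character $f$ is then read off from two $1$-cells of $\bar X_n$. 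Some such engineered control is what your sketch would have to replace, and it is not clear it exists at all for the mod-$p$ derived-series tower.

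A secondary but real error: you assert that $[\alpha]\neq 0$ in $(K_n/N)^{\mathrm{ab}}$ ``since $\alpha$ is a nontrivial iterated commutator in the nonabelian free product $F/N$.'' Nontriviality of an element of a free group says nothing about its class in the abelianization of a subgroup containing it (it could lie in the commutator subgroup of $K_n/N$), and without $[\alpha]\neq 0$ modulo the local-triviality subgroup no admissible $f$ can send any lift to $1$. In the paper this nonvanishing is never argued abstractly; it falls out of the explicit computation that the basepoint lift of $\alpha$ traverses $\bar c_{n-1}(0,0)$ exactly once and misses $\bar c_{n-1}(1,0)$, hence has $f$-value $1$. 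So both the existence of $f$ and the nonvanishing it presupposes remain unproved in your proposal.
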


We postpone the proof of
Lemmas~\ref{lemma:knots-with-independent-signatures}
and~\ref{lemma:infection-curve-and-tower}, and proceed to discuss how
our examples are constructed.

Suppose $\{K_i\}$ satisfies
Lemma~\ref{lemma:knots-with-independent-signatures} and $\alpha$
satisfies Lemma~\ref{lemma:infection-curve-and-tower}.  Let $\mu\colon
X \to E_{\beta_0}$ be the preferred meridian map of a trivial string
link $\beta_0$.  Choose any simple closed curve in $E_{\beta_0}$ which
is unknotted in $D^2\times[0,1]$ and realizes the homotopy class of
$\mu_*([\alpha]) \in \pi_1(E_{\beta_0})$.  Let $\beta(K_i)$ be the
string link obtained from $\beta_0$ via infection using $K_i$ along
the chosen simple closed curve.  Note that $\beta(K_i)$ is always an
$\hF$-string link by
Proposition~\ref{proposition:infection-on-hat-F-link}.

\begin{theorem}
  \label{theorem:Z-independence-of-string-links}
  The string links $\beta(K_i)$ are $(n)$-solvable and linearly
  independent in the abelianization of
  $\hCSL\big/(\hFSL_{(n.5)}\cdot\langle\text{local knots}\rangle)$.
  Consequently, the abelianization of
  $\hFSL_{(n)}\big/(\hFSL_{(n.5)}\cdot\langle\text{local
    knots}\rangle)$ is of infinite rank for any~$n$.
\end{theorem}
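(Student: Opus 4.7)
The plan is to detect the classes $[\beta(K_j)]$ by a family of homomorphisms $\lambda_{\T_i}$, using one locally trivial $\Z_{d_i}$-valued $p$-structure $\T_i$ per index $i$, and to exhibit a lower-triangular pattern with nonzero diagonal in the matrix of their Witt signatures. First, each $\beta(K_i)$ must lie in $\hFSL_{(n)}$. Because the preferred meridian map identifies $\pi_1(X)$ with $\pi_1(E_{\beta_0})$ (a free group), the infection curve $\mu_{\ast}([\alpha])$ lies in $\pi_1(E_{\beta_0})^{(n)}$ by Lemma~\ref{lemma:infection-curve-and-tower}(1); combining this with the vanishing Arf invariant of $K_i$ from Lemma~\ref{lemma:knots-with-independent-signatures}(4), the standard Cochran--Orr--Teichner construction (as used, e.g., for the iterated Bing double examples in \cite{Cochran-Orr-Teichner:1999-1,Cha:2007-1}) produces an integer $(n)$-solution for the surgery manifold of the closure of $\beta(K_i)$. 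Moreover, $\beta(K_i)\in\hCSL$ by Proposition~\ref{proposition:infection-on-hat-F-link}.

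For each $i$ set $\T_i=(\{X_k\},\theta_i)$, where $\{X_k\}$ is the tower of Lemma~\ref{lemma:infection-curve-and-tower} and $\theta_i$ is the composition of $f$ with the reduction $\Z\to\Z_{d_i}$. Lemma~\ref{lemma:infection-curve-and-tower}(2) makes $\T_i$ locally trivial, so Theorem~\ref{theorem:COT-filtration-and-lambda_T} delivers a homomorphism $\lambda_{\T_i}\colon\hCSL/(\hFSL_{(n.5)}\cdot\langle\text{local knots}\rangle)\to L^0(\Q(\zeta_{d_i}))$ that factors through the abelianization. Since each $\beta(K_j)$ is obtained from the trivial $\beta_0$ by infection using $K_j$ along a curve realizing $\mu_{\ast}[\alpha]$, applying the string link version of Proposition~\ref{proposition:lambda-of-infected-manifold} and using $\lambda_{\T_i}(\beta_0)=0$ give
\[
\lambda_{\T_i}(\beta(K_j))=\sum_k\Bigl([\lambda_{r_k}(A_{K_j},\zeta_{d_i}^{\theta_i([\tilde\alpha_k])})]-[\lambda_{r_k}(A_{K_j},1)]\Bigr).
\]
Lemma~\ref{lemma:infection-curve-and-tower}(1) forces $r_k=1$ for every lift, so $\lambda_1(A,\omega)$ is the standard Seifert-type form whose signature is $\sigma_{K_j}(\omega)$. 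Composing with the signature character $\sigma\colon L^0(\Q(\zeta_{d_i}))\to\Z$, and using $\theta_i([\tilde\alpha_k])\in\{0,\pm1\}$ together with $\sigma_{K_j}(\bar\omega)=\sigma_{K_j}(\omega)$, the sum collapses to
\[
\sigma(\lambda_{\T_i}(\beta(K_j)))=N_i\cdot\sigma_{K_j}(\zeta_{d_i}),
\]
where $N_i\ge 1$ counts the lifts with $\theta_i([\tilde\alpha_k])=\pm1$.

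Lemma~\ref{lemma:knots-with-independent-signatures}(1)--(2) now produces the required lower-triangular pattern: $\sigma(\lambda_{\T_i}(\beta(K_j)))=0$ for $j>i$, and strictly positive for $j=i$. Hence any finite relation $\sum_j n_j[\beta(K_j)]=0$ in the abelianization of $\hCSL/(\hFSL_{(n.5)}\cdot\langle\text{local knots}\rangle)$, tested by $\sigma\circ\lambda_{\T_1},\sigma\circ\lambda_{\T_2},\ldots$ in order, forces $n_1=n_2=\cdots=0$; this independence transfers to the abelianization of the subgroup $\hFSL_{(n)}/(\hFSL_{(n.5)}\cdot\langle\text{local knots}\rangle)$, because a hypothetical dependence there would map, under the inclusion-induced map of abelianizations, to the same dependence in the ambient abelianization. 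The main obstacle is the first step---explicit construction of an integer $(n)$-solution for $\beta(K_i)$ starting from the derived-series depth of $\alpha$ and the Arf-invariant cap for $K_i$; everything after is essentially bookkeeping with the infection formula and the numerical properties of $\{K_i\}$ granted by Lemma~\ref{lemma:knots-with-independent-signatures}.
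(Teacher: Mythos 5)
Your proposal is correct and follows essentially the same route as the paper: $(n)$-solvability from $[\alpha]\in\pi_1(X)^{(n)}$ plus vanishing Arf via Cochran--Orr--Teichner, then detection by $\sign\circ\lambda_{\T}$ with $\T=(\{X_k\},\theta_{d_i})$, the infection formula of Proposition~\ref{proposition:lambda-of-infected-manifold} collapsing to $c\cdot\sigma_{K_j}(\zeta_{d_i})$, and the triangular vanishing pattern of Lemma~\ref{lemma:knots-with-independent-signatures}. Your sequential testing of $\sigma\circ\lambda_{\T_1},\sigma\circ\lambda_{\T_2},\ldots$ is just the paper's ``choose the minimal $i_0$ with $a_{i_0}\neq 0$'' argument in different words.
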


\begin{proof}
  $\beta(K_i)$ is an $\hF$-string link by
  Proposition~\ref{proposition:infection-on-hat-F-link}.  Since
  $[\alpha]\in \pi_1(X)^{(n)}$ by
  Lemma~\ref{lemma:infection-curve-and-tower}~(1) and $K_i$ has
  vanishing Arf invariant by
  Lemma~\ref{lemma:knots-with-independent-signatures}~(4),
  $\beta(K_i)$ is $(n)$-solvable by
  \cite{Cochran-Orr-Teichner:1999-1}.  So (the concordance class of)
  $\beta(K_i)$ is in~$\hFSL_{(n)}$.

  In order to show the independence, we use the invariant
  $\lambda_\T$.  Let $\{X_k\}$ be the $p$-tower in
  Lemma~\ref{lemma:infection-curve-and-tower}, and let
  $\theta_{d}\colon\pi_1(X_n) \to \Z_d$ be the composition of the map
  $f\colon \pi_1(X_n) \to \Z$ in
  Lemma~\ref{lemma:infection-curve-and-tower} and the projection
  $\Z\to \Z_d$ sending $1\in\Z$ to $1\in\Z_d$.  Then, for
  $\T=(\{X_k\},\theta_{d})$ and a knot $K$, appealing to
  Proposition~\ref{proposition:lambda-of-infected-manifold} we have
  \[
  \lambda_\T(\beta(K)) = \sum_j \big(
  [\lambda_{1}(A,\zeta_d^{\theta_{d}([\tilde\alpha_j])})] -
  [\lambda_{1}(A,1)] \big)
  \]
  where $A$ is a Seifert matrix of~$K$.  Observe that $\sign
  \lambda_1(A,\omega)=\sigma_{K}(\omega)$, $\sigma_{K}(1)=0$, and
  $\sigma_{K}(\omega) = \sigma_{K}(\omega^{-1})$.  So by
  Lemma~\ref{lemma:infection-curve-and-tower}~(2),
  \[
  \sign \lambda_\T(\beta(K)) = c\cdot \sigma_{K}(\zeta_d).
  \]
  where $c$ is the number of lifts $\tilde\alpha_j$ sent to $\pm 1$ by
  $f\colon \pi_1(X_n) \to \Z$.  Note that $c>0$ and $c$ is independent
  of~$d$.

  Suppose $\sum_i a_i \beta(K_i)=0$ in the abelianization of
  $\hCSL/(\hFSL_{(n.5)}\cdot\langle\text{local knots}\rangle)$ where
  not all $a_i$ are zero.  Choose a minimal $i_0$ such that
  $a_{i_0}\ne 0$.  Let $d_i$ be as in
  Lemma~\ref{lemma:knots-with-independent-signatures}, and let
  $\T=(\{X_k\},\phi_{d_{i_0}})$.  Note that $\T$ is locally trivial by
  Lemma~\ref{lemma:infection-curve-and-tower}~(2).  Then by
  Theorem~\ref{theorem:COT-filtration-and-lambda_T},
  Lemma~\ref{lemma:knots-with-independent-signatures}, and by our
  choice of $i_0$, we have
  \begin{align*}
    0 = \sign \lambda_\T\Big(\sum_i a_i \beta(K_i)\Big) & = \sum_i
    a_i\sign\lambda_\T( \beta(K_i)) \\
    &= \sum_{i\ge i_0} c a_i\cdot \sigma_{K_i}(\zeta_{d_{i_0}}) = c
    a_{i_0}\cdot \sigma_{K_{i_0}}( \zeta_{d_{i_0}}).
  \end{align*}
  Since $c$ and $\sigma_{K_{i_0}}(\zeta_{d_{i_0}})$ is nonzero,
  $a_{i_0}$ should be zero.  From this contradiction, it follows that
  the $\beta(K_i)$ are linearly independent in the abelianization of
  $\hCSL/(\hFSL_{(n.5)}\cdot\langle\text{local knots}\rangle)$.
\end{proof}

Since $\int_{S^1} \sigma_{K_i}(\omega) \, d\omega=0$ by
Lemma~\ref{lemma:knots-with-independent-signatures}~(3), it follows
that each $\beta(K_i)$ has vanishing Harvey's $\rho_n$-invariant by
results in~\cite{Harvey:2006-1}.  Also, note that each $\beta(K_i)$ is
a boundary link.  (In fact it can be seen that any link obtained from
a boundary link by infection is again a boundary link.)  So, as an
immediate consequence of
Theorem~\ref{theorem:Z-independence-of-string-links}, we obtain
Theorem~\ref{theorem:kernel-of-harvey-invariant}: the abelianization of
the kernel of Harvey's homomorphism
\[
\rho_n\colon \frac{\BFSL_{(n)}}{\BFSL_{(n.5)}\cdot\langle\text{local
    knots}\rangle} \to \R
\]
has infinite rank.

\subsection{Construction of infection knots}

In this section we will prove
Lemma~\ref{lemma:knots-with-independent-signatures}.  For this
purpose, we need the following known facts.  The first is a formula
for the signature of cable knots.

\begin{lemma}
  [Repametrization formula
  \cite{Litherland:1979-1,Cochran-Orr:1993-1,Cha-Ko:2000-1,Cha:2003-1}]
  \label{lemma:reparametrization-formula}
  For a knot $K$, let $K'$ be the $(r,1)$-cable of~$K$.  Then
  $\sigma_{K'}(\omega) = \sigma_{K}(\omega^r)$ for any $\omega\in
  S^1$.
\end{lemma}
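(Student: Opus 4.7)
The plan is to derive the reparametrization formula from Litherland's satellite formula. The $(r,1)$-cable $K'$ of $K$ is, by definition, a satellite of $K$: the pattern $P$ is the $(r,1)$-curve on the boundary of a standardly framed solid torus $V$, regarded as a knot in $V$, and the companion is $K$, with $V$ glued onto the $0$-framed tubular neighborhood of $K$ so that the meridian of $V$ matches a meridian of $K$. The winding number of $P$ in $V$ is $r$.

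The key observation is that $P$, viewed as a knot in $S^3$ via the \emph{standard} embedding of $V$ in $S^3$ (i.e., the one coming from taking $K$ to be the unknot), is precisely the $(r,1)$-torus knot, and the $(r,1)$-torus knot is the unknot. Hence $\sigma_P(\omega) \equiv 0$ for every $\omega \in S^1$.

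I would then invoke Litherland's satellite formula for Levine-Tristram signatures, which asserts that for any satellite $K'$ of $K$ with pattern $P$ of winding number $w$,
\[
\sigma_{K'}(\omega) \;=\; \sigma_{P}(\omega) \;+\; \sigma_{K}(\omega^{w}).
\]
Applied to our situation with $w=r$ and $\sigma_P\equiv 0$, this immediately yields $\sigma_{K'}(\omega)=\sigma_K(\omega^r)$, as claimed. The averaging convention used in the definition of $\sigma_K(\omega)$ is compatible with this formula at every $\omega\in S^1$, so no separate discussion of jumps is required.

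The main step that is not purely bookkeeping is Litherland's formula itself, which is classical but worth recalling in outline. One builds a Seifert surface $F'$ for $K'$ by gluing $r$ parallel $0$-framed copies of a Seifert surface for $K$ along the meridional disks of a Seifert surface for $P$ in $V$. The resulting Seifert matrix $A'$ has a block form in which the block coming from the $r$ copies of $A_K$ can be simultaneously diagonalized by the $r$-th roots of $\omega$, turning $(1-\omega)A' + (1-\bar\omega)(A')^T$ into the orthogonal sum of $(1-\zeta)A_K+(1-\bar\zeta)A_K^T$ over $\zeta$ with $\zeta^r=\omega$, together with a contribution from the pattern. Summing signatures and using that the pattern contribution equals $\sigma_P(\omega)$ gives Litherland's identity. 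The would-be obstacle is thus really contained in this classical computation; in the present setting the formula is well documented in the cited references, and no new ingredient is needed beyond recognizing the $(r,1)$-pattern as an unknot.
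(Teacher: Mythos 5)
Your proposal is correct and is essentially the argument behind the paper's treatment: the paper gives no proof, citing Litherland and the other references, and your derivation via Litherland's satellite formula with the observation that the $(r,1)$-pattern is unknotted (so $\sigma_P\equiv 0$) is exactly the standard argument those citations encode. Your remark about the averaging convention is also sound, since $\omega\mapsto\omega^r$ is a local orientation-preserving homeomorphism of $S^1$, so one-sided limits (and hence their averages) are carried to one-sided limits at $\omega^r$.
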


The second is a realization result of a ``bump'' signature function.

\begin{lemma}
  \label{lemma:knot-signature-bump}
  For any $\theta_0\in (0,\pi)$, there is a knot $K$ and an
  arbitrarily small neighborhood $I$ of $\theta_0$ contained in
  $(0,\pi)$ such that $\sigma_K(e^{\theta_0\sqrt{-1}})\ne 0$ and
  $\sigma_K(e^{t\sqrt{-1}})=0$ for $t\in [0,\pi]-I$.
\end{lemma}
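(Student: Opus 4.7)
The plan is to realize $K$ as a connected sum $J_{\theta_1}\#(-J_{\theta_2})$ of two knots whose $\omega$-signatures are large ``plateau'' functions that partially cancel, leaving a short bump containing $\theta_0$. This approach is feasible because the $\omega$-signature is additive under connected sum and reverses sign under orientation reversal.

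First I would construct, for a dense subset $D \subset (0,\pi)$, a one-parameter family of knots $\{J_\theta\}_{\theta\in D}$ such that $\sigma_{J_\theta}(e^{it})$ equals $+2$ on the arc $(\theta,2\pi-\theta)$ and $0$ elsewhere on $S^1$. The natural candidates are genus-one knots with integer Seifert matrix $A$ chosen so that the Alexander polynomial $\Delta_{J_\theta}(t)=\det(tA-A^T)$ has its pair of complex conjugate roots at $e^{\pm i\theta}$: the $\omega$-signature is then the signature of the $2\times 2$ hermitian matrix $(1-\omega)A+(1-\bar\omega)A^T$, which is locally constant on $S^1$ off the root set of $\Delta_{J_\theta}$, vanishes at $\omega=1$, and equals $\pm 2$ at $\omega=-1$. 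Choosing the orientation so that the classical signature is $+2$ produces the prescribed plateau shape.

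Given $\theta_0\in(0,\pi)$ and $\epsilon>0$, I would pick $\theta_1,\theta_2\in D$ with $\theta_0-\epsilon<\theta_1<\theta_0<\theta_2<\theta_0+\epsilon$ and set $K:=J_{\theta_1}\#(-J_{\theta_2})$. By additivity, $\sigma_K(e^{it})=\sigma_{J_{\theta_1}}(e^{it})-\sigma_{J_{\theta_2}}(e^{it})$. On $[0,\theta_1]$ both summands vanish, on $[\theta_2,\pi]$ both equal $+2$ and cancel, and on $(\theta_1,\theta_2)$ the first equals $+2$ while the second is $0$, so $\sigma_K\equiv +2$ there. In particular $\sigma_K(e^{i\theta_0})\neq 0$, and $I:=(\theta_1,\theta_2)\subset(0,\pi)$ is an arbitrarily small neighborhood of $\theta_0$, finishing the proof.

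The main obstacle I anticipate is establishing density of $D$ in $(0,\pi)$. Direct computation shows that integer genus-one Seifert matrices yield $\Delta(t)=Nt^2+(1-2N)t+N$ with roots at $\cos\theta=1-1/(2N)$ ($N\in\Z_{>0}$), which is dense only near $\theta=0$ and confined to $(0,\pi/2)$. To spread the accessible angles across the entire range, I would invoke the reparametrization formula (Lemma~\ref{lemma:reparametrization-formula}): the $(r,1)$-cable of a knot with a single bump at $\theta^*$ carries bumps at $(\theta^*+2\pi k)/r$ for $k=0,\ldots,r-1$, and varying $r$ and $\theta^*$ makes these angles dense in $(0,\pi)$. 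The $r-1$ extra bumps introduced by cabling can be removed inductively by connected-summing with inverses of other suitably cabled knots whose signature functions already agree on those arcs, thereby isolating a single bump at the desired location. As an alternative, one could shortcut this density step by citing the classical realization theorems for Tristram-Levine signature functions of knots.
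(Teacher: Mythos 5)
Your overall skeleton coincides with the paper's: take two knots whose signature functions are one-jump step functions (equivalently, plateaus on $(\theta,2\pi-\theta)$) with jumps $\theta_1<\theta_0<\theta_2$, and cancel them outside the bump using additivity of $\sigma$ under connected sum and $\sigma_{-J}=-\sigma_J$. The paper obtains these building blocks by quoting the proof of Theorem~1 of Cha--Livingston, which produces, for any target angle, a knot with $\sigma=0$ on $[0,\theta)$ and a nonzero constant on $(\theta,\pi]$ with $\theta$ arbitrarily close to the target; with that input your second paragraph is exactly the paper's two-line argument.

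The gap is in your primary route to the building blocks. As you yourself compute, genus-one (twist) knots only realize jump angles $\arccos\bigl(1-\tfrac{1}{2N}\bigr)$, a discrete set in $(0,\pi/3]$ accumulating only at $0$ (so not ``dense near $0$''), and the repair you propose --- cable by $(r,1)$ and then ``remove the $r-1$ extra bumps inductively by connected-summing with inverses of other suitably cabled knots whose signature functions already agree on those arcs'' --- is asserted, not proved, and is precisely the hard point. Cancellation of an extra bump requires the correcting knot's signature to agree with it \emph{exactly}, so the endpoints $(2\pi k\pm\arccos(1-\tfrac{1}{2N}))/r$ must coincide exactly with jump angles of the correcting cable, which have the same form; this forces exact rational relations $r\,\theta_{N'}=r'\,\theta_N$ among the angles $\arccos(1-\tfrac{1}{2N})$, which you do not establish and which there is no reason to expect. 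Moreover, any correcting cable brings along its own family of extra bumps, and no argument is given that the process closes up after finitely many connected sums (only finite sums are available). Density of the set of \emph{possible} jump locations, which you do verify, is not the issue; isolating a single jump is, and that is exactly the content of the realization statement the paper cites. Your closing alternative --- invoking a known realization theorem for Levine--Tristram signature functions --- is the correct fix and reduces your argument to the paper's. (Two small points: it is mirroring, not orientation reversal, that negates $\sigma$, though the concordance inverse $-J$ does satisfy $\sigma_{-J}=-\sigma_J$; and since $\sigma_K$ is defined by averaging one-sided limits, it is nonzero \emph{at} $\theta_1,\theta_2$, so $I$ should be taken to be a slightly larger open neighborhood of $[\theta_1,\theta_2]$, which is still arbitrarily small.)
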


\begin{proof}
  In~\cite[Proof of Theorem 1]{Cha-Livingston:2002-1}, the following
  statement was shown: for any given $\theta_0\in (0,\pi)$, there are
  $\theta \in (0,\theta_0)$ arbitrarily close to $\theta_0$ and a knot
  $K$ such that $\sigma_K(e^{t\sqrt{-1}})=0$ for $0\le t < \theta$ and
  $\sigma_K(e^{t\sqrt{-1}})$ is a nonzero constant for $\theta < t \le
  \pi$.  Since the Levine-Tristram signature is additive under
  connected sum, i.e., $\sigma_{K\# J}(\omega) = \sigma_{K}(\omega) +
  \sigma_{J}(\omega)$, our conclusion follows immediate by applying
  the above statement twice.
\end{proof}

\begin{proof}[Proof of Lemma~\ref{lemma:knots-with-independent-signatures}]
  We use the following notations: for $\omega\in S^1$,
  $N_\epsilon(\omega)$ denotes the $\epsilon$-neighborhood of $\omega$
  in $S^1$ (with respect to the arc length metric) and
  $\arg(\omega)\in[0,2\pi)$ denotes the argument of~$\omega$.

  Suppose $\theta_0$ and $\theta_1$ satisfying
  $0<\theta_1<\theta_0<\pi/2$ are given.  Let $\omega_0$ and
  $\omega_1$ be points in $S^1$ such that $\theta_i=\arg(\omega_i)$.
  We claim that there is a knot $K$ with the following properties:
  \begin{enumerate}
  \item $\sigma_K(\omega_1) \ne 0$.
  \item $\sigma_K(\omega)=0$ whenever $\omega$, $-\omega$,
    $\bar\omega$, and $-\bar\omega$ are not in~$I$, where
    \[
    I=\Big\{\omega \,\Big|\, \frac{\theta_1}{3} < \arg(\omega) <
    \theta_0\Big\}.
    \]
  \item $\int_{S^1} \sigma_{K}(\omega) \, d\omega = 0$.
  \end{enumerate}
  To prove the claim, choose $\epsilon>0$ such that
  $\epsilon<\min\{\theta_0-\theta_1,\theta_1/3\}$.  By
  Lemma~\ref{lemma:knot-signature-bump}, there is a knot $J$ such that
  $\sigma_J(\omega_1)>0$ and $\sigma_J(\omega)=0$ for $\omega\in
  S^1-[N_\epsilon(\omega_1)\cup N_\epsilon(\bar\omega_1)]$.  Let $J'$
  be the $(2,1)$-cable of~$J$.  Then by
  Lemma~\ref{lemma:reparametrization-formula},
  $\sigma_{J'}(\omega)=\sigma_J(\omega^2)$.  It follows that
  $\sigma_{J'}(\omega) =0$ for
  \[
  \omega\in S^1 - \big[ N_{\epsilon/2}(\sqrt{\omega_1})\cup
  N_{\epsilon/2}(-\sqrt{\omega_1})\cup
  N_{\epsilon/2}(\overline{\sqrt{\omega_1}})\cup
  N_{\epsilon/2}(-\overline{\sqrt{\omega_1}})\big]
  \]
  where $\sqrt{\omega_1}=e^{\theta_1\sqrt{-1}/2}$.  Let $K=J\# -J'$.
  Then, since $\epsilon/2 < \theta_1/2$, $\sigma_{J'}(\omega_1)=0$ and
  so $\sigma_K(\omega_1)=\sigma_J(\omega_1)>0$.  From this (1) follows.
  Since
  \[
  \theta_1/3 < \theta_1/2-\epsilon/2 < \theta_1+\epsilon < \theta_0,
  \]
  both $N_\epsilon(\omega_1)$ and $N_{\epsilon/2}(\sqrt{\omega_1})$
  are contained in~$I$.  From this (2) follows.  Since $\int_{S^1}
  \sigma_{J'}(\omega) \, d\omega = \int_{S^1} \sigma_{J}(\omega) \,
  d\omega$, (3) follows.

  Now, choose powers $d_1,d_2,\ldots$ of $p$ such that $d_1\ge 4$ and
  $d_{i+1}>3d_i$.  For each~$i$, applying the above claim to
  $\theta_0=2\pi/3d_{i-1}$ and $\theta_1=2\pi/d_i$, choose inductively
  a knot $K_i$ which satisfies the above (1), (2), and~(3).  Replacing
  $K_i$ by $K_i \# K_i$ if necessary, we may assume that $K_i$ has
  vanishing Arf invariant.  Then it can be checked easily that
  $\{d_i\}$ and $\{K_i\}$ satisfy
  Lemma~\ref{lemma:knots-with-independent-signatures}.
%
%
\end{proof}

\subsection{Construction of an infection curve and an associated
  $p$-structure}

In this subsection we will prove
Lemma~\ref{lemma:infection-curve-and-tower}.  Observe that we may
assume that $m=2$, i.e., $X=S^1\vee S^1$; once this special case is
proved, the general case of $m>2$ follows easily by attaching $m-2$
additional circles.

We begin with a description of a loop $\alpha$ in $X$ (which is
determined by~$n$).  Define loops $\alpha_k$ and $\beta_k$ in $X$
inductively as follows: let $\alpha_0=x_0$ and $\beta_0=x_1$, i.e.,
the paths representing the two (oriented) 1-cells of $X$, and
\[
\alpha_{k+1}=(\alpha_k,\beta_k)=\alpha_k^{\vphantom{-1}}
\beta_k^{\vphantom{-1}} \alpha_k^{-1} \beta_k^{-1}, \quad \beta_{k+1}
= \alpha_k^{\vphantom{-1}}
(\alpha_k^{\vphantom{-1}},\beta_k^{\vphantom{-1}}) \alpha_k^{-1}.
\]
Note that $\alpha_n, \beta_n\in \pi_1(X)^{(n)}$.  Let $\alpha=\alpha_n$.

Next, we construct a $p$-tower $\{X_k\}$ of height $n$ and investigate
the behaviour of the lifts $\tilde\alpha_j$ of $\alpha$ in $X_n$,
similarly to \cite[Section~7.1]{Cha:2007-1}.  Fix a power $q>2$ of
$p$, and let $\Gamma=\Z_q\oplus \Z_q$.  Let $X_0=X$, and let $c_0$,
$d_0$ be the two (oriented) 1-cells of $X_0$ corresponding to $\alpha_0$
and $\beta_0$, respectively.

Suppose a cover $X_k$ of $X$ has been defined and two 1-cells $c_k$
and $d_k$ of $X_k$ has been chosen.  Define a map $X_k\to K(\Gamma,1)$
by sending all $0$-cells and $1$-cells of $X_k$ except $c_k$, $d_k$ to
a basepoint $*\in K(\Gamma,1)$ and sending $c_k$, $d_k$ to loops
representing $(1,0)$, $(0,1) \in \Gamma$, respectively, and let
$\phi_k\colon \pi_1(X_k) \to \Gamma$ be the map induced by this.  We
define $X_{k+1}$ to be the regular cover of $X_k$ determined
by~$\phi_k$.

$X_{k+1}$~can also be described by a cut-paste construction: let $Y_k$
be $X_k$ with $c_k$ and $d_k$ removed, and let $Y(g)$ be a copy of $Y$
for $g\in \Gamma$.  Then
\[
X_{k+1} = \bigg(\bigcup_{g\in\Gamma} Y_k(g)\bigg) \cup \{ \text{1-cells
  $c_k(g)$, $d_k(g)$} \}_{g\in\Gamma}
\]
where $c_k(g)$ goes from (starting point of $c_k) \in Y_k(g)=Y$
to (endpoint of $c_k) \in Y_k(g+(1,0)) = Y_k$, and $d_k(g)$
goes from (starting point of $d_k) \in Y_k(g)=Y$ to (endpoint of $d_k)
\in Y_k(g+(0,1)) = Y_k$.  See Figure~\ref{figure:cover-construction}.
Define $c_{k+1}=c_k(0,0)$ and $d_{k+1}=-c_k(1,1)$, i.e.,
$c_k(1,1)$ with reversed orientation, as illustrated in
Figure~\ref{figure:cover-construction}.

\begin{figure}[ht]
  \begin{center}
    \includegraphics[scale=.8]{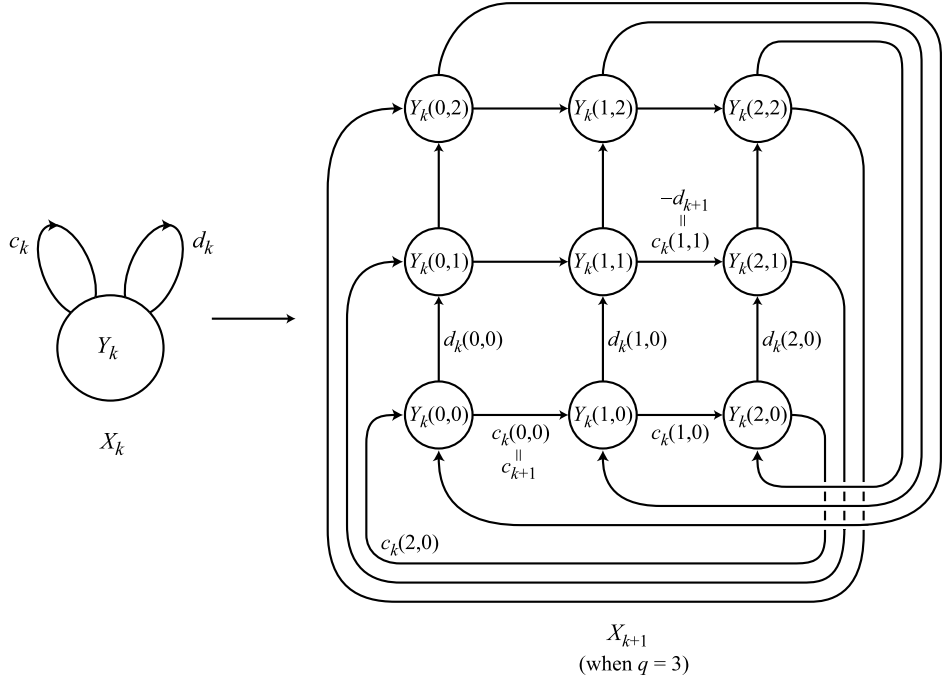}
  \end{center}
  \caption{Construction of $X_{k+1}$ from $X_k$}
  \label{figure:cover-construction}
\end{figure}

Let $\bar X_{k+1}$ be the 1-complex obtained from $X_{n+1}$ by
collapsing each $Y_n(g)$ ($g\in\Gamma$) to a point.  For a path
$\gamma$ in $X_{k+1}$, we denote the image of $\gamma$ in $\bar
X_{k+1}$ by~$\bar\gamma$.  In particular the image of the 1-cells
$c_k(g), d_k(g)$ are denoted by $\bar c_k(g), \bar d_k(g)$

We choose a basepoint $*\in X_k$ as follows: $*\in X_0 = S^1\vee S^1$ is
the wedge point, and $*\in X_{k+1}$ is defined to be the pre-image of
$*\in X_k$ contained in $Y_k(0,0) \subset X_{k+1}$.  Also, we sometimes
regard $* \in X_k$ as a point in~$Y_k(g)$.

\begin{lemma}
  \label{lemma:lift-behaviour}
  For a $0$-cell $v$ in $X_{k+1}$, let $\gamma_v$ and $\delta_v$ be
  the lift of $\alpha_{k+1}$ and $\beta_{k+1}$ in $X_{k+1}$ based at $v$,
  respectively.  Note that $v\in Y_k(g) \subset X_{k+1}$ for some $g\in\Gamma$.
  \begin{enumerate}
  \item If $v \ne *\in Y_k(g)$, then $\bar\gamma_v$ and $\bar\delta_v$
    are null-homotopic (rel $\partial$) in~$\bar X_{k+1}$.
  \item If $v=* \in Y_k(g)$, then $\bar\gamma_v$ and $\bar\delta_v$
    are homotopic (rel $\partial$) to
    \begin{gather*}
      \bar c_k(g) \bar d_k(g+(1,0)) \bar c_k(g+(0,1))^{-1}
      \bar d_k(g)^{-1} \text{ and}\\
      \bar c_k(g) \bar c_k(g+(1,0)) \bar d_k(g+(2,0)) \bar c_k(g+(1,1))^{-1}
      \bar d_k(g+(1,0))^{-1} \bar c_k(g)^{-1}
    \end{gather*}
    in $\bar X_{k+1}$, respectively.
  \end{enumerate}
\end{lemma}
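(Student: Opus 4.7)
I would prove this by induction on $k$, combining the recursive definition of $\alpha_{k+1}$ and $\beta_{k+1}$ as iterated commutators with the cut-paste construction of $X_{k+1}$. For the base case $k=0$, the complex $Y_0=\{*\}$ is a single $0$-cell, so every $0$-cell of $X_1$ has the form $v=*\in Y_0(g)$ for a unique $g\in\Gamma$; part~(1) is therefore vacuous, and part~(2) is a direct edge-by-edge trace of the lifts of $\alpha_1=c_0 d_0 c_0^{-1} d_0^{-1}$ and $\beta_1=c_0[c_0,d_0]c_0^{-1}$ in $X_1=\bar X_1$, which yields exactly the claimed $4$- and $6$-letter expressions.

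For the inductive step, the decisive fact is that $X_{k+1}\to X_k$ is the abelian $p$-cover determined by $\phi_k$, and $\phi_k$ is supported only on the two $1$-cells $c_k$ and $d_k$. Hence the projection into $\bar X_{k+1}$ of any path in $X_{k+1}$ records precisely the sequence of $c_k^{\pm 1}$ and $d_k^{\pm 1}$ crossings made by its projection to $X_k$, with the $\Gamma$-labels fixed by the running monodromy. I would therefore decompose the lift of $\alpha_{k+1}=\alpha_k\beta_k\alpha_k^{-1}\beta_k^{-1}$ based at $v$ into four consecutive segments — the lifts of $\alpha_k^{\pm 1}$ and $\beta_k^{\pm 1}$ at the running endpoints — apply the inductive hypothesis to each segment, and assemble the resulting $c_k/d_k$ contributions with the correct accumulated $\Gamma$-labels. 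The parallel decomposition $\beta_{k+1}=\alpha_k\cdot\alpha_{k+1}\cdot\alpha_k^{-1}$ into three segments yields the $6$-letter word for $\bar\delta_v$.

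The main obstacle I anticipate is setting up a sufficiently strong inductive hypothesis. The statement of the lemma only describes the image in $\bar X_{k+1}$ at the canonical basepoint, but the inductive step also requires tracking precisely where the endpoint of each segment lies inside some $Y_k(h)$, not merely in the collapsed complex. I would therefore strengthen the inductive claim to record both the $\bar X_{k+1}$-image and the precise endpoint location within each $Y_k(h)$ after each lifted segment. This strengthened claim should simultaneously give part~(1) — by showing that segments based at $v\ne *$ contribute only null-homotopic images in $\bar X_{k+1}$, equivalently that any $c_k/d_k$ crossings they produce occur in canceling pairs — and part~(2) by assembling the contributions with the correct $\Gamma$-labels via the defining equations $c_k=c_{k-1}(0,0)$ and $d_k=-c_{k-1}(1,1)$ of the cut-paste construction.
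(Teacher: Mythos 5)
Your proposal follows essentially the same route as the paper: induction on $k$ with the same base case, decomposing the lift of the commutator $\alpha_{k+1}$ (and of $\beta_{k+1}=\alpha_k\alpha_{k+1}\alpha_k^{-1}$) into lifted segments of $\alpha_k^{\pm1}$, $\beta_k^{\pm1}$, applying the inductive hypothesis to their projections to $X_k$, and reading off the $c_k/d_k$-crossings with their $\Gamma$-labels via $c_{k+1}=c_k(0,0)$ and $d_{k+1}=-c_k(1,1)$. Your strengthened inductive claim that also tracks the endpoint of each segment is a sound way of making explicit what the paper leaves implicit (in effect, that the relevant lifts of $\alpha_k,\beta_k$ are loops), so the proposal is correct and matches the paper's argument.
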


\begin{proof}
  We use an induction on~$k$. When $k=1$, the conclusion is easily
  verified.  Suppose it holds for~$k$.  Let $v' \in X_k$ be the image
  of $v$ under $X_{k+1} \to X_k$.  Let $a$ and $b$ be the lifts of
  $\alpha_k$ and $\beta_k$ in $X_k$ based at~$v'$.  Since $\alpha_{k+1} =
  \alpha_k^{\vphantom{-1}} \beta_k^{\vphantom{-1}} \alpha_k^{-1} \beta_k^{-1}$,
  $\gamma_v$~is obtained by concatenating appropriate lifts of $a$,
  $b$, $a^{-1}$, and $b^{-1}$ in~$X_{k+1}$.

  If $\bar a$ is homotopic (rel $\partial$) to a path which does not
  pass through $\bar c_k$ nor $\bar d_k$ in $\bar X_k$, then by the
  construction of $X_{k+1}$ from $X_k$, any lift of $a$ in $X_{k+1}$
  is contained in some $Y_k(-)$, and so $\bar\gamma$ is null-homotopic
  (rel $\partial$) in~$\bar X_{k+1}$.  The analogue for $b$ holds too.
  From this it follows that $\bar\gamma_v$ can be non-null-homotopic
  only if any paths homotopic to $\bar a$ or $\bar b$ pass through
  either $\bar c_k$ or~$\bar d_k$.  By our conclusion for $k$, this
  can be satisfied only when $v'=* \in X_k$.  Furthermore, if this is
  the case, it can be verified that the image in $\bar X_{k+1}$ of
  lifts of $a$, $b$ in $X_{k+1}$ are of the form $\bar c_n(-)$, $\bar
  d_n(-)$, respectively, so that $\bar\gamma_v$ is of the desired
  form.  The conclusion for $\bar\delta_v$ is proved similarly.
\end{proof}

\begin{proof}[Proof of Lemma~\ref{lemma:infection-curve-and-tower}]
  We will show that our $\alpha$ and $\{X_k\}$ satisfy the conclusion
  of Lemma~\ref{lemma:infection-curve-and-tower}.  The remaining thing
  to show is that there is $f\colon \pi_1(X_n) \to \Z$ with the
  properties described in
  Lemma~\ref{lemma:infection-curve-and-tower}~(2).  Choose a map $\bar
  X_n \to S^1$ which sends all 0-cells of $\bar X_n$ to a fixed
  basepoint in $S^1$, sends the 1-cells $\bar c_n=\bar c_n(0,0)$ and
  $\bar c_n(1,0)$ to a loop generating $\pi_1(S^1)=\Z$ and its
  inverse, respectively, and sends all other 1-cells to the basepoint.
  Let $f\colon \pi_1(X_n) \to \Z$ be the map induced by $X_n \to \bar
  X_n \to S^1$.

  Then, by using Lemma~\ref{lemma:lift-behaviour}, it is easily
  verified that $f$ sends all the lifts of $\alpha$ to either $-1$,
  $0$, or $1$; for, if the image $\bar\gamma$ of a lift of $\alpha$ in
  $\bar X_n$ is not null-homotopic, then from
  Lemma~\ref{lemma:lift-behaviour} it follows that $\bar\gamma$ should
  be of the form described in Lemma~\ref{lemma:lift-behaviour}~(2),
  which can pass through $\bar c_{n-1}(0,0)$ and $\bar c_{n-1}(1,0)$
  at most once but cannot pass through both.  Also, by
  Lemma~\ref{lemma:lift-behaviour}~(2), (the image of) the lift of
  $\alpha$ based at $*\in X_n$ passes through $\bar c_{n-1}(0,0)$
  exactly once but never passes through $\bar c_{n-1}(1,0)$.  So, this
  lift is sent to $1$ by~$f$.

  If $\gamma$ is a loop in $X_n$ which projects to a power of $\alpha_i$ in
  $X$, then it can be seen that $\bar\gamma$ in $\bar X_n$ is either
  null-homotopic or of the form $\big(\prod_i \bar
  c_n(g+(i,0))\big)^a$ or $\big(\prod_j \bar d_n(g+(0,j))\big)^a$.
  From the definition of $f$, it follows that $f$ sends $\gamma$
  to~$0$.  This shows the local triviality claim (see
  Remark~\ref{remark:alternative-description-of-local-triviality}).
\end{proof}

\section{Independence of links}
\label{section:independence-of-links}

Recall that a connected sum of two links $L_1$ and $L_2$ is defined by
choosing disk basings; given a disk basing of each $L_i$, we obtain a
string link $\beta_i$ whose closure is $L_i$, and the closure of
$\beta_1\beta_2$ is defined to be a connected sum of $L_1$ and $L_2$.
In this section we investigate ``independence'' of links under
connected sum.

For this purpose, we need a result due to Habegger and
Lin~\cite{Habegger-Lin:1998-1}.  Following
\cite{Habegger-Lin:1990-1,Habegger-Lin:1998-1}, we define a left
action $\Sigma\cdot\colon \beta \to \Sigma\beta$ of a $2m$-string link
$\Sigma$ on an $m$-string link $\beta$ as in
Figure~\ref{figure:habegger-lin-action}.  The right action $\cdot
\Sigma\colon \beta \to \beta\Sigma$ is defined similarly.  Denote
$\mathcal{S}=\{\Sigma \mid 1_m\Sigma=1_m\}$ where $1_m$ denotes the
trivial $m$-string link.

\begin{figure}[ht]
  \begin{center}
    \includegraphics[scale=.9]{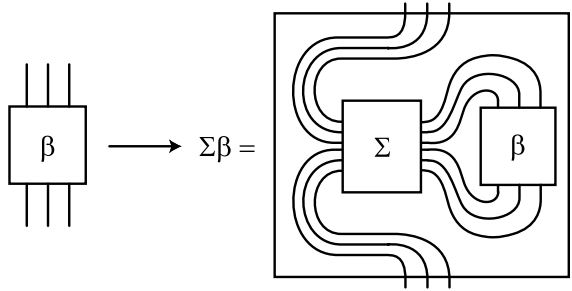}
    \caption{Left action of $\Sigma$ on $\beta$}
    \label{figure:habegger-lin-action}
  \end{center}
\end{figure}

\begin{proposition}[\cite{Habegger-Lin:1998-1}]
  \label{proposition:habegger-lin-action}
  The closures of two $m$-string links $\beta_1$ and $\beta_2$ are
  concordant (as links) if and only if $\beta_2$ is concordant to
  $\Sigma\beta_1$ (as string links) for some $2m$-string link $\Sigma
  \in \mathcal{S}$.
\end{proposition}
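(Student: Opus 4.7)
The plan is to establish both directions using the dictionary between string link presentations and disk basings: given a link $L$, a disk basing (an embedded disk $D\subset S^3$ meeting $L$ transversely in prescribed points) determines a string link $\beta$ with $\hat\beta = L$, and any two basings of the same link are related by an isotopy that manifests itself at the string link level as an action of a suitable $2m$-string link. First I would prove the easy direction: if $\beta_2$ is string-link-concordant to $\Sigma\beta_1$ for some $\Sigma\in\mathcal{S}$, then taking closures gives a link concordance $\hat\beta_2\simeq\widehat{\Sigma\beta_1}$, and it is enough to exhibit a link concordance $\widehat{\Sigma\beta_1}\simeq\hat\beta_1$. This last concordance is constructed geometrically by viewing the left action of $\Sigma$ on $\beta_1$ as stacking $\Sigma$ above $\beta_1$ and then using the defining property $1_m\Sigma = 1_m$ to trace out a concordance in $S^3\times[0,1]$: the trivial strands in $1_m\Sigma$ bound a collection of disjoint disks realising the required cobordism, which closes up to a link concordance between $\widehat{\Sigma\beta_1}$ and $\hat\beta_1$.

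For the converse, I would start with a link concordance $C\subset S^3\times[0,1]$ between $\hat\beta_1$ and $\hat\beta_2$ and the two basing disks $D_1\subset S^3\times\{0\}$, $D_2\subset S^3\times\{1\}$ that produce $\beta_1$ and $\beta_2$. The key step is to find a properly embedded 3-dimensional cobordism $\Delta\subset S^3\times[0,1]$ between $D_1$ and $D_2$, in general position with respect to $C$, so that $\Delta\cap C$ is a collection of arcs and cutting along $\Delta$ turns $C$ into a string link concordance. After cutting $S^3\times[0,1]$ open along (a thickening of) $\Delta$, the complement has the homotopy type of $D^2\times[0,1]\times[0,1]$, and $C$ meets this cut-open piece in a properly embedded cobordism between $\beta_1$ and $\beta_2$ with some additional horizontal arcs; the deviation from a genuine string link concordance is precisely recorded by a $2m$-string link $\Sigma$ read off at one end.

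The main obstacle will be verifying that the $\Sigma$ extracted this way actually lies in $\mathcal{S}$, i.e.\ that $1_m\Sigma = 1_m$. The point is that $\Sigma$ arises as an intersection pattern of $C$ with the interpolating 3-manifold $\Delta$, and when one performs the same cut-and-paste construction on the \emph{trivial} concordance of the trivial link (filling the basing disks by product disks in $S^3\times[0,1]$), the resulting string link must be trivial; this identifies the resulting $\Sigma$ as an element of the stabiliser of $1_m$. The remaining cleanup is to verify that, up to string link concordance, the outcome of the cut is $\Sigma\beta_1$ rather than $\beta_1\Sigma$ or some other variant, which is a matter of orienting $\Delta$ and tracking which end of $C$ the extra strands attach to; this follows the conventions of Habegger--Lin \cite{Habegger-Lin:1990-1,Habegger-Lin:1998-1} and in particular uses their normal form for links and the fact that the action of $\mathcal{S}$ exhausts all basing changes up to concordance.
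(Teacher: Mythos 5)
This proposition is not proved in the paper at all: it is imported wholesale from Habegger--Lin \cite{Habegger-Lin:1998-1}, so your sketch has to stand on its own, and in the ``only if'' direction it does not. The step where you ``find a properly embedded 3-dimensional cobordism $\Delta\subset S^3\times[0,1]$ between $D_1$ and $D_2$, in general position with respect to $C$, so that $\Delta\cap C$ is a collection of arcs and cutting along $\Delta$ turns $C$ into a string link concordance'' is precisely where the theorem lives, and general position does not deliver it: a generic interpolating $3$-chain meets the concordance annuli in circles as well as arcs, and in arcs with both endpoints on the same boundary level, and there is no a priori way to eliminate these. Indeed, if you could arrange $\Delta\cap C$ to consist only of the $m$ ``vertical'' arcs continuing the basing points, cutting would show $\beta_1$ and $\beta_2$ are concordant as string links, which is false in general (e.g.\ conjugating by a pure braid changes the string link but not its closure); so the extra intersections are unavoidable, and controlling them --- showing the defect is realized by the Habegger--Lin \emph{left action of a single $2m$-string link} on $\beta_1$, read off at one end --- is exactly the content you are trying to prove. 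Your argument that the extracted $\Sigma$ lies in $\mathcal{S}$ is moreover a non sequitur: the fact that the same cut-and-paste recipe applied to the product concordance of a trivial link yields a trivial string link says nothing about the $\Sigma$ produced from the given pair $(C,\Delta)$, since $\Sigma$ depends on that intersection data. Habegger--Lin's actual proof establishes the bijection between concordance classes of links and the orbits of the action of $\mathcal{S}$ by a structural analysis of the actions and of based concordances, not by a direct general-position cut along a $3$-manifold.

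The ``if'' direction is essentially right but needs repair as stated. Membership in $\mathcal{S}$ means $1_m\Sigma=1_m$ as \emph{concordance classes}, so the strands of $1_m\Sigma$ bound concordance annuli, not disks, and ``the trivial strands in $1_m\Sigma$ bound disjoint disks'' is not the correct mechanism. The clean argument is the isotopy obtained by regrouping the closure: closing up the free strands of $\Sigma\beta_1$ closes all $2m$ strands of $\Sigma$, the left $m$ by plain closure arcs and the right $m$ through $\beta_1$, which exhibits $\widehat{\Sigma\beta_1}$ as the closure of the composite $m$-string link $(1_m\Sigma)\,\beta_1$ (with the conventions of Figure~\ref{figure:habegger-lin-action}). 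Then $1_m\Sigma$ concordant to $1_m$ gives a string link concordance $(1_m\Sigma)\,\beta_1\sim\beta_1$, hence a link concordance of closures, and combining with $\beta_2\sim\Sigma\beta_1$ finishes that direction.
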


Throughout this section, we assume $p=2$.  Fix $m>1$ and $n$, and
consider the string links $\beta(K_i)$ constructed in
Section~\ref{section:examples} (using $p=2$).  Namely, $K_i$ is as in
Lemma~\ref{lemma:knots-with-independent-signatures} and $\beta(K_i)$
is obtained from the trivial string link by infection using $K_i$
along a curve realizing the homotopy class of the loop given by
Lemma~\ref{lemma:infection-curve-and-tower}.  Let $L_i$ be the closure
of $\beta(K_i)$.

\begin{theorem}
  \label{theorem:independence-of-links-over-Z}
  Suppose that a connected sum of the $a_iL_i$ and some local knots is
  $\Z_{(p)}$-coefficient $(n.5)$-solvable for some integers $a_i$,
  some disk basings, and some order of the~$a_iL_i$.  Then $a_i=0$ for
  all~$i$.
\end{theorem}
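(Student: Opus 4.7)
The plan is to reduce the link-level independence to the string-link case already established in Theorem~\ref{theorem:Z-independence-of-string-links}; the new ingredient is that a connected sum depends on disk basings, so $\lambda_\T$ must be shown to be insensitive to that choice. Suppose for contradiction that some connected sum of the $a_i L_i$ with some local knots is $\Z_{(p)}$-coefficient $(n.5)$-solvable, while some $a_i\neq 0$. Fixing the chosen disk basings and ordering realizes this connected sum as the closure of a string link $\gamma=\prod_k \delta_k\cdot(\text{local knots})$, where each $\delta_k$ has closure $L_{i_k}^{\epsilon_k}$ with $\epsilon_k=\pm 1$ according to the sign of $a_{i_k}$, and $a_i=\sum_{k:i_k=i}\epsilon_k$. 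The closure of $\gamma$ being $(n.5)$-solvable makes $\gamma$ itself a $(n.5)$-solvable string link. By Proposition~\ref{proposition:habegger-lin-action}, $\delta_k$ is concordant to $\Sigma_k\,\beta(K_{i_k})^{\epsilon_k}$ for some $\Sigma_k\in\mathcal{S}$; since being an $\hF$-link is a concordance invariant, $\delta_k\in\hCSL$, so $\gamma\in\hFSL_{(n.5)}$.

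Let $i_0$ be minimal with $a_{i_0}\neq 0$ and use the locally trivial $p$-structure $\T=(\{X_k\},\theta_{d_{i_0}})$ of height $n$ from the proof of Theorem~\ref{theorem:Z-independence-of-string-links}. Corollary~\ref{corollary:n.5-solvability-obstruction-for-string-links} gives $\lambda_\T(\gamma)=0$, and combining additivity (Theorem~\ref{theorem:additivity-of-lambda_T}) with the vanishing on local knots (Theorem~\ref{theorem:vanishing-for-local-knots}) reduces this to
\[
0=\sum_k\epsilon_k\,\lambda_\T\big(\Sigma_k\,\beta(K_{i_k})\big).
\]

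The main obstacle is the \emph{basing invariance lemma}: for any $\hF$-string link $\beta$, any $\Sigma\in\mathcal{S}$, and any locally trivial $\T$, one has $\lambda_\T(\Sigma\beta)=\lambda_\T(\beta)$. The geometric point is that acting by $\Sigma\in\mathcal{S}$ does not change the closure of $\beta$ up to concordance, so it changes only the preferred meridian map $X\to M_\beta$; Habegger--Lin's description of $\mathcal{S}$ shows that this map is altered by sending each generator $x_i$ to a conjugate $w_i x_i w_i^{-1}$, with each $w_i$ lying in the normal closure of the meridians. Lifting this modification through the tower $\{X_k\}$ via Lemma~\ref{lemma:local-triviality-under-pullback}, local triviality of $\T$ forces $\theta$ to annihilate every such conjugating element (indeed, every element of $\pi_1(X_n)$ whose projection to $\pi_1(X)$ is a conjugate of a power of some $x_i$). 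Consequently the two pullback $p$-structures on $M_\beta$ induce the same character at the top of the tower, so the invariants agree.

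Granted the lemma, each $\lambda_\T(\Sigma_k\beta(K_{i_k}))=\lambda_\T(\beta(K_{i_k}))$, so the displayed identity becomes $\sum_i a_i\,\lambda_\T(\beta(K_i))=0$. The signature computation in the proof of Theorem~\ref{theorem:Z-independence-of-string-links}, combined with parts (1)--(2) of Lemma~\ref{lemma:knots-with-independent-signatures}, yields $a_{i_0}\cdot c\cdot\sigma_{K_{i_0}}(\zeta_{d_{i_0}})=0$ for the positive integer $c$ produced there, and since $\sigma_{K_{i_0}}(\zeta_{d_{i_0}})>0$ we conclude $a_{i_0}=0$, contradicting minimality. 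The hard part, by a comfortable margin, is the basing invariance lemma; every other ingredient is already in place from earlier sections.
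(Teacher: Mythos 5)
Your reduction to string links via Proposition~\ref{proposition:habegger-lin-action}, the use of Corollary~\ref{corollary:n.5-solvability-obstruction-for-string-links}, additivity, and Theorem~\ref{theorem:vanishing-for-local-knots} all match the paper, but the entire argument rests on your ``basing invariance lemma'' ($\lambda_\T(\Sigma\beta)=\lambda_\T(\beta)$ for every $\Sigma\in\mathcal{S}$ and every locally trivial $\T$), and that lemma is neither proved by your sketch nor available in the paper. The sketch fails at the point where you invoke local triviality: changing the basing replaces the meridian map by one sending $x_i$ to $w_i\mu(x_i)w_i^{-1}$, and since a link group is normally generated by meridians, saying the $w_i$ lie in ``the normal closure of the meridians'' carries no information. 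More importantly, local triviality of $\T$ only constrains $\theta$ on elements of $\pi_1(X_n)$ projecting to conjugates of powers of the $x_i$; it says nothing about the conjugators $w_i$, which live in $\pi_1(M_\beta)$, nor about how they act on the higher levels of the tower. The two pushforward $p$-structures on the surgery manifold agree at the first stage (the characters there factor through $H_1$, where conjugation is invisible), but from the second stage on the induced characters genuinely differ, and the resulting values of $\lambda$ differ in general: in the paper's own computation, a factor $\Sigma_{ij}\beta(K_i)$ evaluated against the induced structure gives $\sum_k\sigma_{K_i}(\zeta_{d}^{s_k})$ with \emph{uncontrolled} exponents $s_k$, not the original value $c\,\sigma_{K_i}(\zeta_{d_i})$. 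Indeed, if your lemma held, $\lambda_\T$ (for locally trivial $\T$) would descend to a link concordance invariant and the hypotheses ``$p=2$'' and the one-sided signature condition in Lemma~\ref{lemma:knots-with-independent-signatures}~(1) would be superfluous --- a strong sign that no such invariance is available.

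The paper takes a different route precisely to avoid this. It forms the product $\beta=\prod_{i,j}\Sigma_{ij}\beta(K_i)$, fixes one factor $\beta_{i_0j_0}$, and uses the fact that the maps $\mu_{ij}\colon X\to E_{\beta(K_i)}\to E_{\beta_{ij}}\to M_\beta$ and the preferred meridian map $\mu$ are all $p$-tower maps to transport the single chosen structure $\T_{i_0j_0}$ to one structure $\T'$ on $M_\beta$ and then to (a priori different) structures $\T_{ij}$ and $\T$ on $X$. It then only controls the \emph{sign} of each contribution: for $i=i_0$, $j\ne j_0$ each term $\lambda_{\T_{i_0j}}(\beta(K_{i_0}))=\sum_k\sigma_{K_{i_0}}(\zeta_{d_{i_0}}^{s_k})$ is nonnegative by Lemma~\ref{lemma:knots-with-independent-signatures}~(1) (this is where $p=2$ enters), terms with $i>i_0$ vanish by part (2), and terms with $i<i_0$ are absent by minimality, so $\lambda_\T(\beta)\ne 0$, contradicting $(n.5)$-solvability; local knots are handled by showing, via Lemma~\ref{lemma:local-triviality-under-pullback}, that the induced $\T$ is locally trivial. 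To repair your write-up you would either have to prove the basing invariance lemma (which would be a substantially stronger statement than anything established in the paper) or replace exact equality by this sign-control argument.
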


As mentioned in Section~\ref{section:examples}, each $\beta(K_i)$ is
(integrally) $(n)$-solvable and of vanishing Harvey's
$\rho_n$-invariant, and thus so is~$L_i$.  Therefore, from
Theorem~\ref{theorem:independence-of-links-over-Z},
Theorem~\ref{theorem:independent-links-over-Z} follows immediately:
the links $L_i$ are $(n)$-solvable, has vanishing $\rho_n$-invariants,
and independent over $\Z$ modulo $\FL_{(n.5)}$ and local knots.

\begin{proof}
  We may assume that $a_i \ge 0$ by replacing $L_i$ by $-L_i$ (and
  $K_i$ by $-K_i$) if necessary.  First we consider the special case
  that there is no local knot summand; suppose $\beta_{ij}'$ ($1\le j
  \le a_i$) is a string link with closure $L_i$ and the product
  $\beta'$ of the $\beta_{ij}'$ (in some order) is
  $\Z_{(p)}$-coefficient $(n.5)$-solvable.  By
  Proposition~\ref{proposition:habegger-lin-action}, there is
  $\Sigma_{ij} \in \mathcal{S}$ such that $\beta_{ij}'$ is concordant
  to $\Sigma_{ij} \beta(K_i)$.  Let $\beta_{ij}=\Sigma_{ij}
  \beta(K_i)$.  Then the product
  \[
  \beta = \prod_{i,j} \beta_{ij}
  \]
  is $\Z_{(p)}$-coefficient $(n.5)$-solvable (for some order of the
  factors) since $\beta$ is concordant to~$\beta'$.

  Suppose not all $a_i$ are zero.  Choose the minimal $i_0$ such that
  $a_{i_0}\ne 0$.  We will derive a contradiction by showing that
  $a_{i_0}$ should be zero.  For this purpose, we need the following
  facts: let $\{X_k\}$ be the $p$-tower constructed in
  Lemma~\ref{lemma:infection-curve-and-tower}.  Then

  \begin{enumerate}
  \item For each $i$, there is a character $\theta_i\colon\pi_1(X_n)
    \to \Z_{d_i}$ such that the $p$-structure
    $\T_i=(\{X_k\},\theta_i)$ is locally trivial and satisfies
    $\lambda_{\T_i}(\beta(K_i))=c\cdot \sigma_{K_i}(\zeta_{d_i})$ for
    some constant $c>0$.
  \item For any $p$-structure $\T=(\{X_k\},\theta)$ of height $n$ with
    $\theta\colon \pi_1(X_n) \to \Z_d$, we have
    $\lambda_{\T}(\beta(K)) = \sum_k \sigma_{K}(\zeta_{d}^{s_k})$ for
    some $\{s_k\}$.
  \end{enumerate}

  (1) was proved in the proof of
  Theorem~\ref{theorem:Z-independence-of-string-links} using
  Lemma~\ref{lemma:infection-curve-and-tower}.  Observing that our
  infection curve $\alpha$ producing $\beta(K_i)$ lifts to the $n$th
  term of the $p$-tower, (2)~follows immediately from
  Proposition~\ref{proposition:lambda-of-infected-manifold}, as in the
  proof of Theorem~\ref{theorem:Z-independence-of-string-links}.

  Among the factors $\beta_{i_0 j}$ ($1\le j\le a_{i_0}$), choose an
  arbitrary one, say~$\beta_{i_0 j_0}$.  Appealing to (1) above,
  choose a $p$-structure $\T_{i_0 j_0}$ of height $n$ such that
  $\lambda_{\T_{i_0 j_0}}(\beta(K_{i_0}))=c\cdot
  \sigma_{K_{i_0}}(\zeta_{d_{i_0}})$.

  Let $\mu_{ij}$ be the composition of the preferred meridian map $X
  \to E_{\beta(K_i)}$ for $\beta(K_i)$ and the natural inclusion
  \[
  E_{\beta(K_i)} \to E_{\beta_{ij}} \to E_\beta \to M_\beta
  \]
  and $\mu\colon X\to M_\beta$ be the preferred meridian map
  for~$\beta$.  Then, since $\beta$ is an $\hF$-string link, $\mu$ and
  the $\mu_{ij}$ are $p$-tower maps by~\cite[Proposition
  6.3]{Cha:2007-1}.  So the $p$-structure $\T_{i_0j_0}$ determines a
  $p$-structure $\T'$ for $M_\beta$ via $\mu_{i_0j_0}$, and then $\T'$
  induces a $p$-structure $\T_{ij}$ for $X$ via $\mu_{ij}$ for each
  $i,j$.  Also, $\T'$ induces a $p$-structure $\T$ for $X$ via~$\mu$.

  From our choice of the $p$-structures, it follows that
  \[
  \lambda_\T(\beta) = \sum_{i,j} \lambda_T(\beta_{ij}) = \sum_{i,j}
  \lambda_{\T_{ij}} (\beta(K_i)).
  \]
  By the property stated in
  Lemma~\ref{lemma:knots-with-independent-signatures}~(1), we have
  \[
  \lambda_{T_{i_0 j_0}}(\beta(K_{i_0})) = c \sigma_{K_{i_0}}(\zeta_{d_{i_0}}) \ne 0.
  \]
  For $i=i_0$ and $j\ne j_0$, we have
  \[
  \lambda_{\T_{i_0 j}}(\beta(K_{i_0})) = \sum_k
  \sigma_{K_{i_0}}(\zeta_{d_{i_0}}^{s_k})
  \]
  for some $\{s_k\}$ by (2) above.  By the property stated in
  Lemma~\ref{lemma:knots-with-independent-signatures}~(1), each
  summand of $\lambda_{\T_{i_0 j}}(\beta(K_{i_0}))$ is either zero or
  of the same sign with $\lambda_{T_{i_0 j_0}}(\beta(K_{i_0}))$ (Here
  we need the assumption that $p=2$).  For $i < i_0$, there is no
  $(i,j)$-summand in the above expression of $\lambda_\T(\beta)$ since
  $a_i=0$ by our choice of~$i_0$.  For $i>i_0$, for some $\{s_k\}$ we
  have
  \[
  \lambda_{\T_{i j}}(\beta(K_{i})) = \sum_k
  \sigma_{K_{i}}(\zeta_{d_{i_0}}^{s_k}) = 0
  \]
  by the property stated in
  Lemma~\ref{lemma:knots-with-independent-signatures}~(2).

  So it follows that $\lambda_\T(\beta)\ne 0$.  This contradicts that
  $\beta$ is $\Z_{(p)}$-coefficient $(n.5)$-solvable, by
  Theorem~\ref{theorem:COT-filtration-and-lambda_T}.  It completes the
  proof when there is no local knot summand.

  For the general case, suppose that the product of $\beta$ and some
  local knots, say $\beta_k$, is $\Z_{(p)}$-coefficient
  $(n.5)$-solvable.  In this case, we need an additional argument as
  described below.  Observe that now we have
  \[
  0 = \sum_{i,j} \lambda_{\T_{ij}} (\beta(K_i)) + \sum_k
  \lambda_\T(\beta_k).
  \]
  So it suffices to show that $\lambda_\T(\beta)$ vanishes for any
  local knot $\beta$.  Let denote $\T'=(\{M_k\},\phi)$.  Since
  $\T_{i_0 j_0}=(\{X_k\},\theta_{i_0})$ is locally trivial and induced
  by $\T'$ via the meridian map $\mu_{i_0 j_0}$, any loop in $M_n$
  that projects to a power of a meridian in $M_\beta$ is in the kernel
  of~$\phi$, by Lemma~\ref{lemma:local-triviality-under-pullback} (see
  also
  Remark~\ref{remark:alternative-description-of-local-triviality}).
  Applying Lemma~\ref{lemma:local-triviality-under-pullback} to the
  preferred meridian map $\mu$, it follows that $\T$ is locally
  trivial.  Therefore $\lambda_\T(\beta)=0$ for any local knot
  $\beta$, by Theorem~\ref{theorem:vanishing-for-local-knots}.  This
  finishes the proof.
\end{proof}

The arguments in~\cite[Section~7]{Cha:2007-1} (in particular see
Lemma~7.7 of~\cite{Cha:2007-1}) prove that the $n$th iterated Bing
double $BD_n(K)$ of a knot $K$ is the closure of a string link
obtained from the trivial string link by infection along a curve
$\alpha$, which satisfies the conclusion of our
Lemma~\ref{lemma:infection-curve-and-tower}; while the local
triviality condition is not mentioned in~\cite{Cha:2007-1}, it can be
satisfied by a minor change of the construction of the character
in~\cite{Cha:2007-1}.  So, our argument shows the following:

\begin{theorem}
  Suppose $\{K_i\}$ is a family of knots satisfying
  Lemma~\ref{lemma:knots-with-independent-signatures}.  Then, for any
  $n$, the links $BD_n(K_i)$ are $(n)$-solvable, have vanishing
  $\rho_n$-invariant, and are independent over $\Z$ modulo
  $\FL_{(n.5)}$ and local knots.
\end{theorem}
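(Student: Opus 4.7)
The plan is to reduce the theorem directly to the machinery already established for $\beta(K_i)$, by recalling that iterated Bing doubles admit an infection description of the same shape as $\beta(K_i)$. More precisely, as alluded to in the paragraph immediately preceding the theorem, \cite[Lemma~7.7]{Cha:2007-1} shows that $BD_n(K)$ is the closure of a string link $\beta'(K)$ obtained from a trivial $2^n$-string link by infection using $K$ along a single curve $\alpha'$ which is null-homotopic in $D^2\times[0,1]$, lies in $\pi_1(X)^{(n)}$, and satisfies properties (1) and (2) of Lemma~\ref{lemma:infection-curve-and-tower}. Granting this input (together with the local-triviality refinement of the character construction from \cite{Cha:2007-1}, which we discuss below), the proofs of Theorem~\ref{theorem:Z-independence-of-string-links} and Theorem~\ref{theorem:independence-of-links-over-Z} apply essentially verbatim.

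First I would record the analogue of Proposition~\ref{proposition:infection-on-hat-F-link} for boundary/$\hF$-ness: since $\beta'(K)$ is obtained by infection on a trivial string link along a null-homotopic curve in $D^2\times[0,1]$, it is an $\hF$-string link, and $[\alpha']\in\pi_1(X)^{(n)}$ together with vanishing Arf invariant of $K_i$ gives $(n)$-solvability via \cite{Cochran-Orr-Teichner:1999-1}. The vanishing of $\rho_n$ for $BD_n(K_i)$ then follows from $\int_{S^1}\sigma_{K_i}(\omega)\,d\omega=0$ (Lemma~\ref{lemma:knots-with-independent-signatures}(3)) by the standard computation of $\rho_n$ for links produced by infection, exactly as in the $\beta(K_i)$ case.

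Next, to prove independence over $\Z$ modulo $\FL_{(n.5)}$ and local knots, I would apply Habegger--Lin (Proposition~\ref{proposition:habegger-lin-action}) to replace any disk-basing choice of $BD_n(K_i)$ by a string link concordant to $\Sigma_{ij}\beta'(K_i)$ for some $\Sigma_{ij}\in\mathcal{S}$, and then run the argument of Theorem~\ref{theorem:independence-of-links-over-Z} line by line: a minimal nonzero $i_0$ is chosen, a $\Z_{d_{i_0}}$-valued locally trivial $p$-structure $\T_{i_0 j_0}$ is fixed on one factor $\beta'(K_{i_0})$, pulled back to $M_\beta$ via $\mu_{i_0 j_0}$, and then pushed back to each factor through the meridian maps $\mu_{ij}$. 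Using Proposition~\ref{proposition:lambda-of-infected-manifold}, each $\lambda_{\T_{ij}}(\beta'(K_i))$ decomposes as a sum of $\omega$-signatures of $K_i$ at powers of $\zeta_{d_{i_0}}$, and the three properties of Lemma~\ref{lemma:knots-with-independent-signatures} (with $p=2$) force the $(i_0,j_0)$-term to be a positive integer multiple of $\sigma_{K_{i_0}}(\zeta_{d_{i_0}})\ne 0$, all other $i=i_0$ terms to be of the same sign (this uses $p=2$ so that $\sigma_{K_{i_0}}(\zeta_{d_{i_0}}^s)\ge 0$ for all $s$), and all $i>i_0$ terms to vanish. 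This forces $a_{i_0}=0$, a contradiction. The local-knot summands contribute zero by Theorem~\ref{theorem:vanishing-for-local-knots} together with Lemma~\ref{lemma:local-triviality-under-pullback}, exactly as in the end of the proof of Theorem~\ref{theorem:independence-of-links-over-Z}.

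The main obstacle is really the preliminary step, namely verifying that the $p$-tower and character used in \cite[Section~7]{Cha:2007-1} to realize $BD_n(K)$ as an infected string link can be chosen to be locally trivial in the sense of Section~\ref{section:local-knots}, as the theorem requires control modulo local knots. The construction in \cite{Cha:2007-1} is adapted from the same cut-and-paste scheme used in the proof of Lemma~\ref{lemma:infection-curve-and-tower}, so the needed modification amounts to defining $f\colon\pi_1(X_n)\to\Z$ through a map $\bar X_n\to S^1$ which sends only one distinguished pair of $1$-cells of $\bar X_n$ nontrivially and kills every lift of any meridian $x_i^r$ for purely combinatorial reasons; the analysis is identical to that in the proof of Lemma~\ref{lemma:infection-curve-and-tower} but carried out for the $2^n$-component meridian complex arising from the Bing double tree, and the rest of the argument is unchanged.
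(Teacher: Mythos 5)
Your proposal is correct and follows essentially the same route as the paper: the paper likewise invokes \cite[Section~7, Lemma~7.7]{Cha:2007-1} to present $BD_n(K)$ as the closure of a string link obtained from a trivial string link by infection along a curve satisfying Lemma~\ref{lemma:infection-curve-and-tower}, notes that local triviality of the associated character requires only a minor modification of the construction in \cite{Cha:2007-1}, and then reruns the arguments of Theorems~\ref{theorem:Z-independence-of-string-links} and~\ref{theorem:independence-of-links-over-Z} verbatim. You have simply spelled out the reduction (including $(n)$-solvability via the Arf condition and vanishing $\rho_n$ via the signature integral) in more detail than the paper does.
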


Using a similar technique, we can also show the independence of
certain 2-torsion iterated Bing doubles considered
in~\cite{Cha:2007-1}, namely we can show
Theorem~\ref{theorem:independent-links-over-Z2}: there are infinitely
many amphichiral knots $K_{i}$ such that for any $n$, the links
$BD_n(K_{i})$ are 2-torsion, $(n)$-solvable, and independent over
$\Z_2$ modulo $\FL_{(n+1.5)}$ and local knots.

\begin{proof}[Proof of Theorem~\ref{theorem:independent-links-over-Z2}]
  In this proof we use tools from algebraic number theory: the
  discriminant map
  \[
  \dis\colon L^0(\Q(\zeta_d)) \to
  \frac{\Q(\zeta_d^{\vphantom{-1}}+\zeta_d^{-1})^\times}{\{z\bar z
    \mid z\in \Q(\zeta_d)^\times\}}
  \]
  and the norm residue symbol $(x,y)_q \in \{\pm 1\}$ which is defined
  for $x,y\in \Q(\zeta_d^{\vphantom{-1}}+\zeta_d^{-1})^\times$ and for
  a prime~$q$ of $\Q(\zeta_d^{\vphantom{-1}}+\zeta_d^{-1})$.  (Here
  $\{\pm 1\}$ is regarded as a multiplicative group of order two.)
  Essentially what we need is the following fact: $\dis$ is a group
  homomorphism, and when $d=4$ (i.e., $\zeta_d=\sqrt{-1}$),
  $(x,-1)_q=1$ for all prime $q$ if $x\in \Q^\times$ is of the form
  $z\bar z$ for some $z\in \Q(\sqrt{-1})^\times$.  Interested readers
  who are not familiar with these tools are referred to Section~4.5 of
  \cite{Cha:2007-1} and Section 3.4 of~\cite{Cha:2003-1}, which
  provide more detailed accounts on the discriminant and norm residue
  symbols for non-experts of algebraic number theory.
  
  We will show that the family $\{K_i\}$ of knots constructed in the
  proof of Corollary~8.5~(1) in~\cite{Cha:2007-1} satisfies our
  conclusion.  It was shown that $BD_n(K_i)$ is 2-torsion and
  $(n)$-solvable in~\cite{Cha:2007-1}.  The properties stated below,
  which are shown by the arguments of the proof of Corollary~8.5
  of~\cite{Cha:2007-1} (see also Proposition~5.6
  of~\cite{Cha:2007-1}), are the only facts on the $K_i$ that we need
  in order to prove the independence: $BD_n(K_i)$ is the closure of a
  string link $\beta_i$ which admits a ``dual prime'' $p_i$ satisfying
  the following:
  \begin{enumerate}
  \item For each $i$, there is a locally trivial $\Z_4$-valued
    $2$-structure $\T_i$ of height $n+1$ such that
    \[
    \big( \dis\lambda_{\T_i}(\beta_i),-1 \big)_{p_i} = -1.
    \]
  \item If $i\ne j$, then for any $\Z_4$-valued $2$-structure $\T$ of
    height $n+1$,
    \[
    \big( \dis\lambda_{\T}(\beta_j),-1 \big)_{p_i} = +1.
    \]
  \end{enumerate}
  (Note that $\lambda_{\T}(-)$ and $\lambda_{\T_i}(-)$ are always in
  $L^0(\Q(\sqrt{-1}))$ so that the discriminant is in $\Q$, since the
  $p$-structures are $\Z_4$-valued.)

  Now suppose that for some finite nonempty subset $I$ of
  $\mathbb{N}$, a connected sum of $\{L_{i}\}_{i\in I}$ and some local
  knots is $\Z_{(p)}$-coefficient $(n+1.5)$-solvable.  Choose any $i_0
  \in I$.  Then, by appealing to Habegger-Lin's
  Proposition~\ref{proposition:habegger-lin-action} and by choosing
  appropriate locally trivial $p$-structures via meridian maps as in
  the proof of Theorem~\ref{theorem:independence-of-links-over-Z}, we
  can see that there are $p$-structures $\T_i$ such that 
  \[
  0 = \sum_{i\in I} \lambda_{\T_{i}}(\beta_{i})
  \]
  where $\T_{i_0}$ satisfies $(\dis(\lambda_{T_{i_0}}(\beta_{i_0}),-1
  \big)_{p_{i_0}}=-1$ as in~(1).  Taking the discriminant and
  evaluating the norm residue symbol $(\,\cdot\,, -1)_{p_{i_0}}$, we
  have
  \[
  1=\prod_{i\in I} \big(\dis(\lambda_{T_i}(\beta_i),-1
  \big)_{p_{i_0}}.
  \]
  But, by (2) we have $(\dis(\lambda_{T_i}(\beta_i),-1
  \big)_{p_{i_0}}=1$ for $i\ne i_0$.  This is a contradiction.
\end{proof}

\bibliographystyle{amsplainabbrv}
\renewcommand{\MR}[1]{}

\bibliography{research}

\end{document}